\documentclass[reqno]{amsart}
\usepackage{amssymb}
\usepackage{amsfonts}
\usepackage{amsthm}
\usepackage{amsmath}
\usepackage{hyperref}

%
\newcommand{\N}{{\mathbb{N}}}
\newcommand{\Z}{{\mathbb{Z}}}

\newcommand{\R}{{\mathbb{R}}}

\newcommand{\near}{{\text{\rm{}near}}}
\newcommand{\far}{{\text{\rm{}far}}}
\newcommand{\dod}{{\text{\rm{}DoD}}}
\newcommand{\ntx}{{\nabla_{\!t,x}}}

%

%
\let\Re=\undefined\DeclareMathOperator*{\Re}{Re}

\newcommand{\eps}{{\varepsilon}}

\DeclareMathOperator*{\supp}{supp}  \DeclareMathOperator*{\dist}{dist}

\newcommand{\sqrtDelta}{|\nabla|}  

\newcommand{\qtq}[1]{\quad\text{#1}\quad}

\newcommand{\attn}[1]{\marginpar{\footnotesize\raggedright\cite{ATTN}\kern -2.2ex\vrule width 2ex depth 0.5ex height 2ex\relax\,#1}}

\newtheorem{theorem}{Theorem}[section]

\newtheorem{lemma}[theorem]{Lemma}

\newtheorem{corollary}[theorem]{Corollary}

\newtheorem{proposition}[theorem]{Proposition}
\theoremstyle{definition}
\newtheorem{definition}[theorem]{Definition}

\theoremstyle{remark}

\newtheorem*{remarks}{Remarks}

%
%
\newcounter{smalllist}

%
%
\newenvironment{CI}{\begin{list}{{\ $\bullet$\ }}{%
\setlength{\topsep}{0mm}\setlength{\parsep}{0mm}\setlength{\itemsep}{0mm}%
\setlength{\labelwidth}{0mm}\setlength{\itemindent}{0mm}\setlength{\leftmargin}{0mm}%
\setlength{\labelsep}{0mm} }}{\end{list}}

\numberwithin{equation}{section} \allowdisplaybreaks
\voffset=-0.5in \setlength{\textheight}{8.6in} \hoffset=-0.2in \setlength{\textwidth}{5.5in}



\begin{document}

\title[The defocusing energy-supercritical NLW in three space dimensions]{The defocusing energy-supercritical nonlinear wave equation
in three space dimensions}
\author{Rowan Killip}
\address{University of California, Los Angeles}
\author{Monica Visan}
\address{University of California, Los Angeles}

\begin{abstract}
We consider the defocusing nonlinear wave equation $u_{tt}-\Delta u + |u|^p u=0$ in the energy-supercritical regime
$p>4$.  For even values of the power $p$, we show that blowup (or failure to scatter) must be accompanied by blowup of
the critical Sobolev norm.  An equivalent formulation is that solutions with bounded critical Sobolev norm are global
and scatter.  The impetus to consider this problem comes from recent work of Kenig and Merle who treated the case of
spherically-symmetric solutions.
\end{abstract}

\date{\today}

\maketitle

\tableofcontents

%
%
%
%

\section{Introduction}

We consider the initial value problem for the defocusing nonlinear wave equation in three space dimensions:
\begin{equation}\label{nlw}
\begin{cases}
\ u_{tt} - \Delta u + F(u) = 0\\
\ u(0)=u_0,  \ u_t(0)=u_1,
\end{cases}
\end{equation}
where the nonlinearity $F(u)=|u|^p u$ is energy-supercritical, that is, $p>4$.  For the sake of simplicity, we restrict attention
to even values of the power $p$ only.

The class of solutions to \eqref{nlw} is left invariant by the scaling
\begin{equation}\label{scaling}
u(t,x)\mapsto \lambda^\frac2{p} u(\lambda t,\lambda x).
\end{equation}
This defines a notion of \emph{criticality}.  More precisely, a quick computation shows that the only homogeneous
$L_x^2$-based Sobolev norm left invariant by the scaling is $\dot H^{s_c}_x\times\dot H^{s_c-1}_x$, where the
\emph{critical regularity} is $s_c:=\frac 32-\frac 2p$. If the regularity of the initial data to \eqref{nlw} is
higher/lower than the critical regularity $s_c$, we call the problem \emph{subcritical/supercritical}.

We consider \eqref{nlw} for initial data belonging to the critical homogeneous Sobolev space, that is, $(u_0, u_1)\in
\dot H^{s_c}_x\times\dot H^{s_c-1}_x$ in the energy-supercritical regime $s_c>1$. We prove that any maximal-lifespan
solution $u$ with the property that $(u,u_t)$ is uniformly bounded (throughout its lifespan) in $\dot
H^{s_c}_x\times\dot H^{s_c-1}_x$ must be global and scatter.

Let us start by making the notion of a solution more precise.

\begin{definition}[Solution]\label{D:solution}
A function $u: I \times \R^3 \to \R$ on a non-empty time interval $0\in I \subset \R$ is a \emph{(strong) solution} to
\eqref{nlw} if $(u,u_t)\in C^0_t (K ;\dot H^{s_c}_x\times\dot H^{s_c-1}_x)$ and $u\in L_{t,x}^{2p}(K \times \R^3)$ for
all compact $K \subset I$, and obeys the Duhamel formula
\begin{equation}\label{old duhamel}
\begin{aligned}
\begin{bmatrix} u(t)\\[1ex] u_t(t)\end{bmatrix}
&= \begin{bmatrix} \cos(t\sqrtDelta) &  \sqrtDelta^{-1} \sin(t\sqrtDelta) \\[1ex]
            -\sqrtDelta\sin(t\sqrtDelta) & \cos(t\sqrtDelta) \end{bmatrix}
                \begin{bmatrix} u(0) \\[1ex] u_t(0)\end{bmatrix} \\
&\qquad - \int_{0}^{t} \begin{bmatrix} |\nabla|^{-1}\sin\bigl((t-s)\sqrtDelta\bigr) \\[1ex]
        \cos\bigl((t-s)\sqrtDelta\bigr) \end{bmatrix} F(u(s))\,ds
\end{aligned}
\end{equation}
for all $t \in I$.  We refer to the interval $I$ as the \emph{lifespan} of $u$. We say that $u$ is a \emph{maximal-lifespan
solution} if the solution cannot be extended to any strictly larger interval. We say that $u$ is a \emph{global solution} if $I=\R$.
\end{definition}

We define the \emph{scattering size} of a solution to \eqref{nlw} on a time interval $I$ by
\begin{equation}\label{E:S defn}
S_I(u):= \int_I \int_{\R^d} |u(t,x)|^{2p}\, dx \,dt.
\end{equation}

Associated to the notion of solution is a corresponding notion of blowup.  By the standard local theory (see
Theorem~\ref{T:local}), this precisely corresponds to the impossibility of continuing the solution.

\begin{definition}[Blowup]\label{D:blowup}
We say that a solution $u$ to \eqref{nlw} \emph{blows up forward in time} if there exists a time $t_1 \in I$ such that
$$ S_{[t_1, \sup I)}(u) = \infty$$
and that $u$ \emph{blows up backward in time} if there exists a time $t_1 \in I$ such that
$$ S_{(\inf I, t_1]}(u) = \infty.$$
\end{definition}

Our main result is the following

\begin{theorem}[Spacetime bounds]\label{T:main}
Suppose $p>4$ is even and let $u:I\times\R^3\to\R$ be a solution to \eqref{nlw} such that
$(u,u_t) \in L_t^\infty (I; \dot H^{s_c}_x \times \dot H^{s_c-1}_x)$.  Then
$$
S_I(u)\leq C\bigl(\|(u, u_t)\|_{L_t^\infty (I; \dot H^{s_c}_x \times \dot H^{s_c-1}_x)}\bigr).
$$
\end{theorem}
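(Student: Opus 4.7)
The plan is to follow the Kenig--Merle concentration-compactness/rigidity roadmap. Argue by contradiction: introduce the minimal critical norm
\[
E_c := \sup\bigl\{ E \ge 0 : \text{every solution with } \|(u,u_t)\|_{L^\infty_t(\dot H^{s_c}_x \times \dot H^{s_c-1}_x)} \le E \text{ has } S_I(u) \lesssim_E 1 \bigr\};
\]
if Theorem~\ref{T:main} fails then $E_c < \infty$, and one can choose a sequence $u_n$ of solutions saturating this threshold whose scattering sizes diverge. After first developing local well-posedness and a robust stability theory at the critical regularity (the assumption that $p$ is even is used here, since it makes $F$ a polynomial and hence smooth enough for high-derivative estimates to close), apply a Bahouri--G\'erard-type linear profile decomposition in $\dot H^{s_c}_x \times \dot H^{s_c-1}_x$ to the data $(u_n(0), \partial_t u_n(0))$, upgrade to nonlinear profiles, and invoke stability to reduce to a single surviving profile. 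This produces a minimal non-scattering solution $u_\infty$ that is \emph{almost periodic modulo the symmetries of the equation}: there exist $N : I \to (0,\infty)$ and $x : I \to \R^3$ such that
\[
\Bigl\{ \bigl( N(t)^{-2/p}\, u_\infty(t,\, x(t) + \cdot/N(t)),\ N(t)^{-2/p-1}\, \partial_t u_\infty(t,\, x(t) + \cdot/N(t)) \bigr) : t \in I \Bigr\}
\]
is precompact in $\dot H^{s_c}_x \times \dot H^{s_c-1}_x$.

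The next step is to classify $u_\infty$ according to the behavior of $N(t)$. Finite speed of propagation together with a ``no spacetime concentration'' argument rules out a self-similar, finite-time blow-up and forces $u_\infty$ to be global in time. After rescaling and time translation, the remaining enemy scenarios are: (i) a \emph{soliton-like} solution for which one may arrange $N(t) \equiv 1$ and $|x(t)| = o(|t|)$; and (ii) a \emph{frequency cascade} for which $N(t) \to \infty$ as $|t| \to \infty$ along some sequence.

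To eliminate (i), apply a Morawetz-type inequality centered at a moving spatial point chosen to track the solution, yielding
\[
\int_I \int_{\R^3} \frac{|u_\infty(t,x)|^{p+2}}{|x - x_0(t)|} \, dx \, dt \lesssim \sup_{t \in I} \bigl\|(u_\infty, \partial_t u_\infty)\bigr\|_{\dot H^{s_c}_x \times \dot H^{s_c-1}_x}^2 .
\]
Almost periodicity with $|x(t)| = o(|t|)$ forces the integrand to admit a uniform lower bound on a set of infinite measure, contradicting finiteness of the right-hand side. To eliminate (ii), run a ``negative regularity'' upgrade: by a double-Duhamel trick, precompactness of the orbit transfers, through the wave propagator, into improved low-frequency regularity, showing $u_\infty \in \dot H^s_x \times \dot H^{s-1}_x$ for some $s < s_c$ and hence, by interpolation, in the energy space $\dot H^1_x \times L^2_x$. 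The classical energy is then finite and conserved, but $N(t) \to \infty$ forces essentially all of the $\dot H^{s_c}_x$ mass to migrate to arbitrarily high frequencies, which is incompatible with a fixed finite classical energy.

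The principal obstacle is the absence of radial symmetry, which is what distinguishes the present work from the Kenig--Merle treatment of the spherically symmetric case. The translation parameter $x(t)$ must be controlled without the a priori reduction $x(t) \equiv 0$: finite speed of propagation is needed to show $x(t)$ cannot drift faster than the light cone, and both the Morawetz step and the negative regularity step must be adapted to accommodate a moving spatial center. Establishing the linear profile decomposition at the critical regularity $\dot H^{s_c}_x \times \dot H^{s_c-1}_x$, together with the associated long-time nonlinear stability theory in the energy-supercritical regime, is the key harmonic-analytic input that makes the whole scheme go through.
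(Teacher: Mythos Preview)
Your outline captures the broad Kenig--Merle architecture, but it contains two genuine gaps that are exactly the technical heart of this paper.

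First, the Morawetz step is miswritten. The Morawetz identity has $\dot H^1_x \times L^2_x$ scaling; there is no inequality of the form
\[
\int_I \int_{\R^3} \frac{|u(t,x)|^{p+2}}{|x-x_0(t)|}\,dx\,dt \ \lesssim\ \|(u,u_t)\|_{L^\infty_t(\dot H^{s_c}_x\times\dot H^{s_c-1}_x)}^2
\]
in the supercritical regime. The correct right-hand side is $\|\nabla_{t,x} u\|_{L^\infty_t L^2_x}^2$, which is the \emph{energy}, and you do not know a priori that this is finite. Consequently the soliton cannot be eliminated until \emph{after} you have proved finite energy---the negative-regularity/double-Duhamel machinery is needed for the soliton just as much as for the cascade, not only for the latter as you suggest.

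Second, ``$|x(t)|=o(|t|)$ can be arranged'' and ``finite speed of propagation shows $x(t)$ cannot drift faster than the light cone'' are both insufficient. Finite speed of propagation gives only $|x(t)-x(\tau)|\le |t-\tau|+O(1)$, and there is no conserved momentum available (since momentum has energy scaling and energy is not yet known to be finite) to upgrade this. The paper must prove a genuine \emph{sub-luminality} bound $|x(t)-x(\tau)|\le(1-\delta)|t-\tau|$ for $|t-\tau|$ large, using the energy-flux identity; this is a new argument (Section~\ref{S:subluminal}). Sub-luminality is then essential input for the double-Duhamel step: in three space dimensions the dispersive decay is only $|t|^{-1}$, so the double-Duhamel integrals do not converge without additional structure. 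The paper localizes in space, exploits sub-luminality to ensure the bulk of $F(u(t))$ and $F(u(\tau))$ are separated by a positive multiple of $|t-\tau|$ from the light cone through the localization window, proves a ``weak diffraction'' estimate (Proposition~\ref{P:weakD}) to control the resulting oscillatory integral, and separately establishes power-law spatial decay of $u$ away from $x(t)$ (Section~\ref{S:quant}) to handle the tails. Your proposal does not confront this convergence problem, which is the principal analytic contribution here.
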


We have not considered other values of $p>4$, for which the nonlinearity is no longer a polynomial in $u$; we felt that it would
muddy the main thrust of the argument, without due reward.

As mentioned above, finite-time blowup of a solution to \eqref{nlw} must be accompanied by divergence of the scattering
size defined in \eqref{E:S defn}.  Thus, Theorem~\ref{T:main} immediately implies

\begin{corollary}[Spacetime bounds]\label{C:main} If $u:I\times\R^3\to\R$ is a maximal-lifespan solution to \eqref{nlw} with $(u,u_t)
\in L_t^\infty (I; \dot H^{s_c}_x \times \dot H^{s_c-1}_x)$, then $u$ is global and moreover,
$$
S_\R(u)\leq C\bigl(\|(u, u_t)\|_{L_t^\infty (\R; \dot H^{s_c}_x \times \dot H^{s_c-1}_x)}\bigr).
$$
\end{corollary}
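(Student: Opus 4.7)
The plan is to deduce Corollary~\ref{C:main} directly from Theorem~\ref{T:main} together with the local well-posedness theory alluded to in Theorem~\ref{T:local} (i.e.\ the fact that, as remarked before Definition~\ref{D:blowup}, blowup in the sense of Definition~\ref{D:blowup} is equivalent to the impossibility of extending the solution). Let $u$ be a maximal-lifespan solution with
$$
A := \bigl\|(u,u_t)\bigr\|_{L_t^\infty(I;\dot H^{s_c}_x \times \dot H^{s_c-1}_x)} < \infty.
$$

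First I would upgrade Theorem~\ref{T:main} to the statement that $S_I(u) \le C(A)$, where $I$ is the full maximal lifespan. For any compact subinterval $J \subset I$, the restriction $u|_J$ is itself a solution in the sense of Definition~\ref{D:solution}, and its $L^\infty_t \dot H^{s_c}_x \times \dot H^{s_c-1}_x$ norm is bounded by $A$. Theorem~\ref{T:main} therefore yields $S_J(u) \le C(A)$ with a constant depending only on $A$. Exhausting $I$ by an increasing sequence of such compact intervals and applying monotone convergence gives $S_I(u) \le C(A)$.

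Next I would use this to rule out finite lifespan. Suppose, toward a contradiction, that $\sup I < \infty$, and pick any $t_1 \in I$. Then $S_{[t_1,\sup I)}(u) \le C(A) < \infty$, so $u$ does not blow up forward in time in the sense of Definition~\ref{D:blowup}. By Theorem~\ref{T:local}, $u$ can then be extended to a strictly larger time interval, contradicting the assumption that $I$ is the maximal lifespan. The symmetric argument excludes $\inf I > -\infty$. Hence $I = \R$, and the bound $S_\R(u) \le C(A)$ is then immediate from the first step (or equivalently from a direct application of Theorem~\ref{T:main} to the now global solution).

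There is no substantive obstacle: the corollary is a bookkeeping consequence of Theorem~\ref{T:main} and the standard blowup criterion. The only point requiring a moment of care is that Theorem~\ref{T:main} is stated for an arbitrary solution on an interval, so its hypothesis passes from a maximal-lifespan solution to all of its compact subintervals with the same constant $A$; this is precisely what makes the monotone-convergence step legitimate.
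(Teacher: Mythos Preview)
Your argument is correct and matches the paper's reasoning: the corollary follows immediately from Theorem~\ref{T:main} together with the blowup criterion in Theorem~\ref{T:local}. The intermediate step of passing to compact subintervals and using monotone convergence is harmless but unnecessary, since Theorem~\ref{T:main} is already stated for an arbitrary interval $I$ and thus applies directly to the full maximal lifespan.
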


This corollary takes on a more appealing form if we rephrase it in the contrapositive:

\begin{corollary}[Nature of blowup]\label{C:blowup} A solution $u:I\times\R^3\to\R$ to \eqref{nlw} can only blow up in finite time
or be global but fail to scatter if its $\dot H^{s_c}_x \times \dot H^{s_c-1}_x$ norm diverges.
\end{corollary}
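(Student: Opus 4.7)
The plan is to derive Corollary~\ref{C:blowup} directly from Corollary~\ref{C:main}, essentially by contraposition together with one standard local-theory step. Suppose $u:I\times\R^3\to\R$ is a maximal-lifespan solution to \eqref{nlw} whose critical norm remains bounded throughout $I$, that is, $(u,u_t)\in L_t^\infty(I;\dot H^{s_c}_x\times\dot H^{s_c-1}_x)$; the goal is to show that $u$ is global and scatters.

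First I would apply Corollary~\ref{C:main} under this hypothesis, which immediately yields $I=\R$ together with the quantitative bound
$$S_\R(u)\leq C\bigl(\|(u,u_t)\|_{L_t^\infty(\R;\dot H^{s_c}_x\times\dot H^{s_c-1}_x)}\bigr)<\infty.$$
In particular $u$ does not blow up in the sense of Definition~\ref{D:blowup}. The remaining step is to upgrade finiteness of the scattering size $S_\R(u)$ to actual scattering: using the Duhamel representation \eqref{old duhamel} for $t>s$, the Strichartz inequality controls the relevant $\dot H^{s_c}_x\times\dot H^{s_c-1}_x$ increment of $(u(t),u_t(t))$ by a power of $\|u\|_{L_{t,x}^{2p}([s,t]\times\R^3)}$, which tends to zero as $s,t\to+\infty$ because $S_\R(u)<\infty$. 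Hence $(u(t),u_t(t))$ minus the corresponding free evolution is Cauchy in $\dot H^{s_c}_x\times\dot H^{s_c-1}_x$ as $t\to+\infty$, yielding asymptotic free data and thus scattering forward in time; the backward-in-time statement is symmetric.

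I do not expect any substantive obstacle here: the one nontrivial ingredient is Corollary~\ref{C:main} itself, which is the main result of the paper. The remaining Strichartz/Duhamel argument is part of the standard local theory encoded in Theorem~\ref{T:local}, and the remaining bookkeeping amounts only to matching the solution class in Definition~\ref{D:solution} with the corresponding notion of scattering in the critical Sobolev space.
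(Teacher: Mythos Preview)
Your proposal is correct and matches the paper's approach: the corollary is simply the contrapositive of Corollary~\ref{C:main}, with the passage from finite scattering size to scattering supplied by the standard local theory (the scattering clause of Theorem~\ref{T:local}). Your explicit Duhamel/Strichartz sketch of that last step is exactly what underlies Theorem~\ref{T:local}, so nothing further is needed.
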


For spherically-symmetric initial data, Theorem~\ref{T:main} was proved by Kenig and Merle \cite{kenig-merle:wave sup}.
The (non-radial) analogue of Theorem~\ref{T:main} for NLS in dimensions $d\geq 5$ was proved in \cite{Miel} by adapting the methods of \cite{Berbec}.
We will discuss these papers and their relation to the results presented here more fully when we outline the proof of
Theorem~\ref{T:main}.  Before doing this, let us briefly review some of the backstory and in particular, the origins of
some of the techniques we will be using.

When $p=4$, or equivalently, $s_c=1$, the critical Sobolev norm is automatically bounded in time by virtue of the
conservation of energy:
\begin{equation}\label{energy}
E(u) = \int_{\R^3} \tfrac12|u_t|^2 + \tfrac12|\nabla u|^2 + \tfrac{1}{p+2}|u|^{p+2} \, dx.
\end{equation}
This \emph{energy-critical} case of \eqref{nlw} has received particular attention because of this property.  Global
well-posedness was proved in a series of works \cite{Grillakis:3Dwave,Grillakis:lowDwave,Kapitanskii,Rauch,Struwe:rad,ShatahStruwe:reg,ShatahStruwe}
with finiteness of the scattering size being added later; see \cite{bahouri-gerard,GinibreSofferVelo,Nakanishi,Pecher,tao:Ecrit wave}.  Certain monotonicity formulae,
the Morawetz and energy flux identities, play an important role in all these
results.  It is important that these monotonicity formulae also have critical scaling.

In the energy-supercritical case discussed in this paper, all conservation laws and monotonicity formulae have scaling
below the critical regularity. At the present moment, there is no technology for treating large-data dispersive
equations without some \emph{a priori} control of a critical norm. Indeed, one may assert that the fundamental
difficulty associated with the 3D Navier--Stokes system is controlling the possible growth of (scaling-)critical norms.
This is the purpose of the $L^\infty_t(I; \dot H^{s_c}_x \times \dot H^{s_c-1}_x)$ assumption in Theorem~\ref{T:main};
it plays the role of the missing conservation law at the critical regularity.  Nevertheless, the fact that the monotonicity formulae have
non-critical scaling remains a problem.

The problem of having monotonicity formulae at a different regularity to the critical (coercive) conservation laws is a
difficulty intrinsic to the nonlinear Schr\"odinger equation and it was in this setting that the first methods were
developed for its treatment.  The original breakthrough in this direction was Bourgain's paper \cite{borg:scatter}.
His work introduced the induction on energy technique, which was then further developed in \cite{CKSTT:gwp,tao:gwp radial}. In this
paper, we will use a variant of this method that was introduced by Kenig and Merle, \cite{KenigMerle:H1}, building on
work of Keraani, \cite{keraani-l2}.  In this latter approach, one first shows that failure of the theorem implies the
existence of minimal counterexamples.  This part of the argument is based on concentration-compactness techniques and is very
robust, with very little that is equation-specific.  It breaks the scaling symmetry because such minimal
counterexamples have an intrinsic length scale, albeit time-dependent.  The second part of this approach is to use
conservation laws and/or monotonicity formulae to show that such counterexamples do not exist.  Like the conservation
laws and monotonicity formulae themselves, this part of the argument is intrinsically equation dependent.

\subsection{Outline of the proof}\label{SS:outline}

We argue by contradiction.  Failure of Theorem~\ref{T:main} implies the existence of very special types of
counterexamples.  Such counterexamples are then shown to have a wealth of properties not immediately apparent from
their construction, so many properties, in fact, that they cannot exist.

While we will make some further reductions later, the main property of the special counterexamples is almost periodicity modulo
symmetries:

\begin{definition}[Almost periodicity modulo symmetries]\label{D:ap}
A solution $u$ to \eqref{nlw} with lifespan $I$ is said to be \emph{almost periodic modulo symmetries} if $(u,u_t)$ is bounded
in $\dot H^{s_c}_x \times \dot H^{s_c-1}_x$ and there
exist functions $N: I \to \R^+$, $x:I\to \R^3$, and $C: \R^+ \to \R^+$ such that for all $t \in I$ and $\eta > 0$,
$$
\int_{|x-x(t)| \geq C(\eta)/N(t)} \bigl||\nabla|^{s_c} u(t,x)\bigr|^2\, dx
+ \int_{|x-x(t)| \geq C(\eta)/N(t)} \bigl||\nabla|^{s_c-1} u_t(t,x)\bigr|^2\, dx\leq \eta
$$
and
$$
\int_{|\xi| \geq C(\eta) N(t)} |\xi|^{2s_c}\, | \hat u(t,\xi)|^2\, d\xi
+ \int_{|\xi| \geq C(\eta) N(t)} |\xi|^{2(s_c-1)}\, | \hat u_t(t,\xi)|^2\, d\xi\leq \eta .
$$
We refer to the function $N(t)$ as the \emph{frequency scale function} for the solution $u$, to $x(t)$ as the
\emph{spatial center function}, and to $C(\eta)$ as the \emph{compactness modulus function}.
\end{definition}

\begin{remarks}
1. Given a time $t_0\in I$ we may rescale the function $u(t_0,x)$ so as to renormalize the frequency scale to equal
one.  We may then perform a spatial translation to bring the spatial center of the function to the origin.  Noting that
these operations are symmetries of our equation and incorporating an additional time translation, this procedure yields
a solution to \eqref{nlw} called the \emph{normalization} of $u$ associated to the time $t_0$:
\begin{align}\label{E:Normalization}
u^{[t_0]}(t,x) &:= N(t_0)^{-\frac2p} u\bigl(t_0+t N(t_0)^{-1},x(t_0) + x N(t_0)^{-1}\bigr).
\end{align}
Note that the normalization of $u$ is still almost periodic modulo symmetries; indeed, it admits the same compactness modulus function
as~$u$.

2. By the Ascoli--Arzela Theorem, a family of functions is precompact in $\dot H^{s}_x(\R^3)$ if and only if it is
norm-bounded and there exists a compactness modulus function $C$ so that
$$
\int_{|x| \geq C(\eta)} \bigl||\nabla|^{s} f(x)\bigr|^2\ dx + \int_{|\xi| \geq C(\eta)} |\xi|^{2s} \, |\hat f(\xi)|^2\ d\xi \leq \eta
$$
for all functions $f$ in the family.  Thus, an equivalent formulation of Definition~\ref{D:ap} is as follows: $u$ is almost
periodic modulo symmetries if and only if
\begin{align}\label{E:Standard orbit}
\bigl\{ \bigl( u^{[t_0]}(0), \partial_t u^{[t_0]}(0) \bigr)  : t_0\in I \bigr\}
\end{align}
is a precompact  subset of $\dot H^{s_c}_x\times \dot H^{s_c-1}_x$.

3.  The continuous image of a compact set is compact.  Thus, by Sobolev embedding, almost periodic (modulo symmetries)
solutions obey the following: For each $\eta>0$ there exists $C(\eta)>0$ so that
\begin{equation}\label{E:u' compact}
\bigl\| u(t,x) \bigr\|_{L^\infty_t L_x^{\frac{3p}{2}}(\{|x-x(t)|\geq C(\eta)/N(t)\})}
    + \bigl\| \ntx u(t,x) \bigr\|_{L^\infty_t L_x^{\frac{3p}{p+2}}(\{|x-x(t)|\geq C(\eta)/N(t)\})}
    \leq \eta
\end{equation}
where $\ntx u =(u_t,\nabla u)$ denotes the space-time gradient of $u$.

\end{remarks}

With these preliminaries out of the way, we can now describe the first major milestone in the proof of Theorem~\ref{T:main}.

\begin{theorem}[Reduction to almost periodic solutions, \cite{kenig-merle:wave sup}]\label{T:reduct}
Assume Theorem~\ref{T:main} failed.  Then there exists a maximal-lifespan solution $u:I\times\R^3\to \R$ to \eqref{nlw} such that
$(u,u_t) \in L_t^\infty (I; \dot H^{s_c}_x \times \dot H^{s_c-1}_x)$, $u$ is almost periodic modulo symmetries,
and $u$ blows up both forward and backward in time. Moreover, $u$ is minimal among all blowup solutions in the sense that
$$
\sup_{t\in I} \bigl\| \bigl(u(t), u_t(t)\bigr)\bigr\|_{\dot H^{s_c}_x\times \dot H^{s_c-1}_x}
 \leq \sup_{t\in J}\bigl\| \bigl(v(t), v_t(t)\bigr)\bigr\|_{\dot H^{s_c}_x\times \dot H^{s_c-1}_x}
$$
for all maximal-lifespan solutions $v:J\times\R^3 \to \R$ that blow up in at least one time direction.
\end{theorem}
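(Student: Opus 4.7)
The plan is to adopt the standard concentration-compactness and minimal counterexample strategy originating in \cite{keraani-l2,KenigMerle:H1}. Define
$$
L(A) := \sup\bigl\{ S_I(u) : u \text{ a maximal-lifespan solution to } \eqref{nlw} \text{ with } \|(u,u_t)\|_{L^\infty_t(I;\dot H^{s_c}_x\times\dot H^{s_c-1}_x)} \leq A\bigr\}.
$$
The small-data theory implicit in Theorem~\ref{T:local} yields $L(A)<\infty$ for $A$ sufficiently small, while failure of Theorem~\ref{T:main} forces the critical value $A_*:=\sup\{A : L(A)<\infty\}$ to be strictly less than $+\infty$. Selecting a sequence of maximal-lifespan solutions $u_n:I_n\times\R^3\to\R$ with
$$
\|(u_n,\partial_t u_n)\|_{L^\infty_t(I_n;\dot H^{s_c}_x\times\dot H^{s_c-1}_x)}\to A_* \qquad\text{and}\qquad S_{I_n}(u_n)\to\infty,
$$
time-translation allows us to arrange that each $u_n$ blows up forward in time from $t=0$.

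The central technical ingredient is a linear profile decomposition in $\dot H^{s_c}_x\times\dot H^{s_c-1}_x$ adapted to the free wave propagator. Applied to $(u_n(0),\partial_t u_n(0))$ it produces, after passing to a subsequence, asymptotically orthogonal parameters $(\lambda_n^j,t_n^j,x_n^j)\in\R^+\times\R\times\R^3$, profiles $(\phi^j,\psi^j)\in\dot H^{s_c}_x\times\dot H^{s_c-1}_x$, and remainders whose free wave evolution vanishes as $J,n\to\infty$ in the scattering space $L^{2p}_{t,x}$, together with an asymptotic Pythagorean identity for the critical norm. Each $(\phi^j,\psi^j)$ is promoted to a maximal-lifespan nonlinear profile $v^j$, the solution of \eqref{nlw} whose asymptotic behaviour (linear-in or linear-out) matches $(\phi^j,\psi^j)$ under the scaling and translation prescribed by the parameters.

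The rigidity step couples the Pythagorean identity with the definition of $A_*$. If two or more profiles were nontrivial, each would have critical norm strictly below $A_*$, hence each $v^j$ would be global with $S(v^j)\leq L(A_*-\delta)<\infty$. A nonlinear perturbation lemma built from the local theory and Strichartz estimates then allows us to superpose the nonlinear profiles and the linear evolution of the remainder, producing an approximate solution uniformly close to $u_n$ with uniformly bounded scattering size --- contradicting $S_{I_n}(u_n)\to\infty$. Thus the decomposition collapses to a single profile with trivial remainder, and after applying the associated symmetries $(u_n(0),\partial_t u_n(0))$ converges in $\dot H^{s_c}_x\times\dot H^{s_c-1}_x$ to the initial data of a maximal-lifespan solution $u$ with critical norm exactly $A_*$ that blows up forward in time. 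A further application of the same one-profile reduction to the time-reversed picture furnishes blowup backward in time; minimality of $u$ is automatic, since the very definition of $A_*$ prohibits any blowup solution below that threshold.

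To obtain almost periodicity modulo symmetries, re-run the one-profile reduction on the shifted sequences $(u(t_k),\partial_t u(t_k))$ for an arbitrary $t_k\in I$. The minimality of $u$ again forces the decomposition to reduce to a single surviving profile, and this furnishes parameters $N(t_k), x(t_k)$ relative to which the normalizations \eqref{E:Normalization} have a convergent subsequence in $\dot H^{s_c}_x\times\dot H^{s_c-1}_x$; by the second remark following Definition~\ref{D:ap}, this is precisely almost periodicity. The main technical obstacle in executing this plan is the linear profile decomposition and its companion nonlinear perturbation lemma in the energy-supercritical regime: one must identify a well-adapted scattering norm and Strichartz pair, verify asymptotic decoupling of the nonlinear profiles in that norm, and propagate stability through the nonlinearity --- it is precisely here that the evenness of $p$ is convenient, ensuring that $F(u)=|u|^p u$ is a smooth polynomial with Lipschitz bounds compatible with the perturbation estimates.
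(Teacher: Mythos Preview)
The paper does not actually prove this theorem; it is quoted from \cite{kenig-merle:wave sup} and the surrounding text merely remarks that the reduction to almost periodic solutions is ``now a standard technique,'' citing \cite{keraani-l2,KenigMerle:H1,kenig-merle:wave,KenigMerle:H1/2,KKSV,Berbec,ClayNotes,Miel,KTV,KVZ,TVZ:cc,TVZ:sloth}.  Your sketch is a faithful outline of exactly that standard Kenig--Merle concentration-compactness road map and is essentially correct.

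Two small refinements are worth noting.  First, the cleanest way to secure blowup in \emph{both} time directions is not a separate ``time-reversed'' pass, but rather to choose the base times $t_n\in I_n$ at the outset so that $S_{[t_n,\sup I_n)}(u_n)=S_{(\inf I_n,t_n]}(u_n)\to\infty$; the one-profile limit then automatically inherits infinite scattering size on both sides.  Second, your closing remark misattributes the role of the hypothesis that $p$ is even: the linear profile decomposition and the stability/perturbation lemma for \eqref{nlw} go through for all $p>4$ (indeed, \cite{kenig-merle:wave sup} does not assume evenness), and in the present paper the polynomial structure of $F(u)$ is exploited only later, in the recurrence argument of Section~\ref{S:AddnlDecay}.
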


The reduction to almost periodic solutions is now a standard technique in the analysis of dispersive equations at critical
regularity.  Their existence was first proved by Keraani \cite{keraani-l2} in the context of the mass-critical NLS and first used as
a tool for proving global well-posedness by Kenig and Merle \cite{KenigMerle:H1}.  As noted above, Theorem~\ref{T:reduct} was proved
by Kenig and Merle in \cite{kenig-merle:wave sup}; for other instances of the same techniques, see
\cite{kenig-merle:wave,KenigMerle:H1/2,KKSV,Berbec,ClayNotes,Miel,KTV,KVZ,TVZ:cc,TVZ:sloth}.

We will also need the following further refinement of Theorem~\ref{T:reduct}:

\begin{theorem}[Three special scenarios for blowup, \cite{Berbec}]\label{T:enemies}
Suppose that Theorem~\ref{T:main} failed.  Then there exists a maximal-lifespan solution $u:I\times\R^3\to \R$, which
obeys $(u,u_t) \in L_t^\infty (I; \dot H^{s_c}_x \times \dot H^{s_c-1}_x)$, is almost periodic modulo symmetries, and
$S_I(u)=\infty$.  Moreover, we can also ensure that the lifespan $I$ and the frequency scale function $N:I\to\R^+$
match one of the following three scenarios:
\begin{itemize}
\item[I.] (Finite-time blowup) We have that either $\sup I<\infty $ or $|\inf I|<\infty$.
\item[II.] (Soliton-like solution) We have $I = \R$ and
\begin{equation*}
 N(t) = 1 \quad \text{for all} \quad t \in \R.
\end{equation*}
\item[III.] (Low-to-high frequency cascade) We have $I = \R$,
\begin{equation*}
\inf_{t \in \R} N(t) \geq 1, \quad \text{and} \quad \limsup_{t \to +\infty} N(t) = \infty.
\end{equation*}
\end{itemize}
\end{theorem}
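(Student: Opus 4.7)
Start with the minimal blowup solution $u$ supplied by Theorem~\ref{T:reduct}, with lifespan $I$ and frequency scale function $N$. The idea is to perform a case analysis on the global behavior of $N$ and invoke the scaling symmetry~\eqref{scaling} together with the orbit compactness~\eqref{E:Standard orbit} to normalize $N$ into one of the three prescribed forms. The overall structure follows the three-enemies reduction from~\cite{Berbec}.

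If $\sup I<\infty$ or $|\inf I|<\infty$ we are already in scenario~I, so assume from now on that $I=\R$. The crucial step is to prove a lower bound $\inf_{t\in\R} N(t)>0$; a single rescaling via~\eqref{scaling} then upgrades this to $\inf_{t\in\R} N(t)\geq 1$. Suppose for contradiction that $N(t_n)\to 0$ along some sequence $t_n\in\R$. By precompactness of the set~\eqref{E:Standard orbit}, the normalizations $(u^{[t_n]}(0),\partial_t u^{[t_n]}(0))$ admit a strong $\dot H^{s_c}_x\times\dot H^{s_c-1}_x$ subsequential limit $(v_0,v_1)$, which launches a solution $v$ via the stability theory for~\eqref{nlw}. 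The frequency scale of $v$ near $t=0$ is of order one, while the rescaling by $N(t_n)\to 0$ places the frequency content of the corresponding piece of the original $u$ at arbitrarily low scales; a linear profile decomposition for $(u(0),u_t(0))$ therefore identifies $v$ with one profile at such a low scale, and the remaining profiles must carry positive critical norm at a separated scale. Propagating one of those residual profiles through~\eqref{nlw} using the stability theory produces a blowup solution of strictly smaller $\dot H^{s_c}_x\times \dot H^{s_c-1}_x$ norm than $u$, contradicting the minimality in Theorem~\ref{T:reduct}.

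Having secured $\inf_{t\in\R} N(t)\geq 1$, split on whether $\sup_{t\in\R} N(t)=\infty$. If so, then either the $\limsup$ at $+\infty$ or the $\limsup$ at $-\infty$ is infinite; by a time reversal we are in scenario~III. Otherwise $N$ takes values in a compact subinterval of $(0,\infty)$, so the full orbit $\{(u^{[t]}(0),\partial_t u^{[t]}(0)):t\in\R\}$ is precompact in $\dot H^{s_c}_x\times\dot H^{s_c-1}_x$. Picking any $t_n\to+\infty$ and extracting a strong subsequential limit produces a new global minimal blowup solution $w$ whose frequency function takes values in the same compact set; a final rescaling, again using~\eqref{scaling}, arranges $N_w\equiv 1$, yielding scenario~II.

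The main obstacle is the lower bound $\inf N>0$: because $\dot H^{s_c}_x\times\dot H^{s_c-1}_x$ is scale-invariant, the contradiction must come from producing a blowup solution of \emph{strictly} smaller critical norm, which requires the two-profile decoupling sketched above and is the only place where the full strength of the minimality property of $u$ is invoked. The subsequent dichotomy between scenarios~II and~III is then a comparatively routine compactness extraction.
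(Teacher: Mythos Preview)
Your argument for $\inf_{t\in\R} N(t)>0$ when $I=\R$ does not work. The normalizations $(u^{[t_n]}(0),\partial_t u^{[t_n]}(0))$ converge to $(v_0,v_1)$, but this tells you about the shape of $u$ near time $t_n$, not about the structure of $(u(0),u_t(0))$. There is no mechanism by which a profile decomposition of the fixed pair $(u(0),u_t(0))$ --- which by almost periodicity is concentrated at the single scale $N(0)$ and single position $x(0)$ --- would produce $v$ as one profile together with residual profiles at separated scales. Almost periodicity says precisely that $(u(0),u_t(0))$ has a \emph{one}-profile decomposition (itself, with zero remainder), so no ``strictly smaller'' blowup solution can be extracted this way. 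The situation $I=\R$ with $\inf N=0$ is not in contradiction with the minimality coming out of Theorem~\ref{T:reduct}.

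The argument in \cite{Berbec} (which the paper simply cites, calling it ``essentially combinatorial'') has a different architecture. One does not show that the original $u$ from Theorem~\ref{T:reduct} already falls, after a single rescaling, into one of the three scenarios. Rather, one performs a case analysis on the oscillation of $N$: for each $T>0$ one measures how much $N$ can vary over intervals of natural length $\sim T/N(t_0)$ about an arbitrary base point $t_0$, and according to whether this oscillation stays bounded or diverges as $T\to\infty$ one \emph{selects} a sequence $t_n$ so that the limit of the normalizations $u^{[t_n]}$ is a new almost periodic minimal-norm solution of the desired type. The possibility $\inf N=0$ is absorbed into this limiting procedure, not ruled out by minimality. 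Your soliton extraction at the end is closer in spirit to this, though note that a single application of \eqref{scaling} cannot force $N_w\equiv 1$; once $N_w$ is trapped between two positive constants one instead \emph{redefines} $N_w\equiv 1$ and enlarges the compactness modulus $C(\eta)$ accordingly.
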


The reference given above discusses the energy-critical NLS; however, the result follows from Theorem~\ref{T:reduct} by
the same arguments since they are essentially combinatorial and so apply to any dispersive equation.  As we are
treating a problem whose critical regularity lies above that of the conserved quantity, the energy-critical NLS serves
as a better model than the mass-critical NLS.  This is the reason for using this set of special scenarios rather than
those obtained in \cite{KTV}.

A further manifestation of the minimality of $u$ as a blowup solution is the absence of a scattered wave at the
endpoints of the lifespan $I$; more formally, we have the following Duhamel formulae, which play an important role in
proving needed decay.  This is a robust consequence of almost periodicity modulo symmetries; see, for example,
\cite{ClayNotes}.

\begin{lemma}[No-waste Duhamel formulae]\label{L:duhamel}
Let $u$ be an almost periodic solution to \eqref{nlw} on its maximal-lifespan $I$.  Then, for all $t\in I$,
\begin{equation}\label{Duhamel}
\begin{aligned}
\begin{bmatrix} u(t)\\ u_t(t)\end{bmatrix}
&= \int_{t}^{\sup I} \begin{bmatrix} \frac{\sin\bigl((t-s)\sqrtDelta\bigr)}{\sqrtDelta}\\ \cos\bigl((t-s)\sqrtDelta\bigr) \end{bmatrix}F(u(s))\,ds\\
&= -\int_{\inf I}^t \begin{bmatrix} \frac{\sin\bigl((t-s)\sqrtDelta\bigr)}{\sqrtDelta}\\ \cos\bigl((t-s)\sqrtDelta\bigr) \end{bmatrix}F(u(s))\,ds
\end{aligned}
\end{equation}
as weak limits in $\dot H^{s_c}_x\times \dot H^{s_c-1}_x$.
\end{lemma}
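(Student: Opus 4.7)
My plan is to reduce both identities to a linear weak-convergence statement at each endpoint of $I$, and then verify it using almost periodicity together with linear dispersion for the free wave equation. On any compact subinterval $[t,T]\subset I$ the standard Duhamel formula from Definition~\ref{D:solution}, recentered at time $T$, reads
\[\begin{bmatrix} u(t)\\ u_t(t)\end{bmatrix} = W(t-T)\begin{bmatrix} u(T)\\ u_t(T)\end{bmatrix} + \int_t^T \begin{bmatrix} \frac{\sin\bigl((t-s)\sqrtDelta\bigr)}{\sqrtDelta} \\ \cos\bigl((t-s)\sqrtDelta\bigr)\end{bmatrix} F(u(s))\, ds,\]
where $W(\tau)$ denotes the linear wave propagator. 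Hence the lemma reduces to showing that
\[W(t-T)\bigl(u(T),u_t(T)\bigr) \rightharpoonup 0 \quad\text{weakly in }\dot H^{s_c}_x\times\dot H^{s_c-1}_x\]
as $T\to\sup I$, and analogously as $T\to\inf I$.

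To verify this weak convergence along a sequence $T_n$ approaching the endpoint, I would test against a dense family of Schwartz pairs $\Phi\in\mathcal{S}\times\mathcal{S}$ whose Fourier supports are compactly contained in $\R^3\setminus\{0\}$. Since $W$ is unitary on $\dot H^{s_c}_x\times\dot H^{s_c-1}_x$, the pairing can be rewritten as $\bigl\langle (u(T_n),u_t(T_n)),\,W(T_n-t)\Phi\bigr\rangle$, which shifts the time dependence onto the smooth test element. Given $\eta>0$, the almost periodicity of Definition~\ref{D:ap} yields a decomposition
\[(u(T_n),u_t(T_n))=v_n^{\mathrm{core}}+v_n^{\mathrm{tail}}\]
with $\|v_n^{\mathrm{tail}}\|_{\dot H^{s_c}_x\times\dot H^{s_c-1}_x}\leq\eta$ and $v_n^{\mathrm{core}}$ simultaneously truncated to $\{|x-x(T_n)|\leq C(\eta)/N(T_n)\}$ in physical space and to $\{|\xi|\leq C(\eta) N(T_n)\}$ in frequency. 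Cauchy--Schwarz then controls the tail contribution to the pairing by $\eta\|\Phi\|_{\dot H^{s_c}_x\times\dot H^{s_c-1}_x}$.

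For the core contribution, the plan is to exploit the fact that $W(T_n-t)\Phi$ is a smooth free wave whose pairing with the phase-space-concentrated core vanishes in the limit, thanks to three complementary mechanisms. If $|T_n-t|\to\infty$, one invokes the $|\tau|^{-1}$ pointwise dispersive decay of $3$D Schwartz free waves. If $N(T_n)\to 0$, the Fourier supports of $\widehat{v_n^{\mathrm{core}}}$ and $\hat\Phi$ eventually become disjoint, so the pairing vanishes outright. And if $N(T_n)\to\infty$---as happens in finite-time blowup, by a standard argument combining local well-posedness with almost periodicity---the shrinking spatial support of $v_n^{\mathrm{core}}$ combined with an $L^1$--$L^\infty$ H\"older bound yields the decay. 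Sending first $n\to\infty$ and then $\eta\to 0$ closes the argument. The main obstacle is the quantitative unification of these regimes, i.e.\ ensuring the dispersive bound is robust under the scale- and position-dependent rescalings dictated by $N(T_n)$ and $x(T_n)$; in practice this is handled via a Littlewood--Paley decomposition of $\Phi$ and sharp $3$D wave stationary-phase/pointwise bounds on each Littlewood--Paley piece, carried out in detail in \cite{ClayNotes}.
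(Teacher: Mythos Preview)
Your proposal is correct and matches the paper's treatment: the paper does not give a self-contained proof but simply cites \cite{ClayNotes}, and your sketch is precisely the standard argument carried out there (reduce to weak convergence of the linear piece via the Duhamel formula, transfer the propagator to the test function, then exploit precompactness of the normalizations together with dispersion to kill the pairing). One small point of precision: your ``core'' cannot be simultaneously sharply localized in physical and frequency space; in practice one works directly with the precompactness of the normalizations $(u^{[T_n]}(0),\partial_t u^{[T_n]}(0))$, passes to a strongly convergent subsequence, and then intertwines the symmetry action $g_n$ with the propagator so that the problem reduces to either a divergent rescaling/translation (which converges weakly to zero against any fixed element) or a divergent time parameter $N(T_n)(T_n-t)$ (handled by dispersion), exactly as you indicate at the end.
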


Representations of this type are central tools for improving the decay and/or regularity properties of the solution
$u$.  For the problem under discussion in this paper, it is better decay that is required since the key monotonicity
formula (the Morawetz identity) and the key conservation law (the energy) have $\dot H^1_x\times L^2_x$ scaling.   We
need access to these identities in order to preclude the soliton-like and frequency-cascade solutions (described in
Theorem~\ref{T:enemies}), respectively. It is considerably easier to dispense with the finite-time blowup solution:
minimality forces the solution to lie inside a light cone, which in turn implies that the energy is zero.  This
argument is presented in Section~\ref{S:ftb} and is little different from the treatment in \cite{kenig-merle:wave sup}.

A key problem in low dimensional situations, such as the problem under discussion in this paper, is making the
integrals \eqref{Duhamel} converge in a better sense, for example, in some norm.  In \cite{kenig-merle:wave sup}, an
incoming/outgoing wave decomposition is used together with the weighted decay available from radial Sobolev embedding.
A not dissimilar technique was used in \cite{KTV,KVZ}, which studied 2D NLS; the 3D NLW has the same poor dispersive
estimate as the 2D NLS.

In this paper, we will prove that the Duhamel integrals converge by making use of the energy flux identity (cf.
Lemma~\ref{L:energy flux}); the same idea (albeit with a different purpose) was used in \cite[Corollary~4.3]{tao:Ecrit wave}.
This is much weaker than the weighed estimates available in the radial case.  Nevertheless, by expanding on the
ideas in our earlier paper \cite{Berbec}, we are able to show that $u$ lies in $L_t^\infty L^q_x(\R^3)$ for some $q<\tfrac{3p}2$,
the exponent given by Sobolev embedding.  This is the topic of Section~\ref{S:AddnlDecay}.  While this does constitute
better decay than the apriori bound, it is not sufficient to use the monotonicity/conservation laws; these require
$L_x^2$-type control on $\ntx u$.

As in \cite{Berbec}, we will employ the double Duhamel identity to upgrade the $L^q_x$ estimates to the better kind of
decay required. This identity was first introduced in \cite{CKSTT:gwp} for the nonlinear Schr\"odinger equation and
results from taking the inner product between the two formulae in \eqref{Duhamel}.  We have not seen this technique
used before for the nonlinear wave equation and, in light of this, it is perhaps worth noting that in order to maintain
the natural structure of the formula, one should take the inner-product of the two representations of the spacetime derivative $\ntx u(0)$,
rather than of $u(0)$ as in the Schr\"odinger case.  This is a manifestation of the fact that
\eqref{nlw} is second-order in time.

The double Duhamel integrals have very poor convergence properties.  In \cite{Berbec}, this restricted us to working in
five or more (spatial) dimensions. Due to the different nature of the dispersive estimate, this would be analogous to
dimensions six or higher for the wave equation.  Making the double Duhamel formula converge is quite an undertaking, as
we will describe.  First, we localize in space.  This was done already in \cite{CKSTT:gwp} and results in an
improvement equivalent to lowering the applicable dimension by two, which is still insufficient for the nonlinear wave
equation in three dimensions.

In the manner we have described it so far, the space-localized double Duhamel formula reads as follows:
\begin{equation}\label{fake monster}
\begin{aligned}
\int_{\R^3} \chi(x) \bigl| \ntx u(0)\bigr|^2 \,dx
    ={} & - \int_0^\infty \int_{-\infty}^0 \bigl\langle \nabla \tfrac{\sin(|\nabla|t)}{|\nabla|} F(t),\ %
        \chi \nabla \tfrac{\sin(|\nabla|\tau)}{|\nabla|} F(\tau)\bigr\rangle\,d\tau\,dt \\
&     - \int_0^\infty \int_{-\infty}^0 \bigl\langle \cos(|\nabla|t) F(t),\ %
        \chi \cos(|\nabla|\tau) F(\tau)\bigr\rangle\,d\tau\,dt,
\end{aligned}
\end{equation}
where $F(t)$ is short-hand for $F(u(t))$, the nonlinearity in \eqref{nlw}, $\chi$ denotes a spatial cutoff function,
and the inner products are in vector- and scalar-valued $L^2_x(\R^3)$, respectively.  When actually used in
Section~\ref{S:finite E}, there will be additional frequency projections and, in the first occurrence, (fractional)
differential operators.

Our technique for making the integrals in \eqref{fake monster} converge is inspired by a consideration of geometric
optics: In a sense, \eqref{fake monster} represents the nonlinearity $F(t)$ at time $t$ looking at the nonlinearity
$F(\tau)$ at time $\tau$ though a `keyhole' whose aperture is the support of $\chi$. Note that the main part of the
nonlinearity $F(t)$ lies near the center of the wave-packet at that time, namely, $x(t)$.  This indicates the path we
will follow: (a) make sure that points near $x(t)$ cannot see points near $x(\tau)$, at least not directly, (b) control
the amount of diffraction associated with the aperture, and (c) control the contribution from points far from $x(t)$
and/or $x(\tau)$.

For short times, part (a) of this programme is immediate from finite speed of propagation.  If the aperture is far from
$x(0)$, then $x(t)$ and $x(\tau)$ do not have time to travel far enough to see one another.  While this very naive
picture continues to hold in the long-time regime, that is, $x(t)$ cannot catch up to a light ray emanating from
$x(\tau)$ that passes through the aperture, there is no improvement with the passing of time that might allow the time
integral to converge.  We thus need to show that $x(t)$ and $x(\tau)$ travel strictly slower than light.  This is the
topic of Section~\ref{S:subluminal}.  In previous work on critical dispersive equations, the motion of $x(t)$ has been
constrained by using conservation laws, specifically, the conservation of momentum.  This approach is not available in
this case, since we need to control $x(t)$ first in order to obtain finiteness of the conserved quantities.  In the
case of radial data, $x(t)\equiv 0$.

Part (b) of the programme outlined above is encapsulated in Proposition~\ref{P:weakD}.  Thanks to the sub-luminality
proved in Section~\ref{S:subluminal}, we need not consider very long times.

The proof that $u$ (and so also $F(u)$) decays quickly away from $x(t)$ is the subject of Section~\ref{S:quant}, which
handles part (c) of our programme. To the best of our knowledge, this is the first instance when power-law decay has
been obtained without the benefit of radial initial data in the setting of critical dispersive equations.  Moreover, we
obtain this decay in a scaling-invariant space.  The argument uses the energy flux identity to make the Duhamel
integral converge. This places the long-time piece in $L_x^q$ for $q>3p/2$.  To compensate for this, we interpolate
with the decay estimates obtained in Section~\ref{S:AddnlDecay}, which show that $u\in L^\infty_t L^q_x$ for some $q<3p/2$.

In Section~\ref{S:finite E}, we pull these three threads together to prove that not only does $u$ have finite energy,
but even that the energy decays with a power-law away from $x(t)$.  This is done using an iterative procedure that
takes one from $s_c$ derivatives to $s_c-\eps$ derivatives to $s_c-2\eps$ derivatives and so forth.  The last step, from $1+\eps$
derivatives to finite energy, is treated separately, because this can be done much more simply --- $\nabla$ is local in
space, while $|\nabla|^{1+\eps}$ is~not.

Sections~\ref{S:no cascade} and~\ref{S:no soliton} use the finiteness of the energy to show, respectively, that
frequency-cascade and soliton-like solutions to \eqref{nlw} are not possible.
We show that the frequency-cascade solution is inconsistent with the conservation of energy; of course, this would be
meaningless had we not first proved that the energy is finite.  The existence of solitons is precluded by use of the
Morawetz identity (cf. \cite{Morawetz75,MorawetzStrauss}):
\begin{equation*}
\begin{aligned}
&\frac{d\ }{dt} \int_{\R^3} -a_j(x)u_t(t,x) u_j(t,x) - \tfrac12 a_{jj}(x) u(t,x) u_t(t,x) \,dx \\
    ={}& \int_{\R^3} a_{jk}(x) u_j(t,x)u_k(t,x) + \tfrac{p}{2p+4} a_{jj}(x) u(t,x)^{p+2} - \tfrac12a_{jjkk}(x) u(t,x)^2 \,dx
\end{aligned}
\end{equation*}
where $u$ is a solution to \eqref{nlw}, subscripts indicate partial derivatives, and repeated indices are summed.  More
precisely, we use the special case $a(x)=|x|$, which together with the Fundamental Theorem of Calculus and Hardy's
inequality yields
\begin{equation}\label{Morawetz}
\int_I \int_{\R^3} \frac{|u(t,x)|^{p+2}}{|x|} \,dx\,dt \lesssim \|\ntx u\|_{L^\infty_tL^2_x(I\times\R^3)} ^2.
\end{equation}
Notice that by finite speed of propagation, the left-hand side should grow logarithmically in time.

The finite-time blowup solution is precluded in Section~\ref{S:ftb} and does rely on Sections~\ref{S:subluminal} through~\ref{S:finite E}.
Like the frequency-cascade, this type of solution is inconsistent with the conservation of energy; finiteness of the energy in this case
follows from the fact that finite-time blowup solutions are compactly supported at each time.  The idea of using a second (non-critical) conservation law to control
the growth/decay of $N(t)$ originates in the study of NLS (cf. \cite[\S4]{borg:scatter}); in this paper, the assumed boundedness of
the critical Sobolev norm acts as a first conservation law.

\subsection*{Acknowledgements}
The first author was supported by NSF grant DMS-0701085.  The second author was supported by NSF grant DMS-0901166.

%
%
%
%

\section{Notation and useful lemmas}

We write $X\lesssim Y$ to indicate that $X\leq C Y$ for some constant $C$, which may change from line to line.
Dependencies will be indicated with subscripts, for example, $X\lesssim_u Y$.  We will write $X\sim Y$ to indicate that $X\lesssim Y\lesssim X$.

Let $\varphi(\xi)$ be a radial bump function supported in the ball $\{ \xi \in \R^d: |\xi| \leq \tfrac {11}{10} \}$ and equal to
$1$ on the ball $\{ \xi \in \R^d: |\xi| \leq 1 \}$.  For each number $N > 0$, we define the Fourier multipliers
\begin{align*}
\widehat{P_{\leq N} f}(\xi) &:= \varphi(\xi/N) \hat f(\xi)\\
\widehat{P_{> N} f}(\xi) &:= (1 - \varphi(\xi/N)) \hat f(\xi)\\
\widehat{P_N f}(\xi) &:= (\varphi(\xi/N) - \varphi(2\xi/N)) \hat f(\xi)
\end{align*}
and similarly $P_{<N}$ and $P_{\geq N}$.  We also define
$$ P_{M < \cdot \leq N} := P_{\leq N} - P_{\leq M} = \sum_{M < N' \leq N} P_{N'}$$
whenever $M < N$.  We will only have cause to use these multipliers when $M$ and $N$ are \emph{dyadic numbers} (that is, of the form $2^n$
for some integer $n$); in particular, all summations over $N$ or $M$ are understood to be over dyadic numbers.

Like all Fourier multipliers, the Littlewood-Paley operators commute with derivatives and the propagator.  We will only need the
basic properties of these operators, particularly,

\begin{lemma}[Bernstein estimates]\label{Bernstein}
 For $1 \leq p \leq q \leq \infty$,
\begin{align*}
\bigl\| |\nabla|^{\pm s} P_N f\bigr\|_{L^p_x} &\sim N^{\pm s} \| P_N f \|_{L^p_x},\\
\|P_{\leq N} f\|_{L^q_x} &\lesssim N^{\frac{3}{p}-\frac{3}{q}} \|P_{\leq N} f\|_{L^p_x},\\
\|P_N f\|_{L^q_x} &\lesssim N^{\frac{3}{p}-\frac{3}{q}} \| P_N f\|_{L^p_x}.
\end{align*}
\end{lemma}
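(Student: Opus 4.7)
The approach is the standard Littlewood--Paley calculus: realize each operator as convolution against a rescaled Schwartz kernel and apply Young's inequality.

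First, I would address the $L^p_x \to L^q_x$ bounds in lines (ii) and (iii). Set $K := \check\varphi \in \mathcal{S}(\R^3)$ and $K_N(x) := N^3 K(Nx)$, which is the convolution kernel of $P_{\leq N}$. Since $K$ is Schwartz, $\|K\|_{L^r_x}$ is finite for every $1 \leq r \leq \infty$, and a change of variables gives $\|K_N\|_{L^r_x} \sim N^{3(1-1/r)}$. If $g := P_{\leq N}f$, then $\hat g$ is supported in $\{|\xi| \leq \tfrac{11}{10}N\}$, so we may pick a fattened bump $\tilde\varphi$ equal to $1$ on that ball and set $\tilde K_N(x) := N^3 \check{\tilde\varphi}(Nx)$; by construction $g = \tilde K_N * g$. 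Young's inequality with $\tfrac1r = 1 + \tfrac1q - \tfrac1p$ then gives
\begin{equation*}
\|g\|_{L^q_x} \leq \|\tilde K_N\|_{L^r_x}\, \|g\|_{L^p_x} \lesssim N^{3/p - 3/q}\, \|g\|_{L^p_x},
\end{equation*}
which is (ii). The identical argument applied to the annular symbol $\varphi(\xi)-\varphi(2\xi)$ gives (iii).

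For the derivative bounds in line (i), given $g = P_N f$, the Fourier support of $g$ lies in $\{\tfrac{N}{2} \leq |\xi| \leq \tfrac{11}{10}N\}$. Pick an auxiliary bump $\eta \in C^\infty_c(\R^3 \setminus\{0\})$ equal to $1$ on that annulus and write
\begin{equation*}
|\xi|^{\pm s} \hat g(\xi) = N^{\pm s}\, m_{\pm}(\xi/N)\, \hat g(\xi), \qquad m_{\pm}(\zeta) := |\zeta|^{\pm s} \eta(\zeta).
\end{equation*}
Since $m_{\pm} \in \mathcal{S}(\R^3)$, its inverse Fourier transform is Schwartz and in particular lies in $L^1_x$ with norm independent of $N$. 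Hence $|\nabla|^{\pm s} P_N f$ equals $N^{\pm s}$ times convolution against an $L^1_x$ kernel of norm $O(1)$ applied to $g$, which yields $\| |\nabla|^{\pm s} P_N f\|_{L^p_x} \lesssim N^{\pm s} \|P_N f\|_{L^p_x}$ after another use of Young. The reverse inequality in (i) follows by applying the $\pm s \mapsto \mp s$ version of the same estimate to $|\nabla|^{\pm s} P_N f$, whose Fourier support remains in the same annulus.

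Nothing in this proof is deep; the entire content is Young's inequality for rescaled Schwartz kernels. I do not anticipate a genuine obstacle. The only subtle point is that the multiplier $|\xi|^{\pm s}$ must never be applied at the origin, which is why the fattened annular cutoff $\eta$ is inserted before extracting the $N^{\pm s}$ scaling factor; correspondingly, the hypothesis that the estimate in (i) is stated for $P_N$ (not $P_{\leq N}$) is essential.
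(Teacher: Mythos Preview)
The paper states Lemma~\ref{Bernstein} without proof, treating it as a standard fact of Littlewood--Paley theory. Your argument is correct and is exactly the canonical proof: realize each operator as convolution with a rescaled Schwartz kernel and apply Young's inequality, using an annular cutoff to handle $|\xi|^{\pm s}$ away from the origin.
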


In three space dimensions, the wave equation obeys the strong form of the Huygens principle.  This is most easily expressed
in terms of the explicit form of the propagator:

\begin{lemma}\label{L:propagator} For Schwartz functions $f$,
\begin{equation}\label{E:propagator}
\Bigl[ \tfrac{\sin(t\sqrtDelta)}{\sqrtDelta} f \Bigr](x) = \tfrac1{4\pi t} \int_{|x-y|=t} f(y) \,dS(y)
\end{equation}
where $dS$ denotes the usual $2$-dimensional surface measure.
\end{lemma}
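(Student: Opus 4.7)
The plan is to identify the operator $\sin(t\sqrtDelta)/\sqrtDelta$ as convolution against a specific tempered distribution and to verify that this distribution is precisely the surface measure on $\{|y|=t\}$ normalized by $1/(4\pi t)$. To this end, first define the (finite positive) measure $\sigma_t$ on $\R^3$ by
\[
\langle \sigma_t,\varphi\rangle := \frac{1}{4\pi t}\int_{|y|=t}\varphi(y)\,dS(y).
\]
Then, for Schwartz $f$, a change of variable gives
\[
(f*\sigma_t)(x) = \frac{1}{4\pi t}\int_{|y|=t}f(x-y)\,dS(y) = \frac{1}{4\pi t}\int_{|x-y|=t}f(y)\,dS(y),
\]
so it suffices to show that $[\sin(t\sqrtDelta)/\sqrtDelta]f = f*\sigma_t$, or equivalently (as tempered distributions) that $\widehat{\sigma_t}(\xi) = \sin(t|\xi|)/|\xi|$.

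First, I would compute $\widehat{\sigma_t}$ directly. By rotation invariance it suffices to evaluate at $\xi=|\xi|e_3$. Parametrising $\{|y|=t\}$ by polar coordinates $(\phi,\theta)$ with polar axis along $\xi$ gives $dS(y)=t^2\sin\phi\,d\phi\,d\theta$ and $y\cdot\xi=t|\xi|\cos\phi$, whence, after the substitution $u=\cos\phi$ in the inner integral,
\[
\widehat{\sigma_t}(\xi) = \frac{t}{4\pi}\int_0^{2\pi}\!\!\int_0^\pi e^{-it|\xi|\cos\phi}\sin\phi\,d\phi\,d\theta = \frac{t}{4\pi}\cdot 2\pi\cdot\frac{2\sin(t|\xi|)}{t|\xi|} = \frac{\sin(t|\xi|)}{|\xi|}.
\]

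Second, having identified $\widehat{\sigma_t}$, I would conclude by Fourier uniqueness: the bounded continuous symbol $\sin(t|\xi|)/|\xi|$ defines a tempered multiplier, and convolution against the finite measure $\sigma_t$ maps the Schwartz class into bounded continuous functions, so both sides of \eqref{E:propagator} are continuous functions whose Fourier transforms equal $\sin(t|\xi|)|\xi|^{-1}\widehat{f}(\xi)$, and hence they agree pointwise.

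The only nontrivial step is the spherical-measure Fourier transform computation above; everything else is standard bookkeeping. An equivalent route would be to observe that both sides of \eqref{E:propagator}, viewed as functions of $(t,x)$, solve the wave equation $w_{tt}=\Delta w$ with Cauchy data $w(0,\cdot)=0$, $w_t(0,\cdot)=f$ (the right-hand side being the classical Kirchhoff formula) and then invoke uniqueness of $C^2$ solutions with prescribed Cauchy data; this avoids the explicit Fourier computation but replaces it with verifying the initial conditions and the PDE by differentiation under the integral sign.
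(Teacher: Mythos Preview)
Your argument is correct: the computation of the Fourier transform of the normalized spherical measure is carried out accurately, and the conclusion via Fourier uniqueness is sound. Note, however, that the paper does not actually supply a proof of this lemma; it is stated as the classical explicit form of the three-dimensional wave propagator (Kirchhoff's formula) and taken as known, so there is no proof in the paper to compare your approach against.
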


To be absolutely clear about the normalization here, we note that if $f\equiv 1$, then
$\text{RHS\eqref{E:propagator}}\equiv t$. A well-known consequence of \eqref{E:propagator} is the following:

\begin{lemma}[Dispersive estimate]\label{L:Dispersive estimate}
For $2\leq q \leq \infty$ and $f\in L^{q'}(\R^3)$,
$$
 \Bigl\| \sqrtDelta^{-1}\sin(t\sqrtDelta) f \Bigr\|_{L^q(\R^3)} \lesssim
    |t|^{-(1-\frac{2}{q})} \bigl\| |\nabla|^{-\frac4q} \nabla f \bigr\|_{L^{q'}(\R^3)}
$$
\end{lemma}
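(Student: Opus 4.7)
The plan is to prove the endpoint cases $q=2$ and $q=\infty$ directly and then interpolate. The $q=2$ case is immediate from Plancherel's theorem:
\begin{equation*}
\bigl\|\sqrtDelta^{-1}\sin(t\sqrtDelta)f\bigr\|_{L^2}^2 = \int_{\R^3}\frac{\sin^2(t|\xi|)}{|\xi|^2}\bigl|\hat f(\xi)\bigr|^2\,d\xi \leq \bigl\||\nabla|^{-1}f\bigr\|_{L^2}^2 = \bigl\||\nabla|^{-2}\nabla f\bigr\|_{L^2}^2,
\end{equation*}
where the final identity rewrites $|\nabla|^{-1}\nabla$ as the vector Riesz transform, whose components satisfy $\sum_j R_j^* R_j = I$ on $L^2$.

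For $q=\infty$ I exploit the explicit representation in Lemma~\ref{L:propagator}. Parametrize the sphere $\{|y-x|=t\}$ as $y=x+t\omega$ with $\omega\in S^2$ and apply the fundamental theorem of calculus in the radial direction, $f(x+t\omega)=-\int_t^\infty \omega\cdot\nabla f(x+r\omega)\,dr$ (valid for Schwartz $f$ and extended by density to $\nabla f\in L^1$). Substituting and converting back to a volume integral over $\{|y-x|>t\}$ yields
\begin{equation*}
\bigl|\sqrtDelta^{-1}\sin(t\sqrtDelta)f(x)\bigr| \leq \tfrac{t}{4\pi}\int_{|y-x|>t}\tfrac{|\nabla f(y)|}{|y-x|^2}\,dy \leq \tfrac{1}{4\pi t}\|\nabla f\|_{L^1}.
\end{equation*}

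For intermediate $q\in(2,\infty)$ I interpolate. The two endpoints read $\sqrtDelta^{-1}\sin(t\sqrtDelta):\dot W^{-1,2}\to L^2$ with norm $\lesssim 1$ and $\sqrtDelta^{-1}\sin(t\sqrtDelta):\dot W^{1,1}\to L^\infty$ with norm $\lesssim t^{-1}$. Complex interpolation of Sobolev scales produces $\sqrtDelta^{-1}\sin(t\sqrtDelta):\dot W^{1-4/q,\,q'}\to L^q$ with norm $\lesssim t^{-(1-2/q)}$; the source norm $\||\nabla|^{1-4/q}f\|_{L^{q'}}$ is equivalent to $\||\nabla|^{-4/q}\nabla f\|_{L^{q'}}$ by $L^{q'}$-boundedness of Riesz transforms for $1<q'<\infty$, and the two agree tautologically at $q=\infty$. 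The main subtlety is the $L^1$ endpoint, since $|\nabla|^{iy}$ fails to act boundedly on $L^1$ (or $L^\infty$), so a direct application of Stein's theorem between $\dot W^{-1,2}$ and $\dot W^{1,1}$ is delicate; I would sidestep this by working frequency-by-frequency, establishing the dyadic bound $\bigl\|P_N\sqrtDelta^{-1}\sin(t\sqrtDelta)f\bigr\|_{L^q}\lesssim t^{-(1-2/q)}N^{1-4/q}\|P_N f\|_{L^{q'}}$ via Riesz--Thorin at fixed $N$ (with Bernstein converting $\|\nabla P_N f\|_{L^1}$ into $N\|P_N f\|_{L^1}$) and then reassembling via the Littlewood--Paley square-function characterization of homogeneous Sobolev norms.
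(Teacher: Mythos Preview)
Your endpoint arguments ($q=2$ via Plancherel, $q=\infty$ via the explicit propagator and the fundamental theorem of calculus in the radial direction) are essentially identical to the paper's.  The divergence is in the interpolation step: the paper invokes analytic interpolation directly, noting that the $L^1\to L^\infty$ endpoint forces one through BMO and its interpolation theory (citing Stein, \S IV.5), whereas you correctly identify this difficulty and sidestep it by a Littlewood--Paley argument.  Your route is sound: the dyadic bound $\|P_N |\nabla|^{-1}\sin(t|\nabla|)f\|_{L^q}\lesssim t^{-(1-2/q)}N^{1-4/q}\|P_N f\|_{L^{q'}}$ follows from Riesz--Thorin at fixed $N$, and for $2<q<\infty$ one reassembles via Minkowski ($L^q\ell^2\hookleftarrow \ell^2 L^q$ on the output side, $\ell^2 L^{q'}\hookleftarrow L^{q'}\ell^2$ on the input side) together with the square-function characterization of $L^q$ and $\dot W^{1-4/q,q'}$, both valid since $1<q'<q<\infty$.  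The paper's approach is terser but relies on heavier machinery; yours is more self-contained at the cost of a few extra lines.
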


\begin{proof}
For $q=2$, the result reduces to the boundedness of Fourier multiplier $\sin(t|\xi|)$.  When $q=\infty$, the desired
estimate is
$$
 \bigl\| \sqrtDelta^{-1}\sin(t\sqrtDelta) f \bigr\|_{L^\infty(\R^3)} \lesssim_q |t|^{-1} \| \nabla f\|_{L^{1}(\R^3)},
$$
which follows easily from \eqref{E:propagator} and
$$
\int_{|\omega|=1} f(t\omega) \,dS(\omega) = \int_t^\infty \int_{|\omega|=1} -\omega\cdot\nabla f(r\omega) \,dS(\omega)\,dr
    \leq t^{-2} \|\nabla f\|_{L^1(\R^3)}.
$$
For general $q$ one may apply the theory of analytic interpolation.  We caution the reader, however, that this requires
the use of BMO and its interpolation theory; see, for example, \cite[\S IV.5]{stein:large}, which uses a very closely related
estimate as the motivating example.
\end{proof}

Note that even when $q=\infty$, this gives only $t^{-1}$ decay. This is not integrable in time and so insufficient to prove
convergence of the Duhamel formulae \eqref{Duhamel}.

As is now well understood,  the dispersive estimate forms the basis for proving Strichartz estimates, which we record next.
See \cite{tao:keel,Pecher, Strichartz77} and the references therein for further information.

\begin{lemma}[Strichartz estimates]\label{L:Strichartz}
Let $I$ be a compact time interval and let $u: I\times\R^3 \to \mathbb \R$ be a solution to the forced wave equation
$$
u_{tt}- \Delta u + F = 0.
$$
Then for any $t_0\in I$ and $6\leq q < \infty$,
\begin{align*}
\|u\|_{L_t^\infty \dot H^{s_c}_x} + \|u_t\|_{L_t^\infty \dot H^{s_c-1}_x} &+ \|u\|_{L_{t,x}^{2p}}
    + \|u\|_{L_t^{p/2} L_x^\infty} + \|u\|_{L_t^\infty L_x^{3p/2}} \\
    &+ \|\nabla u\|_{L_t^\frac{4p}{p-4} L_x^\frac{4p}{p+4}} + \|\nabla u\|_{L_t^\infty L_x^\frac{3p}{p+2}}  \\
& {\mkern -100mu} \lesssim \|u(t_0)\|_{\dot H^{s_c}_x} + \|u_t(t_0)\|_{\dot H^{s_c-1}_x} + \bigl\| |\nabla|^{s_c} F\bigr\|_{L_t^{\frac{2q}{q+6}}L_x^{\frac q{q-1}}},
\end{align*}
where all spacetime norms are on $I\times\R^3$.
\end{lemma}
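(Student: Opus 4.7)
The plan is to assemble the stated inequality from the sharp Strichartz estimates for the free wave equation in three dimensions, Sobolev embedding in space, and the Christ--Kiselev lemma to pass from homogeneous bounds to retarded Duhamel bounds. Each norm on the left-hand side will come either directly from a wave Strichartz estimate on an admissible pair at the appropriate regularity, or via Sobolev embedding applied to an $L^\infty_t \dot H^\bullet_x$ control I have already obtained.

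I begin by cataloguing the admissible triples $(a,b;\sigma)$ I need. Recall that in three dimensions a pair is wave-admissible at regularity $\sigma$ when $2\le a\le\infty$, $2\le b<\infty$, $\tfrac{1}{a}+\tfrac{1}{b}\le\tfrac{1}{2}$, and the scaling identity $\tfrac{1}{a}+\tfrac{3}{b}=\tfrac{3}{2}-\sigma$ holds; the spatial endpoint $b=\infty$ is allowed whenever $\tfrac{1}{a}<\tfrac{1}{2}$ by Pecher's theorem. One checks that $(2p,2p;s_c)$ and $(p/2,\infty;s_c)$ are admissible (the latter because $p>4$ forces $\tfrac{2}{p}<\tfrac{1}{2}$), giving $\|u\|_{L^{2p}_{t,x}}$ and $\|u\|_{L^{p/2}_t L^\infty_x}$, and that the sharp pair $(4p/(p-4),4p/(p+4);s_c-1)$, applied to $\nabla u$, yields $\|\nabla u\|_{L^{4p/(p-4)}_t L^{4p/(p+4)}_x}$. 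The $L^\infty_t$ norms at critical regularity are the standard energy bound for the free propagator, and the Sobolev embeddings $\dot H^{s_c}\hookrightarrow L^{3p/2}$ and $\dot H^{s_c-1}\hookrightarrow L^{3p/(p+2)}$ take care of the remaining two $L^\infty_t L^\bullet_x$ terms.

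For the inhomogeneous source, I apply Christ--Kiselev to convert the homogeneous bounds above into estimates on the retarded Duhamel operator $\int_{t_0}^t \sqrtDelta^{-1}\sin((t-s)\sqrtDelta)F(s)\,ds$, with source controlled in $\||\nabla|^{s_c-1+\tilde\rho}F\|_{L^{\tilde a'}_t L^{\tilde b'}_x}$ for any dual admissible triple $(\tilde a,\tilde b;\tilde\rho)$. Taking $(\tilde a,\tilde b;\tilde\rho)=(2q/(q-6),\,q;\,1)$ recovers exactly the stated norm $\||\nabla|^{s_c}F\|_{L^{2q/(q+6)}_t L^{q/(q-1)}_x}$, and one verifies both admissibility, $\tfrac{q-6}{2q}+\tfrac{1}{q}=\tfrac{q-4}{2q}\le\tfrac{1}{2}$, and scaling, $\tfrac{q-6}{2q}+\tfrac{3}{q}=\tfrac{1}{2}=\tfrac{3}{2}-1$, for every $6\le q<\infty$. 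The hypothesis $\tilde a'<a$ needed to apply Christ--Kiselev is automatic, since $\tilde a'=2q/(q+6)\in[1,2)$ while each of the LHS exponents satisfies $a\ge p/2>2$ in view of $p>4$.

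The main delicate point I anticipate is the $L^{p/2}_t L^\infty_x$ endpoint, where the dispersive estimate of Lemma~\ref{L:Dispersive estimate} provides only the nonintegrable $|t|^{-1}$ decay. This is exactly where the BMO-based analytic interpolation flagged in the proof of that lemma (equivalently, Pecher's theorem) is needed; since $p>4$ keeps us strictly away from the forbidden threshold $\tfrac{1}{a}=\tfrac{1}{2}$, no further work is needed beyond citing Pecher. Everything else reduces to bookkeeping of admissibility and scaling and can be read off from the sharp bilinear form of Keel--Tao together with Sobolev embedding.
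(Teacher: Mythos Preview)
Your proposal is correct and follows the standard route; the paper itself gives no proof of this lemma, merely citing \cite{tao:keel,Pecher,Strichartz77} as background, so your explicit verification of admissibility and scaling for each pair and your use of Christ--Kiselev for the retarded Duhamel piece is precisely the argument those references supply, spelled out in more detail than the paper bothers with. Your identification of the $L^{p/2}_t L^\infty_x$ term as the only delicate point is apt: one clean way to justify it with Sobolev (rather than Besov) data is to real-interpolate the frequency-localized Besov-endpoint estimates at two nearby time exponents, landing in $L^{p/2,2}_t L^\infty_x \hookrightarrow L^{p/2}_t L^\infty_x$ since $p/2>2$.
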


As noted in the Introduction, much of the argument presented here was inspired by work on NLS.  The most favourable
difference between NLW and NLS is that NLW enjoys finite speed of propagation.  Unfortunately, we will need to deal
with a non-integer number of derivatives, which is inherently a non-local operator.  To cope with this, we will make use
of the following:

\begin{lemma}[Mismatch estimates]\label{L:mismatch}
Let $\phi_1$ and $\phi_2$ be (smooth) functions obeying
$$
|\phi_j| \leq 1 \qtq{and} \dist( \supp \phi_1,\, \supp \phi_2 ) \geq A,
$$
for some large constant $A$.  Then for $\sigma>0$ and $1\leq p\leq q\leq \infty$,
\begin{align}\label{mismatch est}
\bigl\| \phi_1 |\nabla|^\sigma P_{\leq 1} (\phi_2 f) \bigr\|_{L^q_x(\R^3)}
    + \bigl\| \phi_1 \nabla |\nabla|^{\sigma-1} P_{\leq 1} (\phi_2 f) \bigr\|_{L^q_x(\R^3)}
    \lesssim A^{-\sigma-\frac3p + \frac3q} \|\phi_2 f\|_{L^p_x(\R^3)}
\end{align}
\end{lemma}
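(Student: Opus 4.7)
The plan is to reduce the claim to a pointwise estimate on the convolution kernel of the operators $|\nabla|^\sigma P_{\le 1}$ and $\nabla |\nabla|^{\sigma-1} P_{\le 1}$, and then apply Young's convolution inequality exploiting the separation of the supports of $\phi_1$ and $\phi_2$.

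First, I would set $m(\xi) = |\xi|^\sigma \varphi(\xi)$ (or $\xi_j |\xi|^{\sigma-1}\varphi(\xi)$ for the second operator, which has the same qualitative features) and write the operator as convolution against $K = \mathcal F^{-1} m$. The goal is to show
\[
|K(x)| \lesssim (1+|x|)^{-3-\sigma}.
\]
Since $m \in L^1(\R^3)$, one trivially has $|K(x)| \lesssim 1$, so only the region $|x|\ge 1$ requires work. For $|x|\ge 1$, I would decompose $m(\xi) = m(\xi)\varphi(|x|\xi) + m(\xi)(1-\varphi(|x|\xi))$. The first piece lives in $|\xi|\lesssim |x|^{-1}$, and since $|m(\xi)|\lesssim |\xi|^\sigma$ its inverse Fourier transform is bounded by the $L^1$-norm, giving $|x|^{-3-\sigma}$. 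The second piece is smooth away from $\xi=0$ and supported where $|\xi|\gtrsim |x|^{-1}$, so repeated integration by parts in $\xi$, keeping track of $|\partial_\xi^\alpha m(\xi)|\lesssim |\xi|^{\sigma-|\alpha|}$ on the support, produces the same bound $|x|^{-3-\sigma}$ after optimizing the number of integrations against the scale $|x|^{-1}$. This is the main technical step and the only serious obstacle; it fixes the decay exponent that drives the final answer.

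Next, I would use the separation of supports. If $x\in\supp\phi_1$ and $y\in\supp\phi_2$ then $|x-y|\ge A$, so
\[
\bigl|\phi_1(x)\bigl(K*(\phi_2 f)\bigr)(x)\bigr|
\le \int_{|x-y|\ge A}|K(x-y)|\,|\phi_2 f(y)|\,dy
\lesssim \bigl(\mathbf 1_{|\cdot|\ge A}|\cdot|^{-3-\sigma}\bigr)*|\phi_2 f|(x).
\]
By Young's inequality with $\tfrac1r = 1 - \tfrac1p + \tfrac1q \in[0,1]$,
\[
\bigl\|\phi_1 |\nabla|^\sigma P_{\le 1}(\phi_2 f)\bigr\|_{L^q_x}
\lesssim \bigl\|\mathbf 1_{|\cdot|\ge A}|\cdot|^{-3-\sigma}\bigr\|_{L^r_x}\,\|\phi_2 f\|_{L^p_x}.
\]
Since $\sigma>0$ we have $(3+\sigma)r>3$ for every admissible $r\ge 1$, so a direct computation in polar coordinates yields
\[
\bigl\|\mathbf 1_{|\cdot|\ge A}|\cdot|^{-3-\sigma}\bigr\|_{L^r_x}\sim A^{\frac 3r-3-\sigma}
=A^{-\sigma-\frac 3p+\frac 3q},
\]
which is exactly the claimed bound.

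For the second operator $\nabla|\nabla|^{\sigma-1}P_{\le 1}$, the same argument applies verbatim once one notes that the symbol $\xi_j|\xi|^{\sigma-1}\varphi(\xi)$ has the same size and the same derivative bounds near the origin as $|\xi|^\sigma\varphi(\xi)$, so its kernel obeys the identical pointwise decay $(1+|x|)^{-3-\sigma}$.
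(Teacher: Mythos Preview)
Your proof is correct and follows essentially the same approach as the paper: both obtain the kernel bound $|K(x)|\lesssim |x|^{-3-\sigma}$ for the multipliers $|\nabla|^\sigma P_{\le 1}$ and $\nabla|\nabla|^{\sigma-1}P_{\le 1}$, then restrict the convolution to $|x-y|\ge A$ using the support separation and finish with Young's inequality. The paper simply asserts the kernel decay as ``elementary computations,'' whereas you supply the standard dyadic-in-frequency argument; otherwise the two proofs are identical.
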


\begin{proof}
Elementary computations show that if $K(x)$ denotes the convolution kernel associated to either of the Fourier multipliers
$|\nabla|^{\sigma} P_{\leq1}$ or $\nabla |\nabla|^{\sigma-1} P_{\leq1}$, then $|K(x)|\lesssim |x|^{-3-\sigma}$.

Noting that
$$
\int_A^\infty  r^{-(3+\sigma) \tilde q} r^2\,dr \lesssim_{\tilde q} A^{-\tilde q(\sigma + \frac3p - \frac3q)}
    \qtq{when} \tfrac1p+\tfrac1{\tilde q} = 1 + \tfrac1q,
$$
the result follows from Young's inequality.
\end{proof}

As discussed in the introduction, our next proposition shows that waves do not diffract too much through a large aperture.  We are content
to show that there is sufficient decay in \eqref{E:weakD} below and have made no attempt to find the optimal bound.

\begin{proposition}[Weak diffraction]\label{P:weakD}
Let $\phi:\R^3\to [0,1]$ be a smooth compactly supported function such that $\phi(x)=1$ for $|x|<1$ and $\phi(x)=0$ for $|x|>\tfrac{11}{10}$.
Also, let $\theta:\R^3\to [0, \infty)$ be defined by
\begin{align}\label{theta}
\theta(\xi) = \prod_{j=1}^3 \Bigl(\frac{\sin(\xi_j)}{\xi_j}\Bigr)^{4}.
\end{align}
Then
\begin{align}\label{E:weakD}
&\Bigl|\iint \bigl\langle \nabla \tfrac{\sin(|\nabla|t)}{|\nabla|} \theta(i\nabla) F(t),\;\phi\bigl(\tfrac{\cdot}{R}\bigr)
    \nabla \tfrac{\sin(|\nabla|\tau)}{|\nabla|}\theta(i\nabla) G(\tau)\bigr\rangle\,d\tau\,dt\\
&+\iint \bigl\langle \cos(|\nabla|t)\theta(i\nabla) F(t),\;\phi\bigl(\tfrac{\cdot}{R}\bigr)
    \cos(|\nabla|\tau)\theta(i\nabla) G(\tau)\bigr\rangle\,d\tau\,dt\Bigr|
\lesssim R^{-\frac1{10}} \| F \|_{L^\infty_t L^1_x} \| G \|_{L^\infty_\tau L^1_y},\notag
\end{align}
provided $F(t,x)$ and $G(\tau,y)$ are supported where
\begin{align}\label{hyp}
|t|+|\tau|+|x|+|y|\lesssim R \quad \text{and} \quad \frac{|t-\tau|-|x-y|}R\gtrsim 1
\end{align}
for any (large) constant $R$.
\end{proposition}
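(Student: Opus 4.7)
The plan is to extract a cancellation from the combined $\sin\sin+\cos\cos$ structure in \eqref{E:weakD}. Using the identity $\sin(t|\xi|)\sin(\tau|\xi|)+\cos(t|\xi|)\cos(\tau|\xi|)=\cos((t-\tau)|\xi|)$ (which kills the $\cos((t+\tau)|\xi|)$ pieces appearing in each summand individually), one checks that if $\phi(\cdot/R)$ were replaced by $1$, the expression collapses via Plancherel (and integration by parts moving the gradient pair into $|\nabla|^2$) to
\[
\iint\!\int\bigl[\cos((t-\tau)|\nabla|)\theta(i\nabla)^2 F(t)\bigr](x)\,G(\tau,x)\,dx\,dt\,d\tau.
\]
By Lemma \ref{L:propagator} and the compact support of $\check\theta$, the kernel of $\cos(s|\nabla|)\theta(i\nabla)^2$ is supported in the $O(1)$-thick shell $\{w:||w|-|s||\lesssim 1\}$; the hypothesis \eqref{hyp} then forces $|t-\tau|-|x-y|\gtrsim R\gg 1$ for any $(t,y)\in\supp F$, $(\tau,x)\in\supp G$, which contradicts $||x-y|-|t-\tau||\lesssim 1$ and makes this ``leading'' contribution vanish identically. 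All of the decay in $R$ must therefore come from the correction introduced by the cutoff.

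To quantify this correction I would pass to Fourier and write $S=(2\pi)^{-6}\int\hat\phi(-\zeta)\,M(\zeta/R)\,d\zeta$, where $M(\mu)=\iiint K(\xi,\mu,t,\tau)\,\hat F(t,\xi)\,\hat G(\tau,-\xi-\mu)\,d\xi\,dt\,d\tau$ with symbol
\[
K(\xi,\mu,t,\tau)=\theta(\xi)\theta(\xi+\mu)\Bigl[-\tfrac{\xi\cdot(\xi+\mu)}{|\xi|\,|\xi+\mu|}\sin(t|\xi|)\sin(\tau|\xi+\mu|)+\cos(t|\xi|)\cos(\tau|\xi+\mu|)\Bigr].
\]
The argument above gives $M(0)=0$. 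Taylor-expanding $K$ in $\mu$, the angular factor equals $1+O(|\mu|^2/|\xi|^2)$ (constant along $\xi$, quadratic in the perpendicular direction) and contributes \emph{no} linear term, so the $O(|\mu|)$ part of $M(\mu)-M(0)$ comes only from $\theta(\xi+\mu)-\theta(\xi)$, from $\partial_\mu\sin(\tau|\xi+\mu|),\partial_\mu\cos(\tau|\xi+\mu|)$ (each costing at most $|\tau|\lesssim R$), and from the Fourier shift $\hat G(\tau,-\xi-\mu)$ (which, by integration by parts in $y$, costs $\|y\,G(\tau)\|_{L^1_y}\leq R\|G\|_{L^\infty_\tau L^1_y}$). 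Combined with $\|\hat F(t,\cdot)\|_{L^\infty}\leq\|F(t)\|_{L^1_x}$, the integrability of $\theta^2$ in $\xi$, and the time support $|t|,|\tau|\lesssim R$, this yields $|M(\mu)|\lesssim |\mu|\,R^\beta\,\|F\|_{L^\infty_tL^1_x}\|G\|_{L^\infty_\tau L^1_y}$ for some explicit $\beta$.

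Inserting $\mu=\zeta/R$ and integrating against the rapidly decaying $\hat\phi$ produces a preliminary bound $|S|\lesssim R^{\beta-1}\|F\|_{L^\infty_tL^1_x}\|G\|_{L^\infty_\tau L^1_y}$. To reach the stated $R^{-1/10}$, I would iterate the Taylor expansion: successive $\mu$-derivatives of $M$ at $0$ split, after inverse Fourier transform, into terms that retain the $\cos((t-\tau)|\nabla|)$ propagator (and so still vanish by the support argument, now with polynomial weights in $x$ on $G(\tau)$) plus genuinely new contributions of the form $\sin((t\pm\tau)|\nabla|)\theta^2 F(t)$ that the support hypothesis does not annihilate. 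The new pieces are estimated directly via the dispersive kernel bounds $|L_t|,|K_t|\lesssim |t|^{-1}$ on the thin shells $\{||w|-|t||\lesssim 1\}$ (obtained from Lemma \ref{L:propagator} together with Lemmas \ref{Bernstein} and \ref{L:mismatch}) combined with the smallness of the shell intersection with $\{|x|\geq R\}$. The main obstacle is the polynomial bookkeeping in $R$: each extra $\mu$-derivative costs at most one factor of $R$ while each order of vanishing at $\mu=0$ supplies one factor of $1/R$, and one must balance a modest number of iterations to yield a positive final exponent; the clearly non-sharp $-1/10$ simply reflects the authors' willingness to settle for \emph{any} polynomial decay.
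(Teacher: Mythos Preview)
Your leading observation is sound and matches the paper's: the combination $\sin\sin+\cos\cos$ produces a $\cos((t-\tau)|\xi|)$ piece (the paper's $K_1$) that would vanish identically by Huygens were the cutoff absent, plus an angular-correction piece proportional to $1-\tfrac{\xi\cdot\eta}{|\xi||\eta|}$ (the paper's $K_2$), and you correctly note the latter is $O(|\mu|^2/|\xi|^2)$ (in the paper's variables, $O(|\nu|^2/|\mu|^2)$). However, your Taylor-expansion strategy for extracting decay from the cutoff has a genuine gap.

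The difficulty is in your final paragraph. Each $\mu$-derivative of the propagator factors costs a full power of $R$ (via $|\tau|\lesssim R$ or, after undoing Fourier, via $|y|\lesssim R$), while each order of the expansion buys back only one power of $1/R$. Your hope is that most Taylor coefficients at $\mu=0$ retain the $\cos((t-\tau)|\nabla|)$ structure and vanish by Huygens, with only small ``new'' pieces left over. But the new pieces are not small: already at first order one picks up $\tau\sin((t+\tau)|\xi|)$ from $\partial_\mu\cos(\tau|\xi+\mu|)$, and the support hypothesis says nothing about $|t+\tau|$. The dispersive bound $|t+\tau|^{-1}$ you invoke is not integrable over the square $|t|,|\tau|\lesssim R$ (it yields $\iint|\tau|\,|t+\tau|^{-1}\,dt\,d\tau\sim R^2\log R$), and your remark about ``shell intersection with $\{|x|\geq R\}$'' seems to misread the cutoff: $\phi(\cdot/R)$ localizes to $|x|\lesssim R$, not away from the origin. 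So after the first-order step the bound is $O(R^{\beta-1})$ with $\beta\geq 2$, and further iteration does not improve the balance.

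The paper extracts the decay by a different mechanism: oscillatory-integral estimates rather than Taylor expansion. After changing variables to $\mu=\tfrac{\xi+\eta}{2}$, $\nu=\tfrac{\xi-\eta}{2}$ and splitting into regions (small $|\mu|$; large $|\nu|$; and the main region $|\mu|\gtrsim R^{-7/9}$, $|\nu|\lesssim R^{-8/9}$), the $K_1$ piece is handled by \emph{non-stationary phase}: the hypothesis $|t-\tau|-|x-y|\gtrsim R$ forces $\bigl|\tfrac{\mu}{|\mu|}\cdot\nabla_{\!\mu}\varphi\bigr|\gtrsim 1$ for the phase $\varphi=\tfrac1R\bigl(t|\mu+\nu|-\tau|\mu-\nu|+\cdots\bigr)$, so repeated integration by parts in $\mu$ gains arbitrary powers of $R$. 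For $K_2$, the $O(|\nu|^2/|\mu|^2)$ smallness of the angular factor combined with $|\nu|\lesssim R^{-1}$ from $\check\phi(2R\nu)$ gives an $R^{-2}$ bound; this is just short of summable in $t,\tau$, and the extra saving comes from the \emph{Van der Corput lemma} applied to the $e^{i(t+\tau)|\mu|}$ phase, whose second derivative in a transverse direction is bounded below by $|t+\tau|/|\mu|$. Neither of these ingredients---non-stationary phase exploiting the gap $|t-\tau|-|x-y|\gtrsim R$, or Van der Corput for the $t+\tau$ phase---appears in your proposal, and I do not see how to close the argument without them.
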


\begin{proof}
LHS\eqref{E:weakD} can be rewritten as
$$
\iint \! \iint [K_1(t,x;\tau,y) + K_2(t,x;\tau,y)] F(t,x) G(\tau,y)\,dx\,dy\,d\tau\,dt,
$$
where $K_1$ and $K_2$ are the kernels
$$
K_1:=\Re \int_{\R^3}\int_{\R^3} e^{it|\xi|-i\tau|\eta|+ix\cdot\xi- iy\cdot\eta} R^3 \check{\phi}\bigl( R(\xi-\eta)\bigr)\theta(\xi)\theta(\eta)
    \,d\xi\,d\eta
$$
and
$$
K_2:= - \int_{\R^3}\int_{\R^3} \sin(t|\xi|)\sin(\tau|\eta|) R^3 \check{\phi}\bigl( R(\xi-\eta)\bigr) \theta(\xi)\theta(\eta)
    \bigl[ 1 - \tfrac{\xi\cdot\eta}{|\xi| |\eta|}\bigr] e^{ix\cdot\xi- iy\cdot\eta} \,d\xi\,d\eta.
$$

To prove the proposition, it suffices to estimate these kernels in $L^1_{t,\tau} L^\infty_{x,y}$ on the support of $F$ and $G$.  In view of \eqref{hyp},
this in turn can be effected by proving $R^{-21/10}$ bounds on the kernels in $L^\infty_{t,\tau,x,y}$, which is essentially what we will do.  As a first step, we change
variables to $\mu=\frac{\xi+\eta}2$ and $\nu=\frac{\xi-\eta}2$ and we split the integral into several pieces, first by introducing the cutoffs
$\phi(R^{7/9}\mu)$ and $1-\phi(R^{7/9}\mu)$, and then in the latter case, the cutoffs $\phi(R^{8/9}\nu)$ and $1-\phi(R^{8/9}\nu)$.

When the cutoff $\phi(R^{7/9}\mu)$ is present, we bring absolute values inside the integrals and bound the corresponding contribution (to $K_1$ or $K_2$) by
\begin{align}\label{A}
\lesssim \int_{\R^3}\int_{\R^3} \phi(R^{7/9}\mu) R^3 |\check{\phi}(2R\nu)| \,d\mu\, d\nu \lesssim R^{-7/3}.
\end{align}

When the factor $[1-\phi(R^{7/9}\mu)][1-\phi(R^{8/9}\nu)]$ is present, we again bring the absolute values inside.  We also make use of the fact
that $|\check{\phi}(\zeta)|\lesssim_m |\zeta|^{-m}$ for all $m\in \N$.  Thus, the contribution of this term to either kernel is
\begin{align}\label{B}
&\lesssim \int_{\R^3}\int_{\R^3} \bigl[1-\phi(R^{7/9}\mu)\bigr]\bigl[1-\phi(R^{8/9}\nu)\bigr]
    R^3 |\check{\phi}(2R\nu)| \theta(\mu+\nu)\theta(\mu-\nu)\, d\mu\, d\nu \notag\\
&\lesssim \int_{\R^3} \bigl[1-\phi(R^{8/9}\nu)\bigr] R^3 \bigl|\check{\phi}(2R\nu)\bigr|\, d\nu\notag\\
&\lesssim \int_{|\nu|\gtrsim R^{-8/9}} \frac{R^3} {(R|\nu|)^{24}}\, d\nu\notag\\
&\lesssim R^{-7/3}.
\end{align}

It remains to estimate the contributions associated with the factor $[1-\phi(R^{7/9}\mu)]\phi(R^{8/9}\nu)$.  Here we must treat the two
kernels separately.  We begin by considering $K_1$ which amounts to estimating the integral
$$
I_1:= \int_{\R^3}\int_{\R^3} e^{iR\varphi(\mu, \nu)} \psi(\mu,\nu) \, d\mu\, d\nu,
$$
with
$$
\varphi(\mu, \nu):= \tfrac1R\bigl[t |\mu+\nu|-\tau|\mu-\nu|+(x-y)\cdot \mu+ (x+y)\cdot \nu\bigr]
$$
and
$$
\psi(\mu,\nu):= \bigl[1-\phi(R^{7/9}\mu)\bigr]\phi(R^{8/9}\nu) R^3 \check{\phi}(2R\nu)\theta(\mu+\nu)\theta(\mu-\nu).
$$
To do this, we will employ the technique of non-stationary phase.
For all multi-indices $\alpha\in \Z^3_{\geq 0}$ of length $|\alpha|\leq 4$, we have the following symbol-type estimates:
\begin{gather*}
|\partial_\xi^\alpha \theta(\xi)|\lesssim_\alpha |\xi|^{-|\alpha|},
    \qquad |\partial_\mu^\alpha \psi|\lesssim_\alpha R^3|\mu|^{-|\alpha|},
    \qquad |\partial_\mu^\alpha \varphi|\lesssim_\alpha |\mu|^{1-|\alpha|}, \\
\Bigl|\partial_\mu^\alpha \frac{\mu}{|\mu|}\Bigr|\lesssim_\alpha |\mu|^{-|\alpha|},
    \qtq{and}
    \Bigl| \frac{\mu}{|\mu|}\cdot \nabla_{\!\mu} \varphi\Bigr|\gtrsim 1,
\end{gather*}
uniformly for $(\mu,\nu)\in\supp(\psi)$, which implies $|\mu|\gtrsim R^{-7/9}$ and $|\nu|\lesssim R^{-8/9}$.
To derive the last estimate, one squares both sides and uses the fact that for these $\mu$, $\nu$,
$$
\sqrt{1-\tfrac{|\nu|^2}{|\mu+\nu|^2}}\leq \tfrac{\mu\cdot (\mu\pm \nu)}{|\mu||\mu\pm \nu|}\leq 1,
$$
to obtain
$$
\tfrac{\mu}{|\mu|}\cdot \nabla_{\!\mu} \varphi=\tfrac{t-\tau}R + \tfrac{\mu}{|\mu|}\cdot\tfrac{x-y}R + O\bigl( \tfrac{R^{-2/9}(|t|+|\tau|)}R\bigr),
$$
The inequality now follows from this and \eqref{hyp}.

Using these estimates and the quotient rule in the symbol calculus, we find that the vector
$a:=\bigl(\tfrac{\mu}{|\mu|}\cdot \nabla_{\!\mu} \varphi\bigr)^{-1}\tfrac{\mu}{|\mu|}$ obeys
$$
|\partial_\mu^\alpha a|\lesssim_\alpha |\mu|^{-|\alpha|} \quad \text{uniformly in} \quad  |\mu|\gtrsim R^{-7/9} \text{ and } |\nu|\lesssim R^{-8/9}
$$
when $|\alpha|\leq 4$.  Moreover,
$$
\sup_\nu\bigl|(i\nabla_{\!\mu} \cdot a)^4 \psi\bigr|
\lesssim \sum_{|\alpha_1+\cdots +\alpha_4+\beta|=4} |\partial_\mu^{\alpha_1} a|\cdots |\partial_\mu^{\alpha_4} a||\partial_\mu^\beta \psi|
\lesssim R^3|\mu|^{-4},
$$

Thus, as $e^{iR\varphi}= R^{-4} ( a\cdot i\nabla_{\!\mu})^4 e^{iR\varphi}$, we obtain
\begin{align}\label{I1}
|I_1|&\lesssim R^{-\frac83} \sup_{|\nu|\lesssim R^{-8/9}} \Bigl|\int_{\R^3} e^{iR\varphi(\mu, \nu)} \psi(\mu,\nu) \, d\mu \Bigr|\notag\\
&\lesssim R^{-4-\frac83} \sup_{|\nu|\lesssim R^{-8/9}} \Bigl|\int_{\R^3} e^{iR\varphi(\mu, \nu)} (i\nabla_{\!\mu} \cdot a )^{4}
    \psi(\mu,\nu) \, d\mu \Bigr| \notag\\
&\lesssim R^{-4-\frac83} \int_{|\mu|\gtrsim R^{-7/9}} R^3 |\mu|^{-4}\, d\mu\notag\\
&\lesssim R^{-7/3}.
\end{align}

Collecting \eqref{A}, \eqref{B}, and \eqref{I1}, we obtain
$$
|K_1(t,x;\tau,y)| \lesssim R^{-7/3}.
$$
By virtue of \eqref{hyp}, this settles the $K_1$ portion of the proposition.

We turn now to estimating the remaining portion of the integral defining $K_2$, namely,
\begin{align}
I_2:= \int_{\R^3}\int_{\R^3} \sin(t|\mu+\nu|)&\sin(\tau|\mu-\nu|) R^3 \check{\phi}\bigl( 2R\nu\bigr) e^{i\mu(x-y)+i\nu(x+y)}
    \theta(\mu+\nu)\theta(\mu-\nu)  \notag \\
    & \times \bigl[ 1 - \tfrac{(\mu+\nu)\cdot(\mu-\nu)}{|\mu+\nu| |\mu-\nu|}\bigr] [1-\phi(R^{7/9}\mu)]\phi(R^{8/9}\nu) \,d\mu\,d\nu.
\end{align}
To continue, we use the simple identity
$$
\sin(t|\mu+\nu|)\sin(\tau|\mu-\nu|) = \tfrac12 \Re e^{it|\mu+\nu|-i\tau|\mu-\nu|}-\tfrac12 \Re e^{it|\mu+\nu|+i\tau|\mu-\nu|},
$$
which naturally breaks $I_2$ into the sum of two pieces.  The first summand can be estimated in a manner similar to that used to treat $I_1$,
or by a simplified version of the technique we will use to estimate the second summand,
\begin{align*}
I_2' := \int_{\R^3}\int_{\R^3} e^{i\varphi} R^3 \check\phi(2R\nu)\theta(\mu+\nu)\theta(\mu-\nu)
    \bigl[ 1 - \tfrac{(\mu+\nu)\cdot(\mu-\nu)}{|\mu+\nu| |\mu-\nu|}\bigr] [1-\phi(R^{7/9}\mu)]\phi(R^{8/9}\nu) \,d\mu\,d\nu,
\end{align*}
where
\begin{align*}
\varphi := t|\mu+\nu|+\tau|\mu-\nu| + \mu(x-y) + \nu(x+y).
\end{align*}

As a first estimate on $I_2'$, we note that when $|\nu|\ll |\mu|$, which is where the integrand is supported,
\begin{equation}
1 - \tfrac{(\mu+\nu)\cdot(\mu-\nu)}{|\mu+\nu| |\mu-\nu|} = O\bigl( \tfrac{|\nu|^2}{|\mu|^2}\bigr)
\end{equation}
and hence
\begin{align}
|I_2'| & \lesssim \int_{\R^3} \int_{\R^3} R^3 |\check\phi(2R\nu)| \theta(\mu+\nu)\theta(\mu-\nu) \tfrac{|\nu|^2}{|\mu|^2} \,d\mu\,d\nu
    \lesssim \int_{\R^3} R^3 |\check\phi(2R\nu)| |\nu|^2 \,d\nu  \notag \\
&\lesssim R^{-2}. \label{I2' 1st est}
\end{align}
Note that this is not quite good enough: when integrated over $|t|\lesssim R$ and $|\tau|\lesssim R$, there are no powers of $R$
left over to provide the required decay.  Nevertheless, it does provide the requisite $R^{-1/10}$ bound on the $L^1_{t,\tau}L^\infty_{x,y}$
norm of $K_2$ in the restricted region where $|t+\tau|\leq R^{9/10}$.

It remains only to estimate $I_2'$ in the region where $|t+\tau|\geq R^{9/10}$.  To do this, we will break the $\mu$ integral into six pieces, one near each
coordinate semi-axis.  To this end, let us take a smooth partition of unity of the unit sphere adapted to the open cover
\begin{gather*}
\{\max(|\mu_1|,|\mu_2|) < \tfrac98 \mu_3\},\ \{\max(|\mu_1|,|\mu_2|) < -\tfrac98\mu_3\},\ \{\max(|\mu_3|,|\mu_1|) < \tfrac98\mu_2\},\\
\{\max(|\mu_3|,|\mu_1|) < -\tfrac98\mu_2\}, \ \{\max(|\mu_2|,|\mu_3|) < \tfrac98\mu_1\},\ \{\max(|\mu_2|,|\mu_3|) < -\tfrac98\mu_1\}.
\end{gather*}
We break the $\mu$ integral into pieces by introducing cutoffs $\chi(\mu/|\mu|)$ where $\chi$ denotes one of the elements of this partition of unity.
By symmetry, it suffices to treat the piece associated to the first region listed above.

Recalling that we are considering only the case where $|t+\tau|\geq R^{9/10}$, $|\mu|\gtrsim R^{-7/9}$ and $|\nu|\lesssim R^{-8/9}$, we have
$$
\partial_{\mu_1}^2 \phi = (t+\tau) \tfrac{|\mu|^2-\mu_1^2}{|\mu|^{3}} + O\Bigl(|\mu|^{-1} R^{1-\tfrac19}\Bigr).
$$
Noting that $|\mu|-|\mu_1|\sim |\mu|\sim\mu_3$ on the support of $\chi(\mu/|\mu|)$, we deduce that on this set,
\begin{equation}
\bigl|\partial_{\mu_1}^2 \phi\bigr| \gtrsim \frac{|t+\tau|}{\mu_3}.
\end{equation}
Thus, by writing
$$
\psi(\mu,\nu):= \theta(\mu+\nu)\theta(\mu-\nu)\bigl[ 1 - \tfrac{(\mu+\nu)\cdot(\mu-\nu)}{|\mu+\nu| |\mu-\nu|}\bigr] [1-\phi(R^{7/9}\mu)]
\chi\bigl(\tfrac{\mu}{|\mu|}\bigr)
$$
and noting that
\begin{align*}
\|\psi\|_{L^\infty(d\mu_1)} \lesssim \langle \mu_3 \rangle^{-8} \tfrac{|\nu|^2}{\mu_3^2}
    \qtq{and}
\bigl\|\partial_{\mu_1} \psi\bigr\|_{L^1(d\mu_1)} \lesssim \langle \mu_3 \rangle^{-8} \bigl( \tfrac{|\nu|^2}{\mu_3} + \tfrac{|\nu|^2}{\mu_3^2}  \bigr),
\end{align*}
the Van der Corput Lemma (cf. \cite[p. 334]{stein:large}) yields
\begin{align*}
 \iiint e^{i\varphi} \psi \,d\mu_1\,d\mu_2\,d\mu_3
&\lesssim \iint_{|\mu_2|\lesssim \mu_3} \bigl( \tfrac{\mu_3}{|t+\tau|} \bigr)^{1/2}
    \Bigl\{ \|\psi\|_{L^\infty(d\mu_1)} + \bigl\|\partial_{\mu_1} \psi\bigr\|_{L^1(d\mu_1)} \Bigr\} d\mu_2\,d\mu_3 \\
&\lesssim \tfrac{|\nu|^2}{|t+\tau|^{1/2}},
\end{align*}
uniformly for $|\nu|\lesssim R^{-8/9}$.  Thus when $|t+\tau|\geq R^{9/10}$, we may bound the portion of $I_2'$ partitioned off by $\chi(\mu/|\mu|)$
as follows:
\begin{align*}
& \biggl| \int_{\R^3}\int_{\R^3} e^{i\varphi(\mu,\nu)} \psi(\mu,\nu) R^3 \check\phi(2R\nu) \phi(R^{8/9}\nu) \,d\mu\,d\nu \biggr|
    \lesssim R^{-2} |t+\tau|^{-1/2}.
\end{align*}
As a consequence, we can bound the $L^1_{t,\tau}L^\infty_{x,y}$ norm of $K_2$ on this set of times by $R^{-9/20}$.

This completes the proof of the proposition.
\end{proof}

%
%
%
%

\section{NLW background}\label{S:background}

We start by recording the standard local well-posedness theory for \eqref{nlw}.  All results follow from the Strichartz inequalities discussed in Lemma~\ref{L:Strichartz}
and the usual contraction mapping arguments.

\begin{theorem}[Local well-posedness]\label{T:local}
Given $(u_0, u_1)\in\dot H_x^{s_c}\times \dot H_x^{s_c-1}$ and $t_0\in \R$, there is a unique maximal-lifespan solution $u:I\times\R^3 \rightarrow \R$ to
\eqref{nlw} with initial data $(u(t_0), u_t(t_0))=(u_0, u_1)$.  This solution also has the following properties:
\begin{CI}
\item (Local existence) $I$ is an open neighbourhood of $t_0$.
\item (Blowup criterion) If $\sup I$ is finite, then $u$ blows up forward in time (in the sense of Definition~\ref{D:blowup}); if $\inf I$ is finite,
then $u$ blows up backward in time.
\item (Scattering) If $\sup I=+\infty$ and $u$ does not blow up forward in time, then $u$ scatters forward in time, that is,
there exists a unique $(u_0^+, u_1^+) \in \dot H_x^{s_c}\times \dot H_x^{s_c-1}$ such that
\begin{equation}\label{like u+}
\lim_{t \to +\infty} \Bigl\| u(t) - \cos(t|\nabla|)u_0^+ - \tfrac{\sin(t|\nabla|)}{|\nabla|}u_1^+ \Bigr\|_{\dot H^{s_c}_x} = 0.
\end{equation}
Conversely, given $(u_0^+, u_1^+) \in \dot H_x^{s_c}\times \dot H_x^{s_c-1}$ there is a unique solution to \eqref{nlw} in a neighbourhood of infinity
so that \eqref{like u+} holds.
\item (Small data global existence) If $(u_0, u_1)$ is sufficiently small in $\dot H_x^{s_c}\times \dot H_x^{s_c-1}$, then $u$ is a global solution
which does not blow up either forward or backward in time.  Indeed, in this case
$$
S_\R(u)\lesssim \bigl\|(u_0,u_1)\bigr\|_{\dot H_x^{s_c}\times \dot H_x^{s_c-1}}^{2p}.
$$
\end{CI}
\end{theorem}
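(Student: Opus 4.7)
The proof is a standard Picard/contraction argument built on the Strichartz inequalities of Lemma~\ref{L:Strichartz}, so I would just sketch the Banach fixed point scheme and then read off each of the four bulleted consequences. Since $p$ is an even integer, $F(u)=|u|^p u = u^{p+1}$ is a smooth polynomial, which removes all the usual technical overhead associated with the fractional chain rule: for any $s\in[0,p+1]$, one has the pointwise (or paraproduct) bound $||\nabla|^{s_c}F(u)|\lesssim |u|^p\,||\nabla|^{s_c}u|$ and the analogous difference bound $|F(u)-F(v)|\lesssim (|u|^p+|v|^p)|u-v|$, and these are what drive the contraction.

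The plan is to work in the complete metric space
\[
X_I(\rho):=\Bigl\{v:\ \|(v,v_t)\|_{L^\infty_t(I;\dot H^{s_c}_x\times \dot H^{s_c-1}_x)}+\|v\|_{L^{2p}_{t,x}(I\times\R^3)}\le\rho\Bigr\},
\]
equipped with the distance induced by the $L^{2p}_{t,x}$ norm, and to run the contraction for the Duhamel map
\[
\Phi(v)(t):=\cos((t-t_0)\sqrtDelta)u_0+\sqrtDelta^{-1}\sin((t-t_0)\sqrtDelta)u_1-\int_{t_0}^{t}\sqrtDelta^{-1}\sin((t-s)\sqrtDelta)F(v(s))\,ds.
\]
Applying Lemma~\ref{L:Strichartz} to $\Phi(v)$ on $I$, and choosing $q$ large enough (the scaling works for any fixed $q$ with $q>6$), one obtains
\[
\|\Phi(v)\|_{X_I}\lesssim \|(u_0,u_1)\|_{\dot H^{s_c}_x\times\dot H^{s_c-1}_x}+\bigl\||\nabla|^{s_c}F(v)\bigr\|_{L^{2q/(q+6)}_t L^{q/(q-1)}_x}.
\]
Hölder in space-time distributes the $p+1$ factors of $v$ so that $p$ of them sit in $L^{2p}_{t,x}$ and the remaining $|\nabla|^{s_c}v$ is placed in a mixed norm controlled by interpolation between $\|v\|_{L^\infty_t\dot H^{s_c}_x}$ and $\|v\|_{L^{2p}_{t,x}}$ (via the Sobolev embedding implicit in Lemma~\ref{Bernstein}); this gives
\[
\bigl\||\nabla|^{s_c}F(v)\bigr\|_{L^{2q/(q+6)}_t L^{q/(q-1)}_x}\lesssim \|v\|_{L^{2p}_{t,x}(I\times\R^3)}^p\,\|v\|_{X_I}.
\]
A parallel estimate for $\Phi(v)-\Phi(w)$ uses $|F(v)-F(w)|\lesssim(|v|^p+|w|^p)|v-w|$. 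The contraction closes once $\|v\|_{L^{2p}_{t,x}(I\times\R^3)}$ is small, which by Lemma~\ref{L:Strichartz} applied to the linear evolution we can arrange by taking $|I|$ small (monotone convergence in the Strichartz norm of the linear flow), or, for small initial data, by taking the data small on all of $\R$.

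With this machinery, each bullet is immediate. Local existence follows by running the contraction on a small interval containing $t_0$ and uniqueness propagates in the standard way. The blowup criterion is the contrapositive: if $\sup I<\infty$ and $S_{[t_1,\sup I)}(u)<\infty$, then splitting $[t_1,\sup I)$ into finitely many subintervals on which $\|u\|_{L^{2p}_{t,x}}$ is small lets one iterate the contraction past $\sup I$, contradicting maximality. Scattering is a Cauchy-in-$\dot H^{s_c}_x$ argument: the nonlinear estimate together with $S_\R(u)<\infty$ shows that $\cos(t\sqrtDelta)u(t)\mp \sqrtDelta^{-1}\sin(t\sqrtDelta)u_t(t)$ (suitably written in terms of the linear propagator group) converges in $\dot H^{s_c}_x\times\dot H^{s_c-1}_x$ as $t\to\pm\infty$, yielding $(u_0^\pm,u_1^\pm)$; the converse (constructing a solution near infinity matching prescribed scattering data) is the same contraction run on $[T,\infty)$ with $T$ large. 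Small data global existence is just the observation that for small $\|(u_0,u_1)\|_{\dot H^{s_c}_x\times\dot H^{s_c-1}_x}$ the linear $L^{2p}_{t,x}$ norm is small globally, so the contraction closes on $I=\R$, and the resulting bound $S_\R(u)\lesssim \|(u_0,u_1)\|^{2p}$ drops out of the fixed-point estimate.

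The only genuine obstacle is the nonlinearity estimate: one must verify that, for $p>4$, there exists an admissible exponent $q$ (equivalently, a Strichartz pair) such that the Hölder bookkeeping above actually balances in scaling and places $|\nabla|^{s_c}v$ in a norm that is controlled by $X_I$. This is a scaling check — both sides have the critical scaling by construction — combined with verifying the mixed-norm interpolation inequality
\[
\bigl\||\nabla|^{s_c}v\bigr\|_{L^{q/3}_t L^{2q/(q-2)}_x}\lesssim \|v\|_{L^\infty_t\dot H^{s_c}_x}^{1-\theta}\,\|v\|_{L^{2p}_{t,x}}^{\theta}
\]
for the appropriate $\theta\in(0,1)$ (obtained by Sobolev embedding in space and interpolation in time). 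For $p>4$ this is available, so the whole scheme closes.
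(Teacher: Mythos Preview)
Your approach is exactly what the paper intends: it states only that ``all results follow from the Strichartz inequalities discussed in Lemma~\ref{L:Strichartz} and the usual contraction mapping arguments,'' and your sketch is a faithful expansion of that sentence.

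One technical correction to your closing paragraph: the interpolation inequality
\[
\bigl\||\nabla|^{s_c}v\bigr\|_{L^{q/3}_t L^{2q/(q-2)}_x}\lesssim \|v\|_{L^\infty_t\dot H^{s_c}_x}^{1-\theta}\,\|v\|_{L^{2p}_{t,x}}^{\theta}
\]
is false as written, since the left side asks for $|\nabla|^{s_c}v$ in $L^r_x$ with $r=2q/(q-2)>2$, while the right side only controls $|\nabla|^{s_c}v$ in $L^2_x$ (the $L^{2p}_{t,x}$ factor carries no derivatives and cannot supply the extra integrability). The fix is simply to distribute the H\"older exponents differently: place $|\nabla|^{s_c}v$ in $L^\infty_t L^2_x$ and put all the time integrability on the $p$ undifferentiated factors, which then need to lie in $L^{2pq/(q+6)}_t L^{2pq/(q-2)}_x$. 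This norm is obtained by interpolating the two Strichartz norms $\|v\|_{L^{2p}_{t,x}}$ and $\|v\|_{L^{p/2}_t L^\infty_x}$ (both supplied by Lemma~\ref{L:Strichartz}), so one should enlarge $X_I$ to include $\|v\|_{L^{p/2}_t L^\infty_x}$. With that adjustment the contraction closes exactly as you describe.
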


Our next topic is the energy flux identity/inequality, which is a variant of the Morawetz identity/inequality discussed in
the introduction and is proved in much the same way.  We will use it in connection with the Duhamel formulae \eqref{Duhamel}, in order
to show that the time integrals converge.

\begin{lemma}[Energy flux inequality]\label{L:energy flux}
If $u$ is a solution to \eqref{nlw} with $(u,u_t)\in L_t^\infty (I; \dot H^{s_c}_x\times\dot H^{s_c-1}_x)$, then
\begin{align*}
\int_I \int_{|x-y|=|t|} &|u(t,y)|^{p+2} \, dS(y)\, dt \lesssim_u \sup_{t\in I} |t|^{1-\frac4p}
\end{align*}
uniformly for $x\in\R^3$.
\end{lemma}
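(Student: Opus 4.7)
I would prove Lemma~\ref{L:energy flux} by applying the divergence theorem to the classical stress-energy current on a bounded bi-cone region and estimating the resulting local energy by Sobolev embedding. The key observation is that although the energy $E(u) = \int e(0,y)\,dy$ need not be finite at the critical regularity, its restriction to a ball of radius $R$ is controlled by $R^{1-4/p}$.

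\textbf{Setup and flux identity.} Write
$$e(t,y) := \tfrac12|u_t|^2 + \tfrac12|\nabla u|^2 + \tfrac{1}{p+2}|u|^{p+2}, \qquad \mathbf{p}(t,y) := -u_t\nabla u,$$
so that multiplying \eqref{nlw} by $u_t$ gives the conservation law $\partial_t e + \nabla_y\cdot \mathbf{p} = 0$ (to be justified at our regularity by smooth approximation using the Strichartz theory of Lemma~\ref{L:Strichartz}). Fix $x\in\R^3$ and $T>0$ with $[0,T]\subset I$, and apply the divergence theorem to the field $(e,\mathbf{p})$ on the truncated bi-cone
$$\Omega_T := \{(s,y):\; 0\leq s\leq T,\; s\leq |y-x|\leq 2T-s\},$$
whose boundary consists of the bottom ball $B(x,2T)\times\{s=0\}$, the inner cone $M_{\text{in}} := \{|y-x|=s\}$, and the outer cone $M_{\text{out}} := \{|y-x| = 2T - s\}$, the latter two meeting at the sphere $|y-x|=T$ at time $s=T$. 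With $\omega := (y-x)/|y-x|$, a direct computation shows
$$\tfrac{1}{\sqrt2}\bigl(e \pm \omega\cdot\mathbf{p}\bigr) = \tfrac{1}{\sqrt2}\Bigl(\tfrac12|u_t \pm \omega\cdot\nabla u|^2 + \tfrac12|\nabla_{\!\tan}u|^2 + \tfrac{|u|^{p+2}}{p+2}\Bigr) \geq 0,$$
and these are precisely the outward-flux integrands on $M_{\text{in}}$ and $M_{\text{out}}$. Discarding the non-negative outer-cone flux and the kinetic contributions on the inner cone, the divergence theorem reduces to
$$\frac{1}{\sqrt2(p+2)} \int_{M_{\text{in}}} |u|^{p+2}\,dS \leq \int_{B(x,2T)} e(0,y)\,dy.$$

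\textbf{Local energy bound and conclusion.} Sobolev embedding in three dimensions gives $\dot H_x^{s_c}\hookrightarrow L_x^{3p/2}$ and $\dot H_x^{s_c-1}\hookrightarrow L_x^{3p/(p+2)}$. Applying Hölder on $B(x,2T)$ with complementary exponent $3p/(p-4)$, each of the three terms in $e(0,\cdot)$ picks up the factor $|B(x,2T)|^{(p-4)/(3p)} \sim T^{1-4/p}$, so that
$$\int_{B(x,2T)} e(0,y)\,dy \lesssim T^{1-4/p}\Bigl(\|u(0)\|_{\dot H_x^{s_c}}^{p+2} + \|u(0)\|_{\dot H_x^{s_c}}^{2} + \|u_t(0)\|_{\dot H_x^{s_c-1}}^{2}\Bigr) \lesssim_u T^{1-4/p},$$
uniformly in $x\in\R^3$. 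Parametrizing $M_{\text{in}}$ as $(s,\omega)\mapsto (s,x+s\omega)$ produces surface element $\sqrt2\,s^2\,ds\,d\omega$, so
$$\int_{M_{\text{in}}}|u|^{p+2}\,dS = \sqrt2\int_0^T\int_{|y-x|=s}|u(s,y)|^{p+2}\,dS(y)\,ds,$$
which yields the forward-in-time half of the lemma. The backward case follows identically, using the mirror region $\{-T\leq s\leq 0,\; -s\leq |y-x|\leq 2T+s\}$. I expect the only real technicality to be justifying the pointwise conservation law $\partial_t e + \nabla\cdot\mathbf{p}=0$ at the given low regularity, which is routine via mollification combined with the local theory recorded in Lemma~\ref{L:Strichartz}.
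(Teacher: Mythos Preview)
Your proof is correct and follows essentially the same approach as the paper: both apply the energy flux identity on the light cone emanating from $(0,x)$ and bound the resulting local energy via H\"older's inequality and Sobolev embedding. The paper's version is marginally more streamlined---it differentiates the energy $\mathcal{E}(t)=\int_{|y-x|\leq|t|} e(t,y)\,dy$ in the expanding ball directly (equivalently, applies the divergence theorem on the solid cone rather than your bi-cone), thereby dispensing with the superfluous outer cone---but the substance is identical.
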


\begin{proof}
The result follows by applying the Fundamental Theorem of Calculus to
$$
\mathcal{E}(t) := \int_{|x-y|\leq |t|} \tfrac12|\ntx u(t,y)|^2 + \tfrac1{p+2} |u(t,y)|^{p+2}\, dy
$$
and noting that
$$
\mathcal{E}(t) \lesssim \Bigl(\|\ntx u(t)\|_{L_x^{\frac{3p}{p+2}}}^2 +\|u(t)\|_{ L_x^{\frac{3p}2}}^{p+2}\Bigr) |t|^{1-\frac4p} \lesssim_u |t|^{1-\frac4p},
$$
by H\"older's inequality and Sobolev embedding.
\end{proof}

The small data theory shows that the $\dot H^{s_c}_x\times\dot H^{s_c-1}_x$ norm of a blowup solution must remain bounded from below.  The fact that this
norm is non-local in space reduces the efficacy of this statement.  Our next lemma gives a lower bound in a more suitable norm:

\begin{lemma}[$\ntx u$ nontriviality]\label{L:u' nontriviality} Let $u$ be a global solution that is almost periodic modulo symmetries.  Then,
\begin{equation}\label{E:u' nontrivial}
    \inf_{t\in\R} \ \int_{\R^3} |\ntx u(t,x)|^\frac{3p}{p+2} \,dx \gtrsim_u 1.
\end{equation}
\end{lemma}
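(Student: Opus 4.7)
The plan is to argue by contradiction. Suppose there were a sequence of times $t_n \in \R$ with $\|\ntx u(t_n)\|_{L_x^{3p/(p+2)}} \to 0$. I introduce the normalizations $v_n := u^{[t_n]}$ from \eqref{E:Normalization}. A direct scaling computation shows that the exponent $q = 3p/(p+2)$ is precisely the one for which the $L_x^q$ norm of $\ntx u$ is invariant under \eqref{scaling}; hence $\|\ntx v_n(0)\|_{L_x^q} = \|\ntx u(t_n)\|_{L_x^q} \to 0$. The same exponent arises through the Sobolev embeddings $\|\nabla f\|_{L^{3p/(p+2)}_x} \lesssim \|f\|_{\dot H_x^{s_c}}$ and $\|g\|_{L^{3p/(p+2)}_x} \lesssim \|g\|_{\dot H_x^{s_c-1}}$, which hold since $s_c>1$.

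By almost periodicity modulo symmetries, together with the Arzel\`a--Ascoli characterization in Remark~2 following Definition~\ref{D:ap}, the orbit $\{(v_n(0), \partial_t v_n(0)) : n\geq 1\}$ is precompact in $\dot H_x^{s_c} \times \dot H_x^{s_c-1}$. Passing to a subsequence, $(v_n(0), \partial_t v_n(0))$ converges strongly to some $(v_0,v_1)$. By continuity of the Sobolev embeddings just recalled, $\ntx v_n(0) \to (v_1, \nabla v_0)$ in $L_x^{3p/(p+2)}$; combined with the vanishing from the first paragraph this forces $v_1=0$ and $\nabla v_0=0$. As a distribution in $\dot H_x^{s_c}$ with vanishing gradient, $v_0$ must be constant, and the only constant lying in $\dot H_x^{s_c}$ is zero. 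Thus $(v_n(0),\partial_t v_n(0))\to (0,0)$ in $\dot H_x^{s_c}\times \dot H_x^{s_c-1}$.

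I then invoke the small-data part of Theorem~\ref{T:local}. For $n$ large enough, the norm $\|(v_n(0),\partial_t v_n(0))\|_{\dot H_x^{s_c}\times \dot H_x^{s_c-1}}$ lies below the small-data threshold, and so $S_\R(v_n) \lesssim \|(v_n(0),\partial_t v_n(0))\|_{\dot H_x^{s_c}\times \dot H_x^{s_c-1}}^{2p} \to 0$. On the other hand, the quantity $S_\R$ defined in \eqref{E:S defn} is itself invariant under both spacetime translation and the scaling \eqref{scaling}, whence $S_\R(v_n) = S_\R(u)$ for every $n$. Hence $S_\R(u)=0$, which forces $u\equiv 0$ and contradicts the (implicit) nontriviality of $u$.

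The only genuinely subtle point is the gap between control of the scale-invariant $L_x^{3p/(p+2)}$ norm of $\ntx u$ at a single time and control of the full critical Sobolev norm at that time; a priori the former is strictly weaker. The bridge is the compactness supplied by almost periodicity, which lets one pass to a strong limit in $\dot H_x^{s_c} \times \dot H_x^{s_c-1}$, and the elementary observation that $\dot H_x^{s_c}$ contains no nonzero constants, which then upgrades the vanishing of $\ntx$ of the limit to vanishing of the limit itself.
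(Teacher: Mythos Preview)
Your proof is correct and follows essentially the same approach as the paper's: both exploit the precompactness of the rescaled orbit in $\dot H^{s_c}_x\times\dot H^{s_c-1}_x$, the Sobolev embedding into $L^{3p/(p+2)}_x$, and the small-data theory to rule out the critical norm becoming small. The paper phrases the compactness step as a lower bound on the scale-invariant ratio $\|f\|_{L^{3p/(p+2)}_x}/\|f\|_{\dot H^{s_c-1}_x}$ over the precompact orbit (which stays away from zero by the small-data theory), whereas you unpack this into an explicit contradiction argument by extracting a limit and showing it must vanish; these are the same argument in slightly different dress.
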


\begin{proof}
First we note that by the small data theory,
\begin{equation}\label{E:u nontrivial in Hsc}
    \inf_{t\in\R} \ \bigl\|\ntx u(t,x) \bigr\|_{\dot H^{s_c-1}_x} \gtrsim_p 1,
\end{equation}
for otherwise $u$ would have finite spacetime norm in contravention of the hypotheses of this lemma.
Indeed, a solution that scatters cannot be almost periodic modulo symmetries.

Next, we note that
$$
\| f \|_{L_x^\frac{3p}{p+2}} \div \| f \|_{\dot H^{s_c-1}_x} > 0
$$
for any non-zero $\R^4$-valued $f\in\dot H^{s_c-1}_x$. Hence this ratio achieves a non-zero minimum on any compact set
that does not contain the zero function. Indeed, since this ratio is invariant under scaling and translation, it
suffices for the set to be compact modulo these symmetries.  Therefore, the ratio is bounded from below on the (precompact) orbit
$\ntx u(t)$, and so, in view of \eqref{E:u nontrivial in Hsc}, the lemma follows.  Note also that \eqref{E:u nontrivial in Hsc}
guarantees that the orbit $\ntx u(t)$ does not approach the zero function.
\end{proof}

It is not possible to obtain lower bounds on the norm of $u(t)$ for a single time $t$, as it is quite conceivable that $u(t)=0$,
with all the $\dot H^{s_c}_x \times \dot H^{s_c-1}_x$ norm concentrating in $u_t(t)$.  Nevertheless, this phenomenon must be rather
rare as our next lemma demonstrates.

\begin{lemma}[$L^{3p/2}_x$-norm nontriviality]\label{L:3p/2 nontriviality}
Let $u$ be a global solution that is almost periodic modulo symmetries.  Then, for any $A>0$, there exists $\eta=\eta(u,A)>0$ so that
\begin{equation}\label{E:positive density}
\bigl| \bigl\{ t \in [t_0, t_0 + A N(t_0)^{-1}] : \|u(t)\|_{L^{3p/2}_x(\R^3)} \geq \eta \bigr\} \bigr| \geq \eta N(t_0)^{-1}
\end{equation}
for all $t_0\in\R$.
\end{lemma}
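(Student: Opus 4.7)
My plan is to use the scaling invariance of \eqref{nlw} to reduce to a normalized claim, then argue by contradiction using the compactness provided by almost periodicity modulo symmetries. A direct calculation shows that the norm $\|\cdot\|_{L^{3p/2}_x}$ is scaling-critical (indeed, $\dot H^{s_c}_x\hookrightarrow L^{3p/2}_x$) and $dt$ scales like $N^{-1}$, so both sides of \eqref{E:positive density} are invariant when one replaces $u$ by a normalization $u^{[t_0]}$ as in \eqref{E:Normalization}. Hence it suffices to prove the following reformulation: there exists $\eta=\eta(u,A)>0$ so that every normalization $v=u^{[t_0]}$ (as $t_0$ ranges over $\R$) satisfies
$$
\bigl|\bigl\{ t \in [0,A] : \|v(t)\|_{L^{3p/2}_x} \geq \eta \bigr\}\bigr| \geq \eta.
$$

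Suppose the reformulated claim failed. Then there is a sequence $\{t_n\}\subset\R$ so that $v_n := u^{[t_n]}$ satisfies $|\{t\in[0,A]:\|v_n(t)\|_{L^{3p/2}_x}\geq 1/n\}| < 1/n$. Each $v_n$ is itself a solution to \eqref{nlw}, almost periodic modulo symmetries with the same compactness modulus function as $u$ (Remark 1 following Definition~\ref{D:ap}). By the characterization \eqref{E:Standard orbit}, the family $\{(v_n(0),\partial_t v_n(0))\}_n$ is precompact in $\dot H^{s_c}_x\times\dot H^{s_c-1}_x$; extracting a subsequence, it converges to some $(v_0^\infty, v_1^\infty)$. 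Let $v^\infty$ denote the associated maximal solution to \eqref{nlw}.

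The pivotal technical step is a stability argument showing that the lifespan of $v^\infty$ contains $[0,A]$ and that $v_n\to v^\infty$ in $C^0_t(\dot H^{s_c}_x\times \dot H^{s_c-1}_x)$ on this interval. The shared compactness modulus gives uniform frequency-localization of the data, which together with Theorem~\ref{T:local} and the Strichartz estimates of Lemma~\ref{L:Strichartz} yields a uniform local existence time; a standard iteration covers all of $[0,A]$. Sobolev embedding $\dot H^{s_c}_x\hookrightarrow L^{3p/2}_x$ then upgrades this to $v_n\to v^\infty$ in $C^0_t L^{3p/2}_x([0,A]\times\R^3)$. Since $\|v_n(t)\|_{L^{3p/2}_x}<1/n$ outside a set of measure $<1/n$, the limit $\|v^\infty(t)\|_{L^{3p/2}_x}$ vanishes on a dense subset of $[0,A]$ and, by continuity, on all of $[0,A]$. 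Consequently $\partial_t v^\infty\equiv 0$ on $[0,A]$ as well, so $(v_0^\infty, v_1^\infty) = (0,0)$.

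This is incompatible with Lemma~\ref{L:u' nontriviality} (more precisely, with the small-data bound \eqref{E:u nontrivial in Hsc} appearing in its proof): by the small-data global existence part of Theorem~\ref{T:local}, a sufficiently small datum produces a scattering solution, which contradicts almost periodicity modulo symmetries. Hence $\|v_n(0)\|_{\dot H^{s_c}_x}+\|\partial_t v_n(0)\|_{\dot H^{s_c-1}_x}\gtrsim 1$ uniformly in $n$, contradicting convergence to $(0,0)$. The main obstacle in the proof is precisely the stability step: turning the precompactness of $\{(v_n(0),\partial_t v_n(0))\}$ into uniform-in-$n$ Strichartz control on the fixed interval $[0,A]$. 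This is standard for $\dot H^{s_c}$-critical equations once a uniform compactness modulus is available, but it is the one place where the precompactness built into Definition~\ref{D:ap} is used in an essential, quantitative way.
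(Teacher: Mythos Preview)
Your argument is correct and follows the same overall strategy as the paper: pass to normalizations, argue by contradiction using the precompactness furnished by almost periodicity, and conclude via the small-data theory. The paper's execution differs in one instructive way. Rather than passing to a limiting nonlinear solution $v^\infty$ and then having to verify that it persists on all of $[0,A]$, the paper first uses precompactness of the full family $\{u^{[t_0]}(s):t_0\in\R,\,s\in[0,A]\}$ to reduce the positive-measure claim to mere \emph{non-emptiness}, and then compares each $u^{[t_n]}$ to its own \emph{linear} evolution via a Duhamel bootstrap; since the linear flow is global for free, this reduces matters to uniqueness for the linear wave equation with no persistence issue to address. Your ``standard iteration'' to cover $[0,A]$ does work, but note that compactness of the data at time zero alone buys only one step; iterating requires precompactness of $\{(v_n(s),\partial_s v_n(s))\}$ at later $s$, which ultimately rests on the local constancy $N(t)\sim_u N(t_0)$ for $|t-t_0|\lesssim N(t_0)^{-1}$ (cf.\ \eqref{E:N loc constant}).
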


\begin{proof}
Recasting \eqref{E:positive density} in terms of the normalizations of $u$, defined in \eqref{E:Normalization}, yields
\begin{equation}\label{E:PD2}
\bigl| \bigl\{ s \in [0,A] : \|u^{[t_0]}(s)\|_{L^{3p/2}_x(\R^3)} \geq \eta \bigr\} \bigr| \geq \eta.
\end{equation}
As the map from the initial data to the solution is continuous (a consequence of the local theory) and the set
$$
\bigl\{ \bigl( u^{[t_0]}(0),\; u^{[t_0]}_t(0),\; s\bigr) : t_0\in\R \text{ and } s\in[0,A] \bigr\}
$$
is precompact, we deduce that $\{ u^{[t_0]}(s) : t_0\in\R \text{ and } s\in[0,A]\}$ is precompact in $\dot H^{s_c}_x$.
Thus by Sobolev embedding, we see that it suffices to show that for some choice of $\eta$ the set appearing in \eqref{E:PD2} is non-empty for all $t_0\in\R$.
(Of course, the passage from non-emptyness to positive measure requires a reduction in $\eta$.)

To see that the set appearing in \eqref{E:PD2} is non-empty, we argue by contradiction.   To this end, imagine that
there is a sequence of times $t_n$ so that
\begin{equation}\label{E:PDblah}
\|u^{[t_n]}(s)\|_{L^{3p/2}_x(\R^3)} \to 0 \qtq{uniformly for}  s \in [0,A].
\end{equation}
Then, by a simple bootstrap argument using the Duhamel formula, \eqref{old duhamel}, and the Strichartz inequality, we
deduce that
$$
\bigl\| u^{[t_n]}(s) - \cos(s|\nabla|) u^{[t_n]}(0) - |\nabla|^{-1} \sin(s|\nabla|) u_s^{[t_n]}(0)
    \bigr\|_{L^\infty_s L^{3p/2}_x([0,A]\times\R^3)}
     \to 0.
$$
Thus, appealing to \eqref{E:PDblah} once again, we obtain
\begin{equation}\label{E:PDblah2}
\bigl\| \cos(s|\nabla|) u^{[t_n]}(0) + |\nabla|^{-1} \sin(s|\nabla|) u_s^{[t_n]}(0) \bigr\|_{L^\infty_s L^{3p/2}_x([0,A]\times\R^3)}
     \to 0.
\end{equation}
This, we will see, contradicts the uniqueness theorem for the linear wave equation.

As noted previously (cf. the remarks after Definition~\ref{D:ap}), almost-periodicity of $u$ implies that the sequence
of pairs $(u^{[t_n]}(0),u_s^{[t_n]}(0))$ is precompact in $\dot H^{s_c}_x\times \dot H^{s_c-1}_x$. Thus, by passing to a
subsequence, we may assume that it converges and name the limit $(f,g)$.  This limit must be non-zero, for otherwise, we
could apply the small-data theory to the pair $(u(t_n),u_t(t_n))$, for $n$ large enough, and deduce that $u$ is global
and has finite spacetime norm.

On the other hand, from \eqref{E:PDblah2} we see that
\begin{equation*}
 \cos(s|\nabla|) f + |\nabla|^{-1} \sin(s|\nabla|) g = 0 \qtq{for all} s\in[0,A],
\end{equation*}
which implies $f\equiv g\equiv 0$, since they can be reconstructed from the behaviour of this solution of the linear
wave equation as $s\to0$.  This contradicts the results of the previous paragraph and so completes the proof of the lemma.
\end{proof}

\begin{corollary}[Potential energy concentration] \label{C:pot conc}
Let $u$ be a global solution that is almost periodic modulo symmetries.
Then, there exists $C=C(u)$ so that
\begin{equation}\label{E:some pot E}
\int_I \int_{|x-x(t)|\leq C/ N(t)} |u(t,x)|^{p+2} \,dx\,dt  \gtrsim_u \int_I N(t)^{\frac4p-1} \,dt
\end{equation}
uniformly for all intervals $I=[t_1,t_2]\subseteq \R$ with $t_2\geq t_1 + N(t_1)^{-1}$.
\end{corollary}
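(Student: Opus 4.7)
The plan is to combine Lemma~\ref{L:3p/2 nontriviality} (positive-density of times with non-trivial $L^{3p/2}_x$ norm) with a compactness argument that converts $L^{3p/2}_x$ non-triviality into a lower bound on $\int_B |u|^{p+2}\,dx$ with the correct scaling in $N(t)$, and then sum over a covering of $I$ by intervals of length $\sim 1/N(\cdot)$. The crucial observation enabling this is that for $p>4$ we have $p+2<3p/2$, so on any bounded set $B$ the \emph{reverse} inclusion $L^{3p/2}(B)\hookrightarrow L^{p+2}(B)$ holds---exactly the direction needed to pass from $L^{3p/2}_x$ to $L^{p+2}_x$ control.

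The first step is a pointwise claim: there exist $\eta, C, \delta > 0$ depending only on $u$ such that $\|u(t)\|_{L^{3p/2}_x}\geq \eta$ implies $\int_{|x-x(t)|\leq C/N(t)} |u(t,x)|^{p+2}\,dx \geq \delta\, N(t)^{4/p-1}$. I would pass to the normalized solutions $u^{[t]}(0,\cdot)$, which by the remark after Definition~\ref{D:ap} form a precompact set $\mathcal{K}\subset\dot H^{s_c}_x$. Composing the sharp Sobolev embedding $\dot H^{s_c}_x\hookrightarrow L^{3p/2}_x$ with the local inclusion $L^{3p/2}(B(0,R))\hookrightarrow L^{p+2}(B(0,R))$ makes $\Phi(f):=\|f\|_{L^{p+2}(B(0,R))}^{p+2}$ a continuous functional on $\dot H^{s_c}_x$. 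Fixing $\eta=\eta(u,1)$ from Lemma~\ref{L:3p/2 nontriviality} and choosing $R$ by almost periodicity in physical space so that $\|f\|_{L^{3p/2}(|x|>R)}\leq \eta/2$ for all $f\in\overline{\mathcal{K}}$, the closed subset $\mathcal{K}_\eta:=\{f\in\overline{\mathcal{K}}:\|f\|_{L^{3p/2}}\geq \eta\}$ is compact, and every $f\in\mathcal{K}_\eta$ satisfies $\|f\|_{L^{3p/2}(B(0,R))}\geq \eta/2>0$, forcing $\Phi(f)>0$. Compactness then yields a uniform lower bound $\delta_0:=\inf_{\mathcal{K}_\eta}\Phi>0$, and unwinding the rescaling \eqref{E:Normalization} recovers the claim with $C=R$ and the scaling factor $N(t)^{4/p-1}$ produced automatically.

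Having the pointwise claim, I would cover $I=[t_1,t_2]$ greedily by disjoint intervals $J_k=[s_k,s_{k+1}]$ with $s_1=t_1$ and $s_{k+1}=\min(s_k+N(s_k)^{-1},t_2)$. The hypothesis $t_2\geq t_1+N(t_1)^{-1}$ guarantees at least one full $J_k$, and on each full $J_k$ Lemma~\ref{L:3p/2 nontriviality} (with $A=1$) produces $E_k\subset J_k$ with $|E_k|\gtrsim N(s_k)^{-1}$ on which $\|u(t)\|_{L^{3p/2}_x}\geq \eta$. Combined with the pointwise claim,
\begin{equation*}
\int_{J_k}\int_{|x-x(t)|\leq C/N(t)} |u|^{p+2}\,dx\,dt \;\geq\; \delta \int_{E_k} N(t)^{\frac{4}{p}-1}\,dt.
\end{equation*}
By the standard local constancy of $N$ for almost periodic solutions (namely $N(t)\sim N(s_k)$ on $J_k$, a consequence of the local well-posedness theory), both $\int_{E_k}N^{4/p-1}\,dt$ and $\int_{J_k}N^{4/p-1}\,dt$ are comparable to $N(s_k)^{4/p-2}$, so they are mutually comparable up to a constant depending only on $u$. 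Summing over $k$ delivers the corollary.

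The main obstacle is the pointwise claim: this is where the hypothesis $p>4$ enters crucially, via the favourable direction of the local embedding $L^{3p/2}(B)\hookrightarrow L^{p+2}(B)$ that makes the compactness-and-rescaling argument deliver exactly the required scaling $N(t)^{4/p-1}$. The secondary (but standard) technical point is the local constancy of $N$, used to pass from Lebesgue density of $E_k$ inside $J_k$ to density in the weighted measure $N(t)^{4/p-1}\,dt$.
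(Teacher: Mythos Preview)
Your proof is correct and follows essentially the same strategy as the paper: reduce to intervals of length $\sim N(t_0)^{-1}$ via local constancy of $N$, invoke Lemma~\ref{L:3p/2 nontriviality} to find a set of times with $L^{3p/2}_x$ norm bounded below, then use compactness of the normalizations to convert this into a lower bound on $\int_B|u|^{p+2}$ with the right scaling. One minor remark: you present the local embedding $L^{3p/2}(B)\hookrightarrow L^{p+2}(B)$ (and hence the hypothesis $p>4$) as the crucial mechanism in the compactness step, but the paper's version of this step is agnostic to the ordering of the exponents---it simply notes that if $f_n\to f$ in $L^{3p/2}_x$ and $f_n\to 0$ in $L^{p+2}_x$, then $f=0$ (via a.e.\ convergence of subsequences), which yields the same implication by contraposition without needing $p+2<3p/2$.
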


\begin{proof}
We know that there exists $\delta=\delta(u)$ so that
\begin{equation}\label{E:N loc constant}
N(t) \sim_u N(t_0) \qtq{uniformly for} t\in[t_0-\delta N(t_0)^{-1}, t_0+\delta N(t_0)^{-1}]\qtq{and} t_0\in\R.
\end{equation}
Indeed, if it were not possible to choose a $\delta$ with this property, then one could find a convergent sequence of
initial data (taken from normalizations of $u$) whose limit blows up instantaneously, in contradiction to the local
theory.  For further details, see \cite[Corollary~3.6]{KTV} or \cite[Lemma~5.18]{ClayNotes}.  We note that
the argument requires perturbation theory, which is an ingredient in the proof of Theorem~\ref{T:reduct}.

In view of \eqref{E:N loc constant}, it suffices to prove the result for intervals of the form
$[t_0, t_0+\delta N(t_0)^{-1}]$ for some small fixed $\delta>0$.  The simple argument that shows this requires that
$I$ contain at least one interval of this form.   This is the origin of the requirement $t_2\geq t_1 + N(t_1)^{-1}$ in the statement of the corollary;
correspondingly, we require $\delta\leq1$.

As noted in \eqref{E:u' compact}, the almost-periodicity of $u$ and Sobolev embedding imply that for
any $\eta>0$ there exists $C(\eta)>0$ so that
$$
\int_{|x-x(t)|\geq C(\eta)/N(t)} |u(t,x)|^{\frac{3p}2} \,dx \leq \eta.
$$
Combining this with Lemma~\ref{L:3p/2 nontriviality} yields the following:  There exists $C=C(u)$ so that the set~of
$$
t\in[t_0,t_0+\delta N(t_0)^{-1}] \qtq{such that}
\int_{|x-x(t)|\leq C/ N(t)} |u(t,x)|^{3p/2} \,dx \gtrsim_u 1
$$
has measure $\gtrsim_u N(t_0)^{-1}$.  In view of this, it suffices to show that given $\eta_0>0$ there exists
$\eta_1=\eta_1(u,\eta_0)>0$ so that
\begin{equation*}
\int_{|x-x(t)|\leq C/N(t)} |u(t,x)|^{3p/2} \,dx \geq\eta_0
\implies N(t)^{1-\frac4p} \int_{|x-x(t)|\leq C/N(t)} |u(t,x)|^{p+2} \,dx \geq\eta_1.
\end{equation*}
The truth of this statement follows from the almost-periodicity of $u$.  Indeed, passing to the normalizations of
$u(t)$ and recalling that these form a precompact set in $L^{3p/2}_x$, the statement reduces to the fact that if a
sequence $\{f_n\}$ converges in $L^{3p/2}_x$ and converges to zero in $L^{p+2}_x$ then it converges to zero in
$L^{3p/2}_x$.
\end{proof}

%
%
%
%

\section{Global enemies are sub-luminal}\label{S:subluminal}

The principal goal of this section is to show that for the global enemies (the soliton-like and frequency-cascade solutions) of
Theorem~\ref{T:enemies}, the center $x(t)$ of the wave packet travels strictly slower than the speed of light, at least on
average, over reasonably long time intervals.  Note that Definition~\ref{D:ap} does not define $x(t)$ uniquely, but only
up to a radius of about $N(t)^{-1}$.  While this does not render the goal of this section ambiguous, it is something of
a nuisance in the proof.  For that reason, we first standardize $x(t)$ in some mild fashion.  This is our first
proposition.  The main result of the section, the sub-luminality of global enemies, is Proposition~\ref{P:subluminal}.

\begin{proposition}[Centering $x(t)$]\label{P:Standard}
Let $u$ be a global almost periodic solution to \eqref{nlw}.  The function $x(t)$ can be modified so that it retains all properties
stated in Definition~\ref{D:ap} (though $C(\eta)$ may need to be made larger) and in addition satisfies the following:
For some large constant $C_u$ and all $\omega\in S^2$,
\begin{equation}\label{x splits u}
\int_{\omega\cdot(x-x(t))>0} |\ntx u(t,x)|^{\frac{3p}{p+2}}\,dx \geq \tfrac1{C_u}
\end{equation}
that is, each plane through $x(t)$ partitions $u$ into two \emph{non-trivial} pieces.  Moreover,
\begin{equation}\label{x is Lip}
|x(t_1) - x(t_2)| \leq |t_1-t_2| + C_u N(t_1)^{-1} + C_u N(t_2)^{-1} \qtq{for any} t_1,t_2\in \R;
\end{equation}
indeed, this was also true for the original $x(t)$.
\end{proposition}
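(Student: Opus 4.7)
For \eqref{x splits u}, I would apply the centerpoint theorem in $\R^3$ (every compactly supported probability measure admits a point through which every closed half-space carries mass at least $\tfrac14$) to the non-negative integrable density $f_t(x) := |\ntx u(t,x)|^{3p/(p+2)}$. By Lemma~\ref{L:u' nontriviality}, $\|f_t\|_{L^1_x}\gtrsim_u 1$ uniformly in $t$; by \eqref{E:u' compact}, also $\|f_t\|_{L^1_x}\lesssim_u 1$. The resulting centerpoint $\tilde x(t)$ immediately satisfies \eqref{x splits u} with $1/C_u \gtrsim_u 1$. To check that $\tilde x(t)$ still obeys the almost-periodicity clauses of Definition~\ref{D:ap} (after enlarging the compactness modulus function), I would show $|\tilde x(t)-x(t)| \lesssim_u N(t)^{-1}$: pick $\eta$ small enough that $\eta^{3p/(p+2)} < \tfrac{1}{8}\|f_t\|_{L^1_x}$ for every $t$ (possible by the uniform lower bound), so by \eqref{E:u' compact} less than $\tfrac18$ of the $f_t$-mass lies outside $B(x(t),C(\eta)/N(t))$; if $\tilde x(t)$ were farther than $C(\eta)/N(t)$ from $x(t)$, one could produce a half-space through $\tilde x(t)$ avoiding this ball and thus of mass $<\tfrac14\|f_t\|_{L^1_x}$, contradicting the centerpoint property.

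For \eqref{x is Lip}, the bound $|\tilde x(t)-x(t)| \lesssim_u N(t)^{-1}$ just established reduces the task to proving the Lipschitz inequality for any $x(t)$ satisfying Definition~\ref{D:ap}. I would argue by contradiction from finite speed of propagation for \eqref{nlw}, supposing $|x(t_1)-x(t_2)|>|t_1-t_2| + AN(t_1)^{-1} + AN(t_2)^{-1}$ for some large $A$ to be determined. By precompactness of the normalized orbit in $\dot H^{s_c}_x\times \dot H^{s_c-1}_x$, a smooth cutoff of $(u(t_1),u_t(t_1))$ to $B(x(t_1),R_\eta/N(t_1))$ introduces an error in $\dot H^{s_c}_x\times \dot H^{s_c-1}_x$ of norm at most $\eta$, uniformly in $t_1$, provided $R_\eta$ is taken large enough. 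Letting $\tilde u$ solve \eqref{nlw} with the truncated data, finite speed of propagation confines $\tilde u(t_2)$ to $B(x(t_1),R_\eta/N(t_1)+|t_2-t_1|)$, which our contradiction hypothesis (choosing $A$ large compared to $R_\eta$ and $C(\eta)$) places disjoint from $B(x(t_2),C(\eta)/N(t_2))$. Perturbation theory for \eqref{nlw} then makes $u(t_2)$ close to $\tilde u(t_2)$ in $\dot H^{s_c}_x\times \dot H^{s_c-1}_x$, hence in $L^{3p/2}_x\times L^{3p/(p+2)}_x$ via Sobolev embedding, so $\|\ntx u(t_2)\|_{L^{3p/(p+2)}_x(B(x(t_2),C(\eta)/N(t_2)))}$ is small; but Lemma~\ref{L:u' nontriviality} combined with \eqref{E:u' compact} produces a uniform lower bound $\gtrsim_u 1$ on this very quantity, contradiction.

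The principal obstacle is the perturbation step: the smallness threshold for the data perturbation depends on $\|u\|_{L^{2p}_{t,x}([t_1,t_2]\times\R^3)}$, which may grow unbounded with $|t_1-t_2|$ since $u$ is not assumed to scatter, so a direct application would force $A$ to depend on the time separation. To keep $A$ universal, I would partition $[t_1,t_2]$ into subintervals of length $\sim N(t)^{-1}$ on which local well-posedness furnishes uniform spacetime bounds, apply the above argument on each to obtain a slope-$1$ short-time bound, and chain them together to conclude.
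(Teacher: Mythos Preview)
Your approach to \eqref{x splits u} via the centerpoint theorem is a valid alternative to the paper's argument and in some ways cleaner.  The paper instead defines $\tilde x(t)$ as the center of mass of $|\ntx u(t)|^{3p/(p+2)}$ restricted to a ball $B(x(t),C/N(t))$ carrying a definite fraction of the total mass, and then combines the resulting mean-zero condition with Lemma~\ref{L:strip} (smallness on narrow strips) to obtain the half-space lower bound.  Your route sidesteps Lemma~\ref{L:strip} entirely at the cost of importing a theorem from combinatorial geometry; either works.

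Your approach to \eqref{x is Lip}, however, has a genuine gap that your proposed fix does not close.  You correctly identify that applying perturbation theory on all of $[t_1,t_2]$ forces the smallness threshold, and hence $A$, to depend on $S_{[t_1,t_2]}(u)$, which is not uniformly bounded.  But chaining short-time bounds does not recover the statement: if on each subinterval $[s_k,s_{k+1}]$ of length $\sim N(s_k)^{-1}$ you obtain $|x(s_k)-x(s_{k+1})|\le |s_k-s_{k+1}| + A N(s_k)^{-1} + A N(s_{k+1})^{-1}$, then summing over $k$ produces an accumulated error $\sim A\sum_k N(s_k)^{-1}\sim A|t_1-t_2|$.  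You end up with $|x(t_1)-x(t_2)|\lesssim_u |t_1-t_2|$ rather than slope exactly $1$, and the slope-$1$ constant is essential everywhere \eqref{x is Lip} is used downstream (most critically in the proof of sub-luminality, Proposition~\ref{P:subluminal}).

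The paper avoids perturbation theory altogether by cutting off in the opposite direction.  Rather than truncating $(u,u_t)(t_1)$ to the ball around $x(t_1)$ and comparing the resulting large-data solution to $u$, it multiplies by a cutoff that \emph{vanishes} on this ball and equals $1$ outside.  By almost periodicity these exterior data have $\dot H^{s_c}_x\times\dot H^{s_c-1}_x$ norm $\le\eta$, so the small-data theory furnishes a \emph{global} solution with all Strichartz norms $\lesssim\eta$.  Domain of dependence then shows this exterior solution agrees \emph{exactly} with $u$ on $\{|x-x(t_1)|\ge |t-t_1|+C(\eta)/N(t_1)\}$ for every $t$, so in particular $\|\ntx u(t_2)\|_{L^{3p/(p+2)}_x}\lesssim\eta$ on this set, with no perturbation step and no dependence on $|t_1-t_2|$ or on $S_{[t_1,t_2]}(u)$.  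Combining this with the concentration of $\ntx u(t_2)$ near $x(t_2)$ (from \eqref{E:u' compact} and Lemma~\ref{L:u' nontriviality}) forces the exterior-cone complement and the ball $B(x(t_2),C(\eta)/N(t_2))$ to intersect, which is exactly \eqref{x is Lip}.
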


Before proceeding to the proof of this proposition, we pause to make the following intuition precise: Compactness
(modulo scaling) prevents the solution $u(t)$ from concentrating on very narrow strips, provided the width is measured in
units of $N(t)^{-1}$.

\begin{lemma} [Small on narrow strips]\label{L:strip}
Let $u$ be a global almost periodic solution to \eqref{nlw}.  Then for any $\eta>0$ there exists a small
constant $c(\eta)>0$ so that
\begin{equation}\label{E:strip}
\sup_{\omega\in S^2}\int_{|\omega\cdot(x-x(t))|<c(\eta)/N(t)} |\ntx u(t,x)|^{\frac{3p}{p+2}}\,dx \leq \eta.
\end{equation}
\end{lemma}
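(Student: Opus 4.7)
The plan is to rescale to a fixed frequency scale and then extract the small-strip bound directly from the precompactness of the normalized orbit.

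\emph{Scale invariance.} Setting $q := \tfrac{3p}{p+2}$, Sobolev embedding gives $\dot H^{s_c-1}_x(\R^3) \hookrightarrow L^q_x(\R^3)$, and a direct computation shows that the $L^q_x$-norm of $\ntx u$ is invariant under the symmetries incorporated in the normalization \eqref{E:Normalization}. Concretely, the change of variable $y = x(t) + x/N(t)$ transforms the left side of \eqref{E:strip} into
\[
    \int_{|\omega \cdot y| < c(\eta)} \bigl|\ntx u^{[t]}(0, y)\bigr|^q \, dy,
\]
so it suffices to exhibit, for each $\eta > 0$, a constant $c > 0$ such that this integral is $\leq \eta$ for every $t \in I$ and every $\omega \in S^2$.

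\emph{Precompactness.} By the second remark following Definition~\ref{D:ap}, the normalized orbit $\{(u^{[t]}(0), \partial_t u^{[t]}(0)) : t \in I\}$ is precompact in $\dot H^{s_c}_x \times \dot H^{s_c-1}_x$. Applying $\ntx$ and invoking the Sobolev embedding above shows that the family
\[
    K := \bigl\{ \ntx u^{[t]}(0) : t \in I \bigr\}
\]
is precompact in $L^q_x(\R^3)$.

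\emph{Net argument.} Given $\eta > 0$, I would choose an $\eps$-net $f_1, \dots, f_N \in L^q_x$ for $K$ with $\eps$ sufficiently small relative to $\eta^{1/q}$. For each $f_k$ fix $R_k$ with $\int_{|y|>R_k} |f_k|^q\,dy < \eta/4$. The strip $\{|\omega\cdot y| < c\} \cap \{|y| \leq R_k\}$ has Lebesgue measure $\lesssim c R_k^2$ \emph{uniformly} in $\omega \in S^2$, so absolute continuity of $|f_k|^q\,dy$ yields $c_k > 0$ with $\int_{|\omega\cdot y|<c_k} |f_k|^q\,dy < \eta/2$ for every $\omega$. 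Taking $c := \min_k c_k > 0$ and invoking the $L^q_x$-triangle inequality on the strip to compare any $f \in K$ with the nearest $f_k$ delivers the desired uniform bound.

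The only mild subtlety is decoupling uniformity in the direction $\omega \in S^2$ from uniformity in the time parameter $t \in I$: the former is handled because strip measures depend on $\omega$ only through the width $c$, so a single per-$f_k$ absolute-continuity argument suffices, while the latter reduces to the standard total-boundedness argument made possible by the precompactness of $K$. No equation-specific input beyond almost periodicity modulo symmetries is needed.
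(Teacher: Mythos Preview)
Your proof is correct and follows essentially the same approach as the paper: rescale to the normalized orbit, use precompactness of $\{\ntx u^{[t]}(0)\}$ in $L^{3p/(p+2)}_x$, and pass from a single function to the whole orbit by a compactness/$\eps$-net argument. The paper compresses all of this into two sentences (``Monotone Convergence for a single $t$; compactness for the full orbit''), whereas you spell out the scaling computation and, more usefully, give an explicit mechanism (absolute continuity together with the uniform strip-measure bound $\lesssim c R_k^2$) for the uniformity in $\omega\in S^2$ that the paper leaves to the reader.
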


\begin{proof}
For a single value of $t$ this follows from the Monotone Convergence Theorem; it extends to the full orbit of $\ntx u(t)$
by compactness.
\end{proof}

Our first application of this lemma is to the proof of Proposition~\ref{P:Standard}; we will use it again in the proof
of Proposition~\ref{P:subluminal}.

\begin{proof}[Proof of Proposition~\ref{P:Standard}]
We first prove \eqref{x is Lip}.  As the veracity of this equation will be deduced from the properties stated in
Definition~\ref{D:ap}, it will be equally valid for the modified version of $x(t)$ which will be defined in due course.

Choose $\eta>0$ to be a small number well below the $\dot H^{s_c}_x\times\dot H^{s_c-1}_x$ threshold for the small data theory.
By Definition~\ref{D:ap}, there is a constant $C(\eta)$ so that
\begin{equation}\label{sub:tight}
\bigl\|  \phi\bigl(\tfrac{x-x(t_1)}{C(\eta) N(t_1)^{-1} }\bigr) u(t_1,x) \bigr\|_{\dot H^{s_c}}
    + \bigl\|  \phi\bigl(\tfrac{x-x(t_1)}{C(\eta) N(t_1)^{-1} }\bigr) u_t(t_1,x) \bigr\|_{\dot H^{s_c-1}}  \leq \eta
\end{equation}
for some smooth cutoff $\phi:\R^3\to[0,\infty)$ with $\phi(x)=1$ for $|x|\geq 1$ and $\phi(x)=0$ for $|x|\leq
\tfrac12$. Thus, by the small data theory, there is a global solution to \eqref{nlw} whose Cauchy data at time $t_1$ match the
combination of $\phi$ and $u$ given in \eqref{sub:tight}.  Moreover, per the small data theory, each critical
Strichartz norm of this solution is controlled by a ($p$-dependent) multiple of $\eta$. By simple domain of dependence
arguments, this new solution agrees with the original $u$ on the set
$$
\Omega(t):=\{x:|x-x(t_1)| \geq |t-t_1| + C(\eta)/N(t_1) \} \qtq{for all} t\in \R
$$
and hence, by Sobolev embedding,
\begin{align*}
\bigl\| \ntx u(t) \bigr\|_{L^{\frac{3p}{p+2}}_x(\Omega(t))} \lesssim \eta \qtq{for all} t\in \R.
\end{align*}
Now consider this estimate and \eqref{E:u' compact} with $t=t_2$ and $\eta$ much less than half the
minimal $L^{3p/(p+2)}_x$ norm of $\ntx u(t)$, over time; this minimum is positive by virtue of Lemma~\ref{L:u' nontriviality}. Thus
we may deduce that
$$
\{ x : |x-x(t_1)| \leq |t_2-t_1| + C(\eta)/N(t_1) \} \cap  \{ x : |x-x(t_2)| \leq C(\eta)/N(t_2) \} \neq \varnothing,
$$
from which \eqref{x is Lip} follows.

We now turn to the proof of \eqref{x splits u}.  First, fix $C>0$ so that $B(t):=\{|x-x(t)| \leq C/N(t)\}$ obeys
\begin{equation}\label{sub:standardstar}
\int_{B(t)} \bigl| \ntx u(t,x)\bigr|^{\frac{3p}{p+2}} \, dx \gtrsim_u 1
\end{equation}
uniformly for $t\in\R$. This is possible by virtue of Lemma~\ref{L:u' nontriviality} and \eqref{E:u' compact}.  Now set
$$
\tilde x(t) := x(t) + \frac{\int_{B(t)} [x-x(t)]  \, | \ntx u(t,x) |^{\frac{3p}{p+2}} \, dx}{\int_{B(t)} | \ntx u(t,x) |^{\frac{3p}{p+2}} \, dx}.
$$
This definition immediately implies $|\tilde x(t) - x(t)| \leq C/N(t)$; thus, $\tilde x(t)$ maintains the properties
stated in Definition~\ref{D:ap}, though the compactness modulus function $C(\eta)$ may need to be increased, say by the
addition of $C$.  In particular, \eqref{x is Lip} remains valid after suitable increase in the constant $C_u$.

By construction,
$$
\int_{B(t)} \omega\cdot [x-\tilde x(t)] \, \bigl|\ntx u(t,x)\bigr|^{\frac{3p}{p+2}} \, dx =0
$$
for any (unit) vector $\omega\in S^2$, while by \eqref{sub:standardstar} and Lemma~\ref{L:strip},
$$
\int_{B(t)} \bigl|\omega\cdot [x-\tilde x(t)]\bigr| \, \bigl|\ntx u(t,x)\bigr|^{\frac{3p}{p+2}} \, dx  \gtrsim_u N(t)^{-1}.
$$
Putting these two results together  yields
$$
\int_{B(t)} \bigl\{\omega\cdot [x-\tilde x(t)]\bigr\}_+ \, \bigl|\ntx u(t,x)\bigr|^{\frac{3p}{p+2}} \, dx  \gtrsim_u N(t)^{-1},
    \qtq{where} \{y\}_+ = \max\{0,y\}.
$$
Therefore, as $x\in B(t)$ implies $|x-\tilde x(t)|\leq 2C N(t)^{-1}$, we have
$$
\int_{\omega\cdot(x-\tilde x(t))>0} \, \bigl|\ntx u(t,x)\bigr|^{\frac{3p}{p+2}}\,dx
    \geq \int_{B(t)} \frac{\{\omega\cdot [x-\tilde x(t)]\}_+}{2CN(t)^{-1}} \, \bigl|\ntx u(t,x)\bigr|^{\frac{3p}{p+2}}\,dx \gtrsim_u 1,
$$
which proves \eqref{x splits u}.
\end{proof}

\begin{proposition}[Global enemies are sub-luminal]\label{P:subluminal}
Let $u$ be a global almost periodic solution to \eqref{nlw} with $N(t)\geq 1$.  Then there exists $\delta=\delta(u)>0$ such that
\begin{align}\label{E:subluminal}
|x(t)-x(\tau)|\leq (1-\delta) |t-\tau| \quad \text{whenever} \quad |t-\tau|\geq \tfrac1\delta.
\end{align}
\end{proposition}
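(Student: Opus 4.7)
The plan is to argue by contradiction, exploiting the non-trivial splitting \eqref{x splits u} together with finite speed of propagation and the strong $3$D Huygens principle (Lemma~\ref{L:propagator}). Suppose \eqref{E:subluminal} fails. Then for each $n \geq 1$ there exist times $\tau_n, t_n \in \R$ with $|t_n - \tau_n| \geq n$ and $|x(t_n) - x(\tau_n)| > (1 - 1/n)|t_n - \tau_n|$. Using time-reversal if necessary, take $t_n > \tau_n$; after time translation set $\tau_n = 0$; after spatial translation set $x(0) = 0$; and write $T_n := t_n \to +\infty$. Passing to subsequences using compactness of $S^2$ and almost-periodicity, $x(T_n)/|x(T_n)| \to \omega_* \in S^2$ and $(u(0), u_t(0))$ converges in $\dot H^{s_c}_x \times \dot H^{s_c-1}_x$. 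Applying \eqref{x splits u} with outward normal $-\omega_*$ through the origin, then invoking Lemma~\ref{L:strip} with $\eta = 1/(4C_u)$ together with the spatial tightness of the orbit (guaranteed by $N(0) \geq 1$ and the compactness formulation in Definition~\ref{D:ap}), I would extract constants $c, R_0, \eta_0 > 0$, depending only on $u$, such that
$$
\int_{\{\omega_* \cdot y \leq -c\} \cap B(0, R_0)} |\ntx u(0, y)|^{3p/(p+2)} \, dy \geq \eta_0. \qquad (\star)
$$
Meanwhile, at time $T_n$, Lemma~\ref{L:u' nontriviality} combined with tightness (using $N(T_n) \geq 1$) provides a fixed radius $R_1$ such that the bulk of the $L^{3p/(p+2)}$ mass of $\ntx u(T_n)$ sits in $B(x(T_n), R_1)$, while the tail outside is arbitrarily small.

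The core idea is then to show the backward mass $(\star)$ cannot be reconciled with the concentration of $\ntx u(T_n)$ near $x(T_n) \approx T_n \omega_*$. Finite speed of propagation for NLW restricts the reach of the backward mass by time $T_n$ to $\{\omega_* \cdot z \leq -c + T_n\}$; comparing with $\omega_* \cdot x(T_n) \geq (1 - 1/n) T_n$, the point $x(T_n)$ lies outside this set exactly when $T_n/n < c$. In this ``short-time'' regime, a small-data cutoff argument (mimicking the technique used in the proof of Proposition~\ref{P:Standard}) localizes the solution and produces the contradiction. In the complementary ``long-time'' regime $T_n/n \geq c$, I would appeal to the strong $3$D Huygens principle (Lemma~\ref{L:propagator}) together with the dispersive estimate (Lemma~\ref{L:Dispersive estimate}): compactly supported initial data propagates linearly into a thin spherical shell of radius $T_n$, with pointwise density decaying like $T_n^{-1}$. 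The nonlinear analog would be obtained from the no-waste Duhamel formula (Lemma~\ref{L:duhamel}), decomposing $F(u(s))$ into contributions near the Lipschitz trajectory $s \mapsto x(s)$ (Proposition~\ref{P:Standard}) versus elsewhere, and bounding the latter via the energy flux inequality (Lemma~\ref{L:energy flux}).

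The principal obstacle is the nonlinear step: since NLW is not superposable, the ``backward piece'' of $u(0)$ cannot be tracked as an independent wave. The cutoff-plus-small-data technique of Proposition~\ref{P:Standard}, which successfully localizes the \emph{tail} of the solution because tightness makes that tail small in $\dot H^{s_c}_x$, does not apply to the backward piece $(\star)$, which carries non-trivial mass. I expect the resolution to require a careful Duhamel-based decomposition that exploits the Huygens geometry and mismatch estimates (Lemma~\ref{L:mismatch}) to bound cross-interactions, using that the backward light cone through $(T_n, x(T_n))$ intersects the sub-luminal trajectory of $x(\cdot)$ only in a thin sliver for $T_n$ large. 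The delicate tension between the persistence of the backward mass and the Huygens-induced dispersion near $x(T_n)$ is where I anticipate the bulk of the technical work.
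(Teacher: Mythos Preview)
Your proposal has the right instincts about which tools matter (finite speed of propagation, the small-data cutoff technique of Proposition~\ref{P:Standard}, the energy flux inequality) but the overall architecture is wrong, and the gap you identify as the ``principal obstacle'' is in fact fatal to this approach.  The contradiction setup forces $T_n\to\infty$, and your own arithmetic shows the ``short-time regime'' $T_n/n<c$ is vacuous once $n$ is large (since $T_n\geq n$ gives $T_n/n\geq 1>c$).  So you are always in the long-time regime, where you propose to track the backward mass $(\star)$ forward to time $T_n$ via Duhamel, Huygens, and dispersive decay.  But $T_n^{-1}$ decay is not summable, the nonlinearity can redistribute mass arbitrarily over unbounded times, and the cutoff trick only localizes \emph{small} pieces of the solution---there is no mechanism here to force the backward mass to remain disjoint from a neighbourhood of $x(T_n)$.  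The mismatch and Duhamel machinery you gesture at is designed for a different purpose (Sections~\ref{S:AddnlDecay}--\ref{S:finite E}) and presupposes sub-luminality rather than proving it.

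The paper avoids long-time tracking entirely by a local-to-global reduction: it suffices to find $A=A(u)$ so that on \emph{every} interval $[t_0,t_0+A N(t_0)^{-1}]$ there is at least one time $t$ with $|x(t)-x(t_0)|\leq |t-t_0|-A^{-1}N(t_0)^{-1}$; chaining such ``slow steps'' along a mesh $\{t_k\}$ gives \eqref{E:subluminal} via the triangle inequality and \eqref{x is Lip}.  This reduces everything to intervals of bounded length (in units of $N(t_0)^{-1}$), where two mechanisms operate.  If $N$ varies by a large factor on the interval, a small-data cutoff argument (Lemma~\ref{L:live fast}, close in spirit to your short-time idea) forces a slow step.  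If instead $N$ is essentially constant, one argues by contradiction: near-luminal motion confines $x(t)$ to the annulus $\{||x|-t|\lesssim N(0)^{-1}\}$, and Corollary~\ref{C:pot conc} then gives $\int_0^{A/N(0)}\int_{||x|-t|\lesssim N(0)^{-1}}|u|^{p+2}\,dx\,dt\gtrsim_u A\,N(0)^{4/p-2}$, growing \emph{linearly} in $A$.  But the energy flux inequality (Lemma~\ref{L:energy flux}), after averaging over base points, bounds the same quantity by $\lesssim_u A^{1-4/p}N(0)^{4/p-2}$.  For $A$ large this is a contradiction.  The comparison of these two growth rates in $A$ is the heart of the argument, and it is invisible from your $T_n\to\infty$ framework.
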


The proof of this proposition splits into two cases depending on whether or not $N(t)$ varies significantly over the
time interval between $t$ and $\tau$.   Before turning to the main part of the proof of Proposition~\ref{P:subluminal},
we present the key ingredient in the case of significant variation as a lemma:

\begin{lemma}\label{L:live fast}
For almost periodic solutions $u$ to \eqref{nlw}, there exists $c=c(u)>0$ so that
\begin{equation}\label{E:live fast}
 |x(t_1) - x(t_2)| \geq |t_1-t_2| - c N(t_1)^{-1} \quad\implies\quad N(t_2) \leq c^{-2} N(t_1).
\end{equation}
For a non-vacuous statement, we assign the names $t_1$ and $t_2$ so that $N(t_1)\leq N(t_2)$.
\end{lemma}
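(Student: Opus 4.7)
I would argue by contradiction, combining backward finite speed of propagation from $t_2$ with the splitting and narrow-strip properties of almost-periodic solutions (Proposition~\ref{P:Standard} and Lemma~\ref{L:strip}). Assume WLOG $t_2>t_1$, apply the scaling symmetry \eqref{scaling} and translation so that $N(t_1)=1$, $x(t_1)=0$, $t_1=0$, and write $T = t_2 > 0$, $d = |x(T)|$, $M = N(T)$, $\omega = x(T)/d \in S^2$. The hypothesis reads $d \geq T - c$ and the conclusion I wish to exclude is $M > c^{-2}$. The case of small $T$ (below the local-constancy scale of $N$ from \eqref{E:N loc constant}) forces $M$ uniformly bounded by a $u$-dependent constant, so that regime is handled by choosing $c$ small; I focus on $T$ large.

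Fix $\eta>0$ small (below the small-data threshold) and let $C_\eta$ denote the compactness modulus for $u$. Using a smooth cutoff, split $(u(T), u_t(T))$ into an inner part supported in $B_2 := \{y: |y-x(T)|\leq C_\eta/M\}$ and an outer remainder of $\dot H^{s_c}\times \dot H^{s_c-1}$ norm at most $\eta$. The outer remainder evolves backward under \eqref{nlw} to a global solution $\tilde w$ satisfying $\|\ntx \tilde w\|_{L^\infty_t L^{3p/(p+2)}_x} \lesssim \eta$ (by the small-data Cauchy theory of Theorem~\ref{T:local}, the Strichartz estimates of Lemma~\ref{L:Strichartz}, and the Sobolev embedding $\dot H^{s_c-1} \hookrightarrow L^{3p/(p+2)}$). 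Finite speed of propagation applied to $u-\tilde w$ yields $u=\tilde w$ outside the backward cone of $B_2$, so at $t=0$,
\[
\bigl\|\ntx u(0)\bigr\|_{L^{3p/(p+2)}_x(\{|y - x(T)| > T + C_\eta/M\})} \lesssim \eta.
\]

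The key geometric step is to show that the half-space $H := \{y \in \R^3 : \omega \cdot y < -c(\eta)\}$, where $c(\eta)$ is the constant from Lemma~\ref{L:strip}, is contained in the exterior region above, provided $c$ is chosen small enough. Indeed, for $y \in H$ one expands $|y - d\omega|^2 = |y|^2 - 2d(\omega \cdot y) + d^2 \geq 2d\,c(\eta) + d^2$; using $d \geq T - c$ and $C_\eta/M \leq C_\eta c^2$ from $M \geq c^{-2}$, the desired inequality $|y - x(T)|^2 > (T + C_\eta/M)^2$ reduces to $2T[c(\eta) - c - C_\eta/M] - 2c\,c(\eta) + O(1/M^2) > 0$, which holds for $c$ small compared to $c(\eta)$ once $T \gtrsim c(\eta)$. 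Consequently $\|\ntx u(0)\|_{L^{3p/(p+2)}(H)} \lesssim \eta$.

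To close, I apply the splitting property \eqref{x splits u} of Proposition~\ref{P:Standard} with $-\omega$ at $t=0$ to obtain $\int_{\omega \cdot y < 0} |\ntx u(0, y)|^{3p/(p+2)}\,dy \geq 1/C_u$, then subtract Lemma~\ref{L:strip}'s estimate $\int_{|\omega\cdot y| < c(\eta)} |\ntx u(0, y)|^{3p/(p+2)}\,dy \leq \eta$, yielding $\|\ntx u(0)\|_{L^{3p/(p+2)}(H)}^{3p/(p+2)} \geq 1/C_u - \eta$. Taking $\eta$ small enough (depending only on $u$) contradicts the previous display, completing the proof. The main obstacle is the geometric inclusion step: one must carefully balance the compactness modulus $C_\eta$, the strip constant $c(\eta)$, the hypothesis parameter $c$, and the lower bound $M \geq c^{-2}$ so that $H$ sits strictly outside the backward cone of $B_2$; once this inclusion is in place, the splitting/strip contradiction closes the argument with essentially no further work.
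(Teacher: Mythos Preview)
Your argument is correct and follows the same overall strategy as the paper: argue by contradiction, propagate the small tail of $(u,u_t)$ at $t_2$ backward via the small-data theory and finite speed of propagation, and obtain a contradiction with the combination of \eqref{x splits u} and Lemma~\ref{L:strip} at $t_1$.

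The one difference worth noting is the shape of the cutoff at $t_2$. You localize to a \emph{ball} $\{|y-x(t_2)|\leq C_\eta/M\}$, which forces you into the geometric inclusion step (showing the half-space $H$ lies outside the backward cone of that ball) and into a separate treatment of small $T$ via local constancy of $N$. The paper instead uses a \emph{half-space} cutoff $\psi\bigl(\tfrac{x_1-x_1(t_2)}{cN(t_1)^{-1}}\bigr)$ in the direction of $x(t_2)-x(t_1)$; under finite speed of propagation a half-space at $t_2$ maps directly to a half-space at $t_1$, namely $\Omega=\{x_1\leq x_1(t_2)-(t_2-t_1)-cN(t_1)^{-1}\}$, and the hypothesis $|x(t_2)|\geq |t_2-t_1|-cN(t_1)^{-1}$ immediately gives $\Omega\supseteq\{(-e_1)\cdot x\geq 2cN(t_1)^{-1}\}$. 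This eliminates both your geometric computation and the small-$T$ case. Incidentally, your inclusion argument can itself be sharpened: the infimum of $|y-d\omega|$ over $H$ is exactly $d+c(\eta)$, so you only need $d+c(\eta)>T+C_\eta/M$, i.e.\ $c(\eta)>c+C_\eta c^2$, which holds for all $T$ once $c$ is small; the large-$T$ restriction is an artifact of dropping the $|y|^2$ term too early.
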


\begin{proof}
By time-reversal symmetry, we may assume that $t_1<t_2$.  By space translation symmetry, we set $x(t_1)=0$ and by
rotation symmetry, we assume $x(t_2)=(x_1(t_2),0,0)$ with $x_1(t_2)\geq 0$.

Assume, toward a contradiction, that $cN(t_1)^{-1} \geq c^{-1} N(t_2)^{-1}$.  Then, by choosing $c$ small enough (depending on
$\eta$) and invoking the almost periodicity of $u$, we obtain
\begin{equation}\label{sub:tight again}
\bigl\| \psi\bigl( \tfrac{x_1-x_1(t_2)}{cN(t_1)^{-1}}\bigr) u(t_2,x) \bigr\|_{\dot H^{s_c}_x}
    + \bigl\| \psi\bigl( \tfrac{x_1-x_1(t_2)}{cN(t_1)^{-1}}\bigr) u_t (t_2,x) \bigr\|_{\dot H^{s_c-1}_x}  \leq \eta
\end{equation}
for some smooth cutoff $\psi:\R\to[0,\infty)$ with $\psi(x)=1$ for $x\leq -1$ and $\psi(x)=0$ for $x\geq -1/2$. Here
$\eta$ is chosen below the threshold for the small data theory.  Using this theory and simple domain of dependence
arguments, we may deduce that
\begin{equation}\label{sub:back at t1}
\int_{\Omega} \bigl| \ntx u(t_1,x) \bigr|^\frac{3p}{p+2} \,dx \lesssim \eta^\frac{3p}{p+2}
    \qtq{where} \Omega:=\bigl\{ x: x_1 \leq x_1(t_2) - (t_2-t_1) - cN(t_1)^{-1} \bigr\}.
\end{equation}
Now by LHS\eqref{E:live fast} and the standardizations introduced at the beginning of this proof,
$$
\Omega \supseteq \bigl\{ x: (- e_1) \cdot (x - x(t_1)) \geq 2cN(t_1)^{-1} \bigr\},
$$
with the obvious consequence for the $L^{3p/(p+2)}_x$ norm of $\ntx u(t_1)$ on this set.  Making $\eta$ small enough and
then $c$ small enough, we deduce a contradiction to the combination of Lemma~\ref{L:strip} and \eqref{x splits u}.
\end{proof}

\begin{proof}[Proof of Proposition~\ref{P:subluminal}]
We claim that it suffices to show that there exists $A=A(u)>1$ so that for all $t_0\in\R$ there exists
$t\in[t_0,t_0+AN(t_0)^{-1}]$ so that
\begin{equation}\label{E:sub star}
|x(t) - x(t_0)| \leq |t-t_0| - A^{-1} N(t_0)^{-1}.
\end{equation}
Indeed, with this claim in hand, we may inductively construct a sequence of times $\{t_k\}$ so that $t_0=0$,
$0<t_{k+1}-t_k \leq AN(t_k)^{-1}$, and
\begin{align*}
|x(t_m) - x(t_l)| &\leq \sum_{k=l}^{m-1} |t_{k+1}-t_k| - A^{-1} N(t_k)^{-1} \\
& \leq \sum_{k=l}^{m-1} (1 - A^{-2}) |t_{k+1}-t_k|
    \leq (1-A^{-2}) |t_m-t_l|.
\end{align*}
We may deduce the result for values of $t$ and $\tau$ lying in $[0,\infty)$ between these sample points by applying
\eqref{x is Lip}.  Note that this requires choosing $\delta\leq \frac12 A^{-2} (A+2C_u)^{-1}$, where $C_u$ is as in \eqref{x is Lip}. By
employing time-reversal symmetry, one similarly obtains the result for $t,\tau\in(-\infty,0]$ and thence for all pairs
of times via the triangle inequality.

We now turn to verifying the claim made at the beginning of this proof.  Let $c$ be as in Lemma~\ref{L:live fast}.  If
$N(t) > c^{-2} N(t_0)$ for some $t\in[t_0,t_0+AN(t_0)^{-1}]$, then by that lemma,
$$
|x(t) - x(t_0)| \leq |t-t_0| - cN(t_0)^{-1} \leq |t-t_0| - A^{-1} N(t_0)^{-1},
$$
provided we ensure $A\geq c^{-1}$.  This settles this case.  Suppose now that $N(t) < c^{2} N(t_0)$ for some
$t\in[t_0,t_0+AN(t_0)^{-1}]$.  Then by Lemma~\ref{L:live fast},
$$
|x(t) - x(t_0)| \leq |t-t_0| - cN(t)^{-1} \leq |t-t_0| - c^{-1}N(t_0)^{-1} \leq |t-t_0| - A^{-1} N(t_0)^{-1},
$$
provided $A$ is chosen so that $A\geq c$.  This settles this case.

It remains to verify our claim in the case
\begin{equation}\label{E:sub N const}
c^2 \leq \frac{N(t)}{N(t_0)} \leq c^{-2} \qtq{for all} t\in[t_0,t_0+AN(t_0)^{-1}],
\end{equation}
for which we will argue by contradiction.  For notational convenience, we translate so that $t_0=0$ and $x(t_0)=0$.  Now by assuming that \eqref{E:sub star}
fails and making use of \eqref{x is Lip} and \eqref{E:sub N const}, we deduce that
$$
 t\in[0, AN(0)^{-1}] \implies \bigl| |x(t)| - t \bigr| \leq B N(0)^{-1}
$$
for some $B=B(u)\geq C_u (1+c^{-2}) +1$, where $C_u$ is as in \eqref{x is Lip}.  By enlarging $B$ and using \eqref{E:sub N const}, we can ensure that
$$
\bigl\{\bigl| |x|-t\bigr|\leq B/N(0)\bigr\}\supseteq \{ |x-x(t)|\leq C/N(t)\bigr\},
$$
with $C$ as in Corollary~\ref{C:pot conc}.  Using this corollary, it follows that
\begin{align}\label{comp}
\int_{B/N(0)}^{A/N(0)} \int_{||x|-t| \leq B/N(0)} |u(t,x)|^{p+2}\, dx\,dt \gtrsim_u (A-B) N(0)^{\frac4p - 2},
\end{align}
whenever $A\geq B + c^{-2}$.

On the other hand, we can obtain an upper bound on LHS\eqref{comp} from the energy flux identity.  As a first step, we observe that by
Lemma~\ref{L:energy flux} we have
\begin{align*}
\int_{\R^3} \chi_{\{|x|\leq 2B/N(0)\}} \int_{B/N(0)}^{A/N(0)} \int_{|x-y|=t} |u(t,y)|^{p+2} \,dS(y)\,dt\,dx \lesssim_u A^{1-\frac4p} B^3 N(0)^{\frac4p - 4}.
\end{align*}
To continue, we change variables via $y=x+z$ and then $x=x'-z$ to obtain
\begin{align*}
\int_{B/N(0)}^{A/N(0)} \int_{\R^3}\int_{|z|=t} |u(t,x')|^{p+2} \chi_{\{|x'-z|\leq 2B/N(0)\}} \,dS(z)\,dx'\,dt
    \lesssim_u A^{1-\frac4p} B^3 N(0)^{\frac4p - 4}.
\end{align*}
Noting that
$$
\int_{|z|=t} \chi_{\{|x'-z|\leq 2L\}} \,dS(z) \gtrsim L^2 \qtq{when} \bigl||x'|-t\bigr|\leq L \qtq{and} |t|\geq L
$$
for any $L>0$ and hence for $L=B/N(0)$, we are lead to
\begin{align*}
\int_{B/N(0)}^{A/N(0)} \int_{||x'|-t| \leq B/N(0)} |u(t,x')|^{p+2} \,dx'\,dt \lesssim_u A^{1-\frac4p} B N(0)^{\frac4p - 2}.
\end{align*}
To finish the proof, we merely note that this contradicts \eqref{comp} once $A$ is chosen sufficiently large.
\end{proof}

%
%
%
%

\section{Additional decay}\label{S:AddnlDecay}

In this section we prove additional decay for the soliton-like and frequency-cascade solutions described in Theorem~\ref{T:enemies}.

\begin{proposition}[$L^q$ breach of scaling]\label{P:L^p breach}
Let $u$ be a global solution to \eqref{nlw} that is almost periodic modulo symmetries.  In particular,
\begin{align}\label{Hs bounded}
\bigl\|(u,u_t)\bigr\|_{L_t^\infty(\R; \dot H^{s_c}_x\times\dot H_x^{s_c-1})}<\infty.
\end{align}
Assume also that
\begin{align}\label{inf N bounded}
\inf_{t\in \R} N(t)\geq 1.
\end{align}
Then $u\in L_t^\infty L_x^q$ for $\frac{3p^2+20p-16}{6p}<q\leq \frac{3p}2$.  In particular, $u\in L_t^\infty L_x^p$
(as $p\geq 6$) and by H\"older's inequality, $F(u) \in L_t^\infty L_x^1$.
\end{proposition}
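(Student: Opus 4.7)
The plan is to use the backward Duhamel formula from Lemma~\ref{L:duhamel} at $t=0$ combined with the strong Huygens principle (Lemma~\ref{L:propagator}) to obtain a pointwise representation of $u(0,x)$ as an integral of the nonlinearity over the backward light cone from $(0,x)$, then to split this integral at a scale $R>0$ and estimate the tail via the energy flux identity (Lemma~\ref{L:energy flux}) and the head via the a priori $L^{3p/2}_x$ control (available through \eqref{Hs bounded} and the Sobolev embedding $\dot H^{s_c}_x \hookrightarrow L^{3p/2}_x$). The hypothesis \eqref{inf N bounded} keeps the concentration scale $1/N(t)$ uniformly bounded and makes Proposition~\ref{P:subluminal} applicable.

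Concretely, Lemma~\ref{L:duhamel} and Lemma~\ref{L:propagator} together give
\begin{equation*}
|u(0,x)| \lesssim \int_0^\infty \frac{1}{r} \int_{|x-y|=r} |u(r,y)|^{p+1}\,dS(y)\,dr.
\end{equation*}
For the tail $r \geq R$, I would apply H\"older on each dyadic slab $[2^k R,\,2^{k+1} R]$ with conjugate pair $(p+2)/(p+1)$ and $p+2$; the first factor matches the energy flux integrand and is bounded by $(2^{k+1} R)^{1-4/p}$ via Lemma~\ref{L:energy flux}, while the second factor, after incorporating the spherical Jacobian $4\pi r^2$, reduces to $\int r^{-p}\,dr \sim (2^k R)^{1-p}$. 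The exponents combine cleanly as
\begin{equation*}
\tfrac{(1-4/p)(p+1)}{p+2} + \tfrac{1-p}{p+2} = -\tfrac{2}{p},
\end{equation*}
so each dyadic piece contributes $\lesssim_u (2^k R)^{-2/p}$ and the geometric series sums to
\begin{equation*}
\int_R^\infty \frac{1}{r} \int_{|x-y|=r} |u(r,y)|^{p+1}\,dS(y)\,dr \lesssim_u R^{-2/p},
\end{equation*}
uniformly in $x\in\R^3$.

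For the head $r \leq R$, I would take the $L^q_x$ norm of the pointwise bound directly. Minkowski reduces the problem to estimating $\||u(r,\cdot)|^{p+1} * \sigma_r\|_{L^q_x}$ integrated over $r\in[0,R]$, where $\sigma_r$ denotes surface measure on the sphere of radius $r$ centered at the origin. Young's inequality (using the total mass $4\pi r^2$ of $\sigma_r$) combined with the a priori bound $\|u(r)\|_{L^{3p/2}_x} \lesssim_u 1$, refined where necessary via the dispersive estimate of Lemma~\ref{L:Dispersive estimate} or the Strichartz estimates of Lemma~\ref{L:Strichartz}, yields an estimate polynomial in $R$. Balancing this head bound against the $R^{-2/p}$ tail and interpolating with the a priori $L^{3p/2}_x$ control (so that it suffices to obtain $L^{q_0}_x$ for a single $q_0<3p/2$) would then deliver the $L^q_x$ bound on the stated range; the precise threshold $(3p^2+20p-16)/(6p)$ emerges out of this optimization.

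The main obstacle I anticipate is the head estimate: a crude application of Young with the spherical measure loses too much to reach $q$ close to $3p/2$. Reaching the sharp threshold will likely require combining Strichartz and dispersive estimates more carefully, and possibly exploiting the subluminality from Proposition~\ref{P:subluminal} to restrict the effective region on the cone where $|u|$ is substantial.
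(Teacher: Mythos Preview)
Your tail estimate is exactly right and coincides with the paper's long-time bound \eqref{long ime}: the energy-flux inequality gives $\|\,\text{tail}\,\|_{L^\infty_x}\lesssim_u R^{-2/p}$, and interpolation with the a~priori $L^{3p/2}_x$ bound places the tail in $L^r_x$ for any $r\in[3p/2,\infty]$.  The difficulty is that this only gives exponents \emph{above} $3p/2$, and your head/tail balancing can never cross below the scaling exponent.  To land in $L^q_x$ for $q<3p/2$ you must interpolate against an endpoint with $q_0<3p/2$, and nothing in your scheme supplies one: $u(0)$ is not a~priori in any such space, and crude Young on the head forces $q(p+1)\le 3p/2$, which is far too small.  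So the head estimate is not merely ``not sharp enough'' --- without an additional idea it cannot close at all.

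What the paper does is work frequency by frequency.  For each dyadic $N\le N_0(\eta)$ one sets $A_r(N):=N^{3/r-2/p}\|u_N\|_{L^\infty_t L^r_x}$ with $r>3p/2$; Bernstein turns the short-time Duhamel piece into an $L^2_x$ estimate (this is the missing lower endpoint), and the smallness $\eta$ of $P_{\le N_0}u$ in $\dot H^{s_c}_x$ --- coming from almost periodicity together with $\inf N(t)\ge 1$ --- produces a recurrence of the form $A_r(N)\lesssim (N/N_0)^\gamma + \eta^2\sum_M K(N,M)A_r(M)^{p-1}$.  An acausal Gronwall lemma then yields power decay of $A_r(N)$ as $N\to 0$, and interpolating $\|u_N\|_{L^r_x}$ against $\|u_N\|_{L^2_x}$ (available only because of the frequency localization) gives summable decay of $\|u_N\|_{L^q_x}$ for $q$ in the stated range.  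The smallness parameter $\eta$ is indispensable; without it there is no contraction and no bootstrap.  Finally, Proposition~\ref{P:subluminal} plays no role in this argument --- it enters only in Sections~\ref{S:quant} and~\ref{S:finite E} --- so invoking it here is a red herring.
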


The remainder of this section is dedicated to the proof of Proposition~\ref{P:L^p breach}.

Let $u$ be a solution to \eqref{nlw} that obeys the hypotheses of Proposition~\ref{P:L^p breach}.  Let $\eta>0$ be a small constant to be
chosen later.  Then by almost periodicity modulo symmetries combined with \eqref{inf N bounded}, there exists $N_0=N_0(\eta)$ such that
\begin{align}\label{Hs small}
\bigl\||\nabla|^{s_c} P_{\leq N_0}u \bigr\|_{L_t^\infty L_x^2(\R\times\R^3)} + \bigl\||\nabla|^{s_c-1} P_{\leq N_0}u_t \bigr\|_{L_t^\infty L_x^2(\R\times
\R^3)}\leq \eta.
\end{align}

Now for $\frac{3p}2< r<\infty$ define
\begin{equation*}
A_r(N):= N^{\frac3r-\frac2p} \sup_{t\in \R} \|u_N(t)\|_{L_x^r}
\end{equation*}
for frequencies $N\leq 10p N_0$.  Note that by Bernstein's inequality combined with Sobolev embedding and \eqref{Hs bounded},
\begin{align}\label{A_r bdd}
A_r(N)\lesssim  \|u_N\|_{L_t^\infty L_x^{\frac{3p}2}} \lesssim \bigl\||\nabla|^{s_c} u\bigr\|_{L_t^\infty L_x^2} <\infty
\end{align}
for all $N\leq 10p N_0$.

We next prove a recurrence formula for $A_r(N$).

\begin{lemma}[Recurrence]\label{L:recurrence}
For $\frac{3p}2< r<\infty$ we have
\begin{align}\label{E:recurrence}
A_r(N)\lesssim_u \Bigl\{\bigl(\tfrac{N}{N_0}\bigr)^{1-\frac2p-\frac3r}
&+ \eta^2 \sum_{\frac{N}{10p}\leq M\leq N_0} \bigl(\tfrac{N}{M}\bigr)^{1-\frac2p-\frac3r-}A_r(M)^{p-1}\notag\\
&+ \eta^2 \sum_{M<\frac{N}{10p}} \bigl(\tfrac{M}{N}\bigr)^{\frac12+\frac3r-}A_r(M)^{p-1}\Bigr\}^{\frac{4r-6p}{r(p+4)}},
\end{align}
for all $N\leq 10p N_0$.
\end{lemma}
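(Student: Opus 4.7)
The plan is to derive \eqref{E:recurrence} by combining a Strichartz-type bound for $P_N u$ with a paraproduct expansion of $F(u)=u^{p+1}$.

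First, the linear setup. Using the no-waste Duhamel formula (Lemma~\ref{L:duhamel}) and applying Lemma~\ref{L:Strichartz} on compact intervals followed by a limiting argument (justified because $u$ is almost periodic and does not scatter), I would obtain
\[
\|P_Nu\|_{L_t^\infty L_x^{3p/2}(\R\times\R^3)} \lesssim \|P_N u\|_{L_t^\infty \dot H_x^{s_c}} + \|P_Nu_t\|_{L_t^\infty \dot H_x^{s_c-1}} + \bigl\||\nabla|^{s_c}P_NF(u)\bigr\|_{L_t^{2q/(q+6)}L_x^{q/(q-1)}},
\]
for a Strichartz exponent $q$ to be chosen. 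Since $r > \tfrac{3p}{2}$, Bernstein upgrades the $L^{3p/2}_x$ bound to an $L^r_x$ bound at the cost of $N^{2/p-3/r}$, so that $A_r(N)$ is controlled by the data plus a dual Strichartz norm of the nonlinearity. By \eqref{Hs small}, the data contribution is $\leq 2\eta$ for $N\leq N_0$; coupled with the a priori bound \eqref{A_r bdd} and Bernstein at the transition scale, this produces the boundary term $(N/N_0)^{1-2/p-3/r}$.

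Next, the nonlinear estimate. I would expand $F(u)=u^{p+1}$ via Littlewood--Paley, $u=\sum_M u_M$. For the output of $P_N$ to be non-trivial, at least one factor must have frequency $\gtrsim N/(10p)$. I would apply H\"older in space to place two of the factors in $L_x^{3p/2}$ (restricted to frequencies $\leq N_0$, where \eqref{Hs small}, Bernstein, and Sobolev embedding give a bound of $\eta$ on each, furnishing the $\eta^2$ prefactor) and the remaining $p-1$ factors in $L_x^r$, contributing $A_r(M)^{p-1}$ at a common scale $M$. The split into two sums corresponds to whether the ``dominant'' (highest) frequency is the scale $M$ of the bulk factors (which forces $M\geq N/(10p)$, giving the first sum) or a different, higher scale that is then summed out and leaves $M<N/(10p)$ as the scale of the $p-1$ bulk factors (the second sum). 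The weights $(N/M)^{1-2/p-3/r-}$ and $(M/N)^{1/2+3/r-}$ drop out of the Bernstein factors associated to the various frequency projections, where the small $0+$ loss can be absorbed by a judicious fractional shift in the Strichartz exponent $q$.

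The outer exponent $\frac{4r-6p}{r(p+4)}$ arises from an absorption/interpolation step: the H\"older-paraproduct estimate as described naturally produces, instead of $\|P_Nu\|_{L_x^r}$ directly, a mixed quantity in which one factor must be converted from $\|P_Nu\|_{L_t^\infty L_x^{3p/2}}$ back to $\|P_Nu\|_{L_t^\infty L_x^r}$ (i.e.\ to $A_r(N)$) via interpolation against a second Strichartz norm (such as $\|\nabla P_N u\|_{L_t^{4p/(p-4)}L_x^{4p/(p+4)}}$ from Lemma~\ref{L:Strichartz}). Solving the resulting self-bound on $A_r(N)$ for $A_r(N)$ yields precisely the exponent $\frac{4r-6p}{r(p+4)}$ on the bracketed expression.

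The main obstacle, as I foresee it, is the combinatorial/frequency bookkeeping: one must ensure that the two factors used to produce $\eta^2$ are legitimately frequency-localized below $N_0$ in all relevant paraproduct configurations (including high-high-to-low interactions that feed into the boundary term), that the $p-1$ bulk factors can genuinely be packaged into $A_r(M)^{p-1}$ at a common dyadic scale (with off-diagonal contributions handled by an $0+$ loss), and that the Strichartz exponent $q$ is chosen compatibly with the H\"older chain so that the algebraic exponents conspire to yield the stated recurrence. The sub-luminality and decay results from earlier sections are not needed here; this step is purely a Strichartz--paraproduct estimate tuned to the low-frequency smallness~\eqref{Hs small}.
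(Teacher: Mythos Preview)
Your proposal misses the key analytical ingredient of the paper's proof: the \emph{energy flux inequality} (Lemma~\ref{L:energy flux}). In the paper, one writes $u_N(0)$ via the no-waste Duhamel formula over $[0,\infty)$ and then \emph{splits the time integral} at a parameter $T>0$. The long-time piece $[T,\infty)$ is placed in $L^\infty_x$ using the explicit propagator and the energy flux bound, yielding $\lesssim_u T^{-2/p}$; interpolation with the $L^{3p/2}_x$ bound then gives an $L^r_x$ bound $\lesssim_u T^{3/r-2/p}$. The short-time piece $[0,T]$ is handled by Bernstein and Strichartz, producing $(NT)^{1/2+3/r}$ times a paraproduct expression in the $A_r(M)$'s. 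The outer exponent $\tfrac{4r-6p}{r(p+4)}$ then comes from \emph{optimizing over $T$}, balancing $(NT)^{3/r-2/p}$ against $(NT)^{1/2+3/r}\{\cdots\}$. Your scheme instead applies Strichartz over the whole line (or takes a limit of compact intervals), but for a non-scattering almost periodic solution the dual Strichartz norm $\||\nabla|^{s_c}P_NF(u)\|_{L_t^{2q/(q+6)}L_x^{q/(q-1)}(\R\times\R^3)}$ is infinite---the nonlinearity does not decay in time. The energy flux is precisely what makes the long-time integral converge, and your proposal has no substitute for it.

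Two further points. First, your mechanism for the outer exponent---solving a self-referential bound on $A_r(N)$ via interpolation with a second Strichartz norm---is not what happens and would not produce that specific exponent; the second Strichartz norm you cite is controlled by the same data-plus-nonlinearity expression and gives no independent leverage. Second, the boundary term $(N/N_0)^{1-2/p-3/r}$ does \emph{not} come from the linear data (which in the no-waste formulation is absent); it is a nonlinear term, arising from the piece of $F(u)$ containing at least one factor $u_{>N_0}$, estimated via Bernstein and Sobolev to give $N_0^{-1+2/p+3/r}$, which combines with the $N^{1-2/p-3/r}$ prefactor from the short-time Strichartz bound. Your paraproduct bookkeeping for the $\eta^2$ factor and the two sums is in the right spirit, but it sits inside the wrong overall architecture.
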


\begin{proof}
Fix $N\leq 10p N_0$.  By time-translation symmetry, it suffices to prove
\begin{align}\label{rec goal}
N^{\frac3r-\frac2p}\| u_N(0)\|_{L_x^r}\lesssim_u \Bigl\{\bigl(\tfrac{N}{N_0}\bigr)^{1-\frac2p-\frac3r}
&+ \eta^2 \sum_{\frac{N}{10p}\leq M\leq N_0} \bigl(\tfrac{N}{M}\bigr)^{1-\frac2p-\frac3r-}A_r(M)^{p-1}\notag\\
&+ \eta^2 \sum_{M<\frac{N}{10p}} \bigl(\tfrac{M}{N}\bigr)^{\frac12+\frac3r-}A_r(M)^{p-1}\Bigr\}^{\frac{4r-6p}{r(p+4)}}.
\end{align}

Using the Duhamel formula \eqref{Duhamel} into the future we write
\begin{equation*}
u_N(0)= \int_{0}^{\infty} -\frac{\sin\bigl(t\sqrtDelta\bigr)}{\sqrtDelta}F_N(u(t))\,dt.
\end{equation*}
Now let $T>0$ to be chosen later.  Using the explicit form of the propagator (cf. Lemma~\ref{L:propagator}), H\"older's
inequality, and the energy flux inequality Lemma~\ref{L:energy flux}, we estimate the long-time contribution (without
the Littlewood--Paley projection) as follows:
\begin{align}
\Bigl\|\int_{T}^{\infty} -\frac{\sin\bigl(t\sqrtDelta\bigr)}{\sqrtDelta} F(u(t))\,dt\Bigr\|_{L_x^\infty}
&\lesssim \Bigl\| \int_{T}^{\infty} \frac1t\int_{|x-y|=t} F(u(t,y)) \, dS(y)\, dt \Bigr\|_{L_x^\infty} \notag\\
&\lesssim \sum_{R\geq T} \frac1R \Bigl\| \int_{R}^{2R} \int_{|x-y|=t} F(u(t,y)) \, dS(y)\, dt \Bigr\|_{L_x^\infty} \notag\\
&\lesssim \sum_{R\geq T} \frac1R R^{\frac3{p+2}}\Bigl\| \int_{R}^{2R} \int_{|x-y|=t} |u(t,y)|^{p+2} \, dS(y)\, dt \Bigr\|_{L_x^\infty}^{\frac{p+1}{p+2}}
\notag\\
&\lesssim_u \sum_{R\geq T} \frac1R R^{\frac3{p+2}} R^{(1-\frac4p)\frac{p+1}{p+2}} \notag\\
&\lesssim_u T^{-\frac2p}. \label{E:LongTimeEstimate}
\end{align}
On the other hand, by \eqref{Duhamel},
$$
\int_{T}^{\infty} -\frac{\sin\bigl(t\sqrtDelta\bigr)}{\sqrtDelta} F(u(t))\,dt
= \cos\bigl(T\sqrtDelta\bigr)u(T) - \frac{\sin\bigl(T\sqrtDelta\bigr)}{\sqrtDelta}u_t(T),
$$
and so, using Sobolev embedding and \eqref{Hs bounded} we get
$$
\Bigl\|\int_{T}^{\infty} -\frac{\sin\bigl(t\sqrtDelta\bigr)}{\sqrtDelta} F(u(t))\,dt\Bigr\|_{L_x^{\frac{3p}2}}
\lesssim \bigl\||\nabla|^{s_c} u\bigr\|_{L_t^\infty L_x^2} + \bigl\||\nabla|^{s_c-1} u_t\bigr\|_{L_t^\infty L_x^2} \lesssim_u 1.
$$
Therefore, interpolation (and $L^r$-boundedness of the Littlewood--Paley projection) yields the following estimate for
the long-time contribution:
\begin{align}\label{long ime}
\Bigl\|\int_{T}^{\infty} -\frac{\sin\bigl(t\sqrtDelta\bigr)}{\sqrtDelta}& F_N(u(t))\,dt\Bigr\|_{L_x^r}
\lesssim \Bigl\|\int_{T}^{\infty} -\frac{\sin\bigl(t\sqrtDelta\bigr)}{\sqrtDelta} F(u(t))\,dt\Bigr\|_{L_x^r}
    \lesssim_u T^{\frac3r-\frac2p},
\end{align}
valid for any $\frac{3p}2\leq r \leq \infty$.  We will make use of this inequality without the frequency projection in
the next section; see \eqref{quant:long time}.

We turn now to the short-time contribution.  By the Bernstein and Strichartz inequalities,
\begin{align}\label{short time}
\Bigl\|\int_0^{T} -\frac{\sin\bigl(t\sqrtDelta\bigr)}{\sqrtDelta} F_N(u(t))\,dt\Bigr\|_{L_x^r}
&\lesssim N^{\frac32-\frac3r}\Bigl\|\int_0^{T} -\frac{\sin\bigl(t\sqrtDelta\bigr)}{\sqrtDelta} F_N(u(t))\,dt\Bigr\|_{L_x^2}\notag\\
&\lesssim N^{\frac32-\frac3r} \|F_N(u)\|_{L_t^{\frac{2r}{r+6}}L_x^{\frac{r}{r-1}}([0,T]\times\R^3)}\notag\\
&\lesssim N^{\frac32-\frac3r}T^{\frac{r+6}{2r}} \|F_N(u)\|_{L_t^\infty L_x^{\frac{r}{r-1}}}.
\end{align}

Collecting \eqref{long ime} and \eqref{short time} we obtain
\begin{align}\label{rec reduct}
N^{\frac3r-\frac2p}\| u_N(0)\|_{L_x^r}
\lesssim_u (NT)^{\frac3r-\frac2p} + (NT)^{\frac12+\frac3r} N^{1-\frac2p-\frac3r} \|F_N(u)\|_{L_t^\infty L_x^{\frac{r}{r-1}}}.
\end{align}

To estimate the right-hand side of the inequality above we decompose
$$u=u_{>N_0} + u_{\leq N_0} = u_{>N_0} + u_{\frac N{10p}\leq \cdot\leq N_0} + u_{<\frac N{10p}}$$
and thus, taking advantage of our assumption that the power $p$ is even, we write
\begin{align}\label{decomp}
F_N(u) =P_N\biggl(u_{>N_0}\sum_{k=0}^p \tbinom{p+1}{k+1} u_{>N_0}^k u_{\leq N_0}^{p-k} +
    \sum_{k=0}^p \tbinom{p+1}k u_{<\frac N{10p}}^k u_{\frac N{10p}\leq \cdot\leq N_0}^{p+1-k}\biggr).
\end{align}

To estimate the contribution of the first term in this decomposition to \eqref{rec reduct} we use H\"older, Sobolev embedding, Bernstein,
and \eqref{Hs bounded}:
\begin{align*}
\sum_{k=0}^p \tbinom{p+1}{k+1} \Bigl\|P_N\bigl( u_{>N_0}  u_{>N_0}^k u_{\leq N_0}^{p-k}\bigr)\Bigr\|_{L_t^\infty L_x^{\frac{r}{r-1}}}
&\lesssim \|u\|_{L_t^\infty L_x^{\frac{3p}2}}^p \|u_{>N_0}\|_{L_t^\infty L_x^{\frac{3r}{r-3}}}\\
&\lesssim \|u\|_{L_t^\infty \dot H^{s_c}_x}^p \bigl\| |\nabla|^{\frac12+\frac3r}u_{>N_0}\bigr\|_{L_t^\infty L_x^2}\\
&\lesssim_u N_0^{-1+\frac2p+\frac3r} \|u_{>N_0}\|_{L_t^\infty \dot H^{s_c}_x} \\
&\lesssim_u N_0^{-1+\frac2p+\frac3r}.
\end{align*}

To estimate the contribution of the second term on the right-hand side of \eqref{decomp} to \eqref{rec reduct} we first note that
\begin{align*}
\Bigl\|P_N \bigl(\sum_{k=0}^p \tbinom{p+1}k &  u_{<\frac N{10p}}^k u_{\frac N{10p}\leq \cdot\leq N_0}^{p+1-k} \bigr)\Bigr\|_{L_t^\infty L_x^{\frac{r}{r-1}}} \\
&\lesssim \bigl\| u_{\frac N{10p}\leq \cdot\leq N_0}^{p+1} \bigr\|_{L_t^\infty L_x^{\frac{r}{r-1}}}
+ \bigl\|u_{<\frac N{10p}}^p u_{\frac N{10p}\leq \cdot\leq N_0}\bigr\|_{L_t^\infty L_x^{\frac{r}{r-1}}}.
\end{align*}
By H\"older, Bernstein, and \eqref{Hs small} we estimate
\begin{align*}
\bigl\| &u_{\frac N{10p}\leq \cdot\leq N_0}^{p+1} \bigr\|_{L_t^\infty L_x^{\frac{r}{r-1}}}\\
&\lesssim \!\!\sum_{\frac N{10p}\leq N_1\leq \cdots\leq N_{p+1}\leq N_0} \!\!\|u_{N_1}\|_{L_t^\infty L_x^r}\cdots\|u_{N_{p-1}}\|_{L_t^\infty L_x^r}
    \|u_{N_p}\|_{L_t^\infty L_x^{\frac{2r}{r-p}}}\|u_{N_{p+1}}\|_{L_t^\infty L_x^{\frac{2r}{r-p}}}\\
&\lesssim \!\!\sum_{\frac N{10p}\leq N_1\leq \cdots\leq N_{p+1}\leq N_0} \!\! \|u_{N_1}\|_{L_t^\infty L_x^r}\cdots\|u_{N_{p-1}}\|_{L_t^\infty L_x^r}
    N_p^{\frac2p-\frac{3(r-p)}{2r}}N_{p+1}^{\frac2p-\frac{3(r-p)}{2r}} \|u_{\leq N_0}\|_{L_t^\infty \dot H^{s_c}_x}^2\\
&\lesssim \eta^2 \sum_{\frac N{10p}\leq N_1\leq \cdots\leq N_{p-1}\leq N_0} A_r(N_1)\cdots A_r(N_{p-1})N_{p-1}^{-1+\frac2p+\frac3r}\\
&\lesssim \eta^2 N^{-1+\frac2p+\frac3r} \sum_{\frac N{10p}\leq N_1\leq \cdots\leq N_{p-1}\leq N_0} \Bigl(\frac N{N_{p-1}}\Bigr)^{1-\frac2p-
\frac3r}\bigl[A_r(N_1)^{p-1} + \cdots +A_r(N_{p-1})^{p-1}\bigr]\\
&\lesssim \eta^2 N^{-1+\frac2p+\frac3r} \sum_{\frac N{10p}\leq M\leq N_0} \Bigl(\frac N{M}\Bigr)^{1-\frac2p-\frac3r-}A_r(M)^{p-1}.
\end{align*}
Similarly, we estimate
\begin{align*}
\bigl\|&u_{<\frac N{10p}}^p u_{\frac N{10p}\leq \cdot\leq N_0}\bigr\|_{L_t^\infty L_x^{\frac{r}{r-1}}}\\
&\lesssim \|u_{\frac N{10p}\leq \cdot\leq N_0}\|_{L_t^\infty L_x^2} \sum_{N_1\leq \cdots \leq N_p<\frac{N}{10p}}
    \|u_{N_1}\|_{L_{t,x}^\infty}\cdots \|u_{N_{p-1}}\|_{L_{t,x}^\infty} \|u_{N_p}\|_{L_t^\infty L_x^{\frac{2r}{r-2}}}\\
&\lesssim_u \eta^2 N^{-\frac32+\frac2p} \sum_{N_1\leq \cdots \leq N_p<\frac{N}{10p}}N_1^{\frac3r}\|u_{N_1}\|_{L_t^\infty L_x^r}\cdots
    N_{p-1}^{\frac3r}\|u_{N_{p-1}}\|_{L_t^\infty L_x^r} N_p^{-\frac32+\frac2p+\frac3r}\\
&\lesssim_u \eta^2 N^{-\frac32+\frac2p} \sum_{N_1\leq \cdots \leq N_p<\frac{N}{10p}}N_1^{\frac2p}
    A_r(N_1)\cdots N_{p-1}^{\frac2p} A_r(N_{p-1})N_p^{-\frac32+\frac2p+\frac3r}\\
&\lesssim_u \eta^2 N^{-1+\frac2p+\frac3r} \sum_{N_1\leq \cdots \leq N_{p-1}<\frac{N}{10p}}
    \Bigl(\frac{N_1}{N}\Bigr)^{\frac2p-\frac1{p-1}(\frac32-\frac2p-\frac3r)} A_r(N_1)\times\cdots\\
&\qquad \qquad \qquad \qquad \qquad \qquad \qquad \qquad\cdots\times\Bigl(\frac{N_{p-1}}{N}\Bigr)^{\frac2p-\frac1{p-1}(\frac32-\frac2p-\frac3r)}
A_r(N_{p-1})\\
&\lesssim_u \eta^2 N^{-1+\frac2p+\frac3r}\sum_{M<\frac{N}{10p}}\Bigl(\frac{M}{N}\Bigr)^{\frac12+\frac3r-} A_r(M)^{p-1}.
\end{align*}

Putting everything together, we obtain
\begin{align*}
N^{\frac3r-\frac2p}\| u_N(0)\|_{L_x^r}
\lesssim_u (NT)^{\frac3r-\frac2p} + (NT)^{\frac12+\frac3r}\Bigl\{&\bigl(\tfrac{N}{N_0}\bigr)^{1-\frac2p-\frac3r}\\
&+ \eta^2 \!\!\!\!\!\sum_{\frac{N}{10p}\leq M\leq N_0} \bigl(\tfrac{N}{M}\bigr)^{1-\frac2p-\frac3r-}A_r(M)^{p-1}\notag\\
&+ \eta^2 \!\!\!\sum_{M<\frac{N}{10p}} \bigl(\tfrac{M}{N}\bigr)^{\frac12+\frac3r-}A_r(M)^{p-1}\Bigr\}.
\end{align*}
Setting
\begin{align*}
T:=N^{-1}\Bigl\{\bigl(\tfrac{N}{N_0}\bigr)^{1-\frac2p-\frac3r}
&+ \eta^2 \sum_{\frac{N}{10p}\leq M\leq N_0} \bigl(\tfrac{N}{M}\bigr)^{1-\frac2p-\frac3r-}A_r(M)^{p-1}\notag\\
&+ \eta^2 \sum_{M<\frac{N}{10p}} \bigl(\tfrac{M}{N}\bigr)^{\frac12+\frac3r-}A_r(M)^{p-1}\Bigr\}^{-\frac{2p}{p+4}},
\end{align*}
we deduce \eqref{rec goal}.  This completes the proof of the lemma.
\end{proof}

To resolve the recurrence in Lemma~\ref{L:recurrence} and so prove Proposition~\ref{P:L^p breach}, we need the following simple lemma:

\begin{lemma}[Acausal Gronwall inequality]\label{L:Gronwall}
Given $\eta, C,\gamma,\gamma'>0$, let $\{x_k\}_{k\geq 0}$ be a bounded non-negative sequence obeying
\begin{align}\label{Gron rec}
x_k \leq C 2^{-\gamma k} + \eta \sum_{l<k} 2^{-\gamma|k-l|} x_l + \eta \sum_{l \geq k} 2^{-\gamma'|k-l|} x_l  \qtq{for all} k\geq 0.
\end{align}
If $\eta\leq\tfrac14\min\{1-2^{-\gamma},1-2^{-\gamma'},1-2^{\rho-\gamma}\}$ for some $0<\rho<\gamma$, then $x_k \leq (4C+\|x\|_{\ell^\infty}) \, 2^{-\rho k}$.
\end{lemma}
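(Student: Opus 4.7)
The plan is to recognize the hypothesis as the sub-fixed-point relation $x \leq c + Lx$, where $c_k := C \, 2^{-\gamma k}$ and $L$ is the non-negative, monotone linear operator on sequences defined by
\begin{equation*}
(Ly)_k := \eta \sum_{l<k} 2^{-\gamma(k-l)} y_l + \eta \sum_{l \geq k} 2^{-\gamma'(l-k)} y_l,
\end{equation*}
and then to invert $I - L$ via a Neumann series carried out simultaneously in two different Banach spaces of sequences.

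The first step is to establish $\|L\| \leq 1/2$ in each of two norms: the unweighted $\ell^\infty$ and the weighted norm $\|y\|_\rho := \sup_{k \geq 0} 2^{\rho k} y_k$. The $\ell^\infty$ bound is straightforward from the geometric sums $\sum_{j \geq 1} 2^{-\gamma j} = (2^\gamma - 1)^{-1}$ and $\sum_{j \geq 0} 2^{-\gamma' j} = (1 - 2^{-\gamma'})^{-1}$ combined with the first two smallness conditions on $\eta$. For the weighted bound one substitutes $y_l \leq \|y\|_\rho \, 2^{-\rho l}$ and factors $2^{-\rho k}$ out of each sum; the resulting geometric series $\sum_{j \geq 1} 2^{-(\gamma-\rho) j}$ and $\sum_{j \geq 0} 2^{-(\gamma'+\rho) j}$ are controlled by the third smallness condition $\eta \leq \tfrac{1}{4}(1 - 2^{\rho - \gamma})$ and by $\eta \leq \tfrac{1}{4}(1 - 2^{-\gamma'})$ respectively, the latter sum being even smaller than in the unweighted case since $\gamma' + \rho > \gamma'$.

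With both contraction estimates in hand, I iterate. By the monotonicity of $L$, applying it repeatedly to the sub-solution inequality $x \leq c + Lx$ yields, by a straightforward induction on $N$, the pointwise bound
\begin{equation*}
x \leq \sum_{n=0}^{N-1} L^n c + L^N x.
\end{equation*}
The $\ell^\infty$ bound gives $\|L^N x\|_{\ell^\infty} \leq 2^{-N} \|x\|_{\ell^\infty} \to 0$ as $N \to \infty$, so passing to the limit pointwise yields $x \leq \sum_{n \geq 0} L^n c$. The weighted-space contraction then controls the right-hand side: $\bigl\|\sum_{n \geq 0} L^n c\bigr\|_\rho \leq \sum_{n \geq 0} 2^{-n} \|c\|_\rho = 2C$, and hence $\bigl(\sum_{n \geq 0} L^n c\bigr)_k \leq 2C \cdot 2^{-\rho k}$. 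Combining these gives $x_k \leq 2C \cdot 2^{-\rho k}$, and in particular $x_k \leq (4C + \|x\|_{\ell^\infty}) \, 2^{-\rho k}$.

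The only conceptual subtlety is the interplay between the two norms. The weighted norm is where the decay lives and where the Neumann series manifestly delivers the correct rate, but we have no \emph{a priori} reason to believe $x$ itself lies in that space --- indeed, establishing this is essentially the content of the lemma. The role of the trivial $\ell^\infty$ bound is to drive $L^N x$ to zero at the end of the iteration, allowing the pointwise sub-solution $x$ to be compared with the Neumann-series upper bound computed in the weighted space.
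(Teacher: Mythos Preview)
Your proof is correct and in fact yields the sharper bound $x_k \leq 2C\,2^{-\rho k}$, which is stronger than what the lemma asserts.  Your argument differs genuinely from the paper's.  The paper passes to the decreasing envelope $X_k := \sup_{m\geq k} x_m$; because $X_l \leq X_k$ for $l\geq k$, the entire acausal tail $\eta\sum_{l\geq k}2^{-\gamma'|k-l|}x_l$ (together with the piece of the causal sum with $l\geq k$) is absorbed into $\tfrac12 X_k$, leaving a purely causal recursion $X_k \leq 2C\,2^{-\gamma k} + 2\eta\sum_{l<k}2^{-\gamma(k-l)}X_l$ that is closed by a one-line induction on $k$.  Your approach instead interprets the hypothesis as $x\leq c+Lx$ for a positive operator $L$, proves $L$ is a $\tfrac12$-contraction simultaneously in $\ell^\infty$ and in the weighted space $\ell^\infty_\rho$, and uses the $\ell^\infty$ contraction to kill the remainder $L^Nx$ while the weighted contraction sums the Neumann series $\sum L^nc$.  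The envelope trick is shorter and more elementary; your operator-theoretic argument is more systematic, makes the role of each smallness condition on $\eta$ transparent, and gives a constant independent of $\|x\|_{\ell^\infty}$.
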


\begin{proof}
Let $X_k:=\sup\{ x_m : m\geq k \}$, so that \eqref{Gron rec} implies
\begin{align*}
X_k &\leq C \, 2^{-\gamma k} + \eta \sum_{l<k} 2^{-\gamma|k-l|} X_l + \eta \sum_{p=0}^\infty \bigl[ 2^{-\gamma p} + 2^{-\gamma' p} \bigr] X_k \\
    &\leq C\, 2^{-\gamma k} + \eta \sum_{l<k} 2^{-\gamma|k-l|} X_l + \tfrac12 X_k.
\end{align*}
The result now follows by a simple inductive argument.
\end{proof}

Using this lemma we can now complete the

\begin{proof}[Proof of Proposition~\ref{P:L^p breach}]
For any positive $r$, the power appearing outside the braces in \eqref{E:recurrence} is less than one.  Thus, by concavity, Lemma~\ref{L:recurrence} implies
\begin{align*}
A_r(N)\lesssim_u \bigl(\tfrac{N}{N_0}\bigr)^{\gamma}
&+ \eta^2 \!\!\! \sum_{\frac{N}{10p}\leq M\leq N_0}\!\! \bigl(\tfrac{N}{M}\bigr)^\gamma A_r(M)^{(p-1)\frac{4r-6p}{r(p+4)}}
+ \eta^2 \!\!\! \sum_{M<\frac{N}{10p}}\!\! \bigl(\tfrac{M}{N}\bigr)^{\gamma'} \! A_r(M)^{(p-1)\frac{4r-6p}{r(p+4)}},
\end{align*}
for all
$$
N\leq 10p N_0, \quad \tfrac {3p}2<r<\infty, \quad \gamma < (1-\tfrac2p-\tfrac3r)\tfrac{4r-6p}{r(p+4)},
\qtq{and} \gamma' < (\tfrac12+\tfrac3r)\tfrac{4r-6p}{r(p+4)}.
$$
When $\frac{6p(p-1)}{3p-8}\leq r<\infty$, the power atop $A_r(M)$ on the right-hand side of the inequality above is $\geq 1$.  Discarding
surplus powers by invoking \eqref{A_r bdd}, we can apply Lemma~\ref{L:Gronwall} and deduce
\begin{align}\label{breach 1}
\|u_N\|_{L_t^\infty L_x^r}\lesssim_u N^{\frac2p-\frac3r+(1-\frac2p-\frac3r)\frac{4r-6p}{r(p+4)}-}
\end{align}
for all $N\leq 10p N_0$.  (In applying Lemma~\ref{L:Gronwall}, we set $N=10p \cdot 2^{-k} N_0$, $x_k=A_r(10p \cdot 2^{-k} N_0)$, and take $\eta$ sufficiently small.)

To continue, we use interpolation followed by \eqref{breach 1}, Bernstein, and \eqref{Hs bounded}:
\begin{align*}
\|u_N\|_{L_t^\infty L_x^q}
&\leq \|u_N\|_{L_t^\infty L_x^r}^{\frac{r(q-2)}{q(r-2)}} \|u_N\|_{L_t^\infty L_x^2}^{\frac{2(r-q)}{q(r-2)}}\\
&\lesssim_u N^{\frac{r(q-2)}{q(r-2)}[\frac2p-\frac3r+(1-\frac2p-\frac3r)\frac{4r-6p}{r(p+4)}]-} N^{-\frac{2(r-q)}{q(r-2)}(\frac32-\frac2p)}
\end{align*}
for all $N\leq 10p N_0$.  Thus, letting $r\to\infty$, we get
$$
\|u_N\|_{L_t^\infty L_x^q}\lesssim_u N^{\frac{6(q-2)}{q(p+4)}-\frac{3p-4}{pq}-}
$$
for all $N\leq 10p N_0$.  Therefore, using Bernstein together with \eqref{Hs bounded}, for $\frac{3p^2+20p-16}{6p}<q< \frac{3p}2$ we obtain
\begin{align*}
\|u\|_{L_t^\infty L_x^q}
&\leq \|u_{\leq N_0}\|_{L_t^\infty L_x^q} + \|u_{> N_0}\|_{L_t^\infty L_x^q}\\
&\lesssim_u \sum_{N\leq N_0} N^{\frac{6(q-2)}{q(p+4)}-\frac{3p-4}{pq}-} + \sum_{N>N_0} N^{\frac2p-\frac3q} \lesssim_{u} 1,
\end{align*}
which completes the proof of Proposition~\ref{P:L^p breach}.
\end{proof}

%
%
%
%

\section{Quantitative decay}\label{S:quant}

In this section we consider the soliton-like and frequency-cascade solutions (in the sense of Theorem~\ref{T:enemies})
and obtain a quantitative bound for how such solutions decay away from $x(t)$ in a critical space, specifically, $L^{3p/2}_x$.
Note that compactness merely gives a non-quantitative decay.

\begin{proposition}[Spatial decay]\label{P:Spatial decay}
Let $u$ be a global solution to \eqref{nlw} that is almost periodic modulo symmetries.  Assume also that
\begin{align}\label{quant:input}
\bigl\|(u,u_t)\bigr\|_{L_t^\infty(\R; \dot H^{s_c}_x\times\dot H^{s_c-1}_x)}<\infty \qtq{and} \inf_{t\in \R} N(t)\geq 1.
\end{align}
Then
\begin{align}\label{quant:decay}
\sup_{t\in\R} \int_{|x-x(t)|\geq R} |u(t,x)|^{\frac{3p}2} \,dx \lesssim_u R^{-\gamma}
\end{align}
for any $\gamma < \frac{6p^2-20p+16}{3p^2}$ and in particular, for some $\gamma>1$ when $p\geq 6$.
\end{proposition}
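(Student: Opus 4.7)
My strategy is to first establish the pointwise decay
$$|u(t_0, x)| \lesssim_u R^{-2/p} \quad \text{for } |x - x(t_0)| \geq R,$$
and then interpolate this with the $L^q$-bound of Proposition~\ref{P:L^p breach} to obtain the stated integrated $L^{3p/2}$-decay. By translation symmetry we may set $t_0 = 0$ and $x(0) = 0$ and fix an arbitrary $x_0$ with $|x_0| \geq R$.

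\textbf{Pointwise decay via Duhamel.} The no-waste forward Duhamel formula (Lemma~\ref{L:duhamel}) combined with the explicit 3D propagator (Lemma~\ref{L:propagator}) yields
$$u(0, x_0) = -\int_0^\infty \tfrac{1}{4\pi t}\int_{|y-x_0|=t} F(u(t,y))\,dS(y)\,dt,$$
which I split at time $T = cR$ for small $c = c(\delta)>0$, where $\delta$ is the sub-luminality constant of Proposition~\ref{P:subluminal}. The long-time piece $t\in(T,\infty)$ is controlled by the estimate \eqref{E:LongTimeEstimate} already established in the proof of Lemma~\ref{L:recurrence}, which furnishes an $L^\infty_x$-bound of size $T^{-2/p} \lesssim R^{-2/p}$. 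For the short-time piece $t\in[0,T]$, combining sub-luminality with \eqref{x is Lip} (both of which rely on $N(t)\geq 1$) gives $|x(t)| \leq (1-\delta)t + O(1)$, so every point $y$ on the sphere $|y-x_0|=t$ satisfies $|y - x(t)| \geq R - (2-\delta)t - O(1) \gtrsim R$. Thus the short-time integrand lives in the tail of $u(t)$. Partitioning $[0,T]$ dyadically as $[R_k/2, R_k]$ and applying H\"older on the sphere together with the energy flux identity (Lemma~\ref{L:energy flux}) bounds each dyadic piece by
$$\int_{R_k/2}^{R_k} \tfrac{1}{t}\int_{|y-x_0|=t}|u|^{p+1}\,dS\,dt \lesssim_u R_k^{-1}\,(R_k^{1-4/p})^{(p+1)/(p+2)}\,(R_k^3)^{1/(p+2)} = R_k^{-2/p}.$$

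\textbf{Interpolation.} Given the pointwise bound, the conclusion follows by interpolation: for $q$ in the range of Proposition~\ref{P:L^p breach},
$$\int_{|x| \geq R}|u(0,x)|^{3p/2}\,dx \leq \|u(0)\|_{L^\infty(\{|x|\geq R\})}^{3p/2 - q}\|u(0)\|_{L^q_x}^{q} \lesssim_u R^{-(3p/2 - q)\cdot 2/p}.$$
Letting $q \searrow (3p^2+20p-16)/(6p)$ produces the claimed exponent $(6p^2 - 20p + 16)/(3p^2)$ and so the full admissible range of $\gamma$.

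\textbf{Main obstacle.} The genuinely delicate point is taming the small-$t$ portion of the short-time dyadic sum: summing $\sum_k R_k^{-2/p}$ across all dyadic scales in $(0,T]$ naively diverges, dominated by its smallest-scale terms. Resolving this requires exploiting almost-periodicity via a tail-refined energy flux bound --- replacing $\|u\|_{L^{3p/2}_x}$ and $\|\ntx u\|_{L^{3p/(p+2)}_x}$ in the constant of Lemma~\ref{L:energy flux} by their (small) restrictions to $\{|y-x(t)|\geq A\}$ --- or, equivalently, via a cutoff decomposition $F(u)=F_{\mathrm{in}}+F_{\mathrm{out}}$ around $x(t)$: the inner piece $F_{\mathrm{in}}$ cannot reach $x_0$ for $t$ below a threshold of order $R$ by finite speed of propagation, while $F_{\mathrm{out}}$ is estimated in $L^\infty_t L^1_x$ through Proposition~\ref{P:L^p breach} and standard dispersive/Strichartz inequalities. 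With this refinement only the dyadic scales $R_k\gtrsim R$ effectively contribute, restoring the $R^{-2/p}$ pointwise bound.
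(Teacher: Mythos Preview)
Your overall strategy differs from the paper's: you attempt a direct pointwise bound $|u(0,x)|\lesssim_u R^{-2/p}$ for $|x|\geq R$, then interpolate once with Proposition~\ref{P:L^p breach}. The paper instead runs a \emph{bootstrap} on the quantity $B(R):=\sup_t\int_{|x-x(t)|\geq R}|u|^{3p/2}\,dx$ itself. The interpolation you perform at the end is exactly the paper's real-interpolation step, and your treatment of the long-time piece matches the paper's. The essential divergence is in the short-time piece.

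You correctly flag the obstacle --- the dyadic sum $\sum_{R_k\leq T} R_k^{-2/p}$ diverges at small scales --- but neither of your proposed fixes closes it. The tail-refined energy flux only replaces the implicit constant in Lemma~\ref{L:energy flux} by some $\eta>0$ coming from almost periodicity; each dyadic piece becomes $\lesssim \eta^{2(p+1)/(p+2)}R_k^{-2/p}$, and the sum still diverges. The $F_{\mathrm{in}}/F_{\mathrm{out}}$ decomposition does not help either: as you yourself note, the geometry already forces the sphere $\{|y-x_0|=t\}$ into the region $\{|y-x(t)|\gtrsim R\}$ for $t\in[0,cR]$, so the entire short-time contribution is $F_{\mathrm{out}}$; and you have no quantitative decay for $F_{\mathrm{out}}$ in $L^1_x$ (or any space compatible with the dispersive estimate) --- that is precisely what the proposition is supposed to deliver.

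The paper resolves this by abandoning the pointwise $L^\infty_x$ estimate on the short-time Duhamel piece $g$. Instead it bounds $g$ directly in the scale-invariant space $L^{3p/2}_x(|x|\geq R)$ via Sobolev embedding and Strichartz (cf.\ \eqref{quant:short time A}--\eqref{quant:short time}), using finite speed of propagation to confine the relevant nonlinearity to the exterior region $\Omega_R\subseteq\{|x-x(t)|\geq R/2\}$ and the small-data Strichartz bound \eqref{quant:small Strichartz} to extract a factor of $\eta^p$. This produces the recursion
\[
B(R)\lesssim_{u,\delta} R^{-(3p-2q)/p} + \eta^p B(R/2),
\]
which iterates to the claimed power decay once $\eta$ is chosen small. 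The smallness of $\eta$ here plays a structural role --- it buys contraction in a self-referential inequality --- rather than the merely cosmetic role it would play in your dyadic sum.
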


\begin{proof}
We prove this by bootstrap; the requisite smallness comes from compactness.  We elaborate on this, before launching into the main part of the
argument.

Let $\phi:\R^3\to[0,1]$ be a smooth function with $\phi(x)=1$ when $|x|\geq 1$ and $\phi(x)=0$ when $|x|<\frac12$.  As
$u$ is almost periodic modulo symmetries and \eqref{quant:input} holds, for any $\eta>0$ we may choose $R_0$ so that
\begin{align}\label{E:large freq small}
\sup_{t\in\R} \Bigl\{  \bigl\| \phi\bigl(\tfrac{10}{R_0} [x-x(t)]\bigr) u \bigr\|_{\dot H^{s_c}}
    + \bigl\| \phi\bigl(\tfrac{10}{R_0} [x-x(t)]\bigr) u_t \bigr\|_{\dot H^{s_c-1}} \Bigr\} \leq \eta.
\end{align}
Requiring $\eta$ to be small enough that the small-data global well-posedness theory applies and making use of simple
domain of dependence arguments, we deduce that
\begin{equation}\label{quant:small Strichartz}
\sup_{T\in\R} \Bigl\{\bigl\| u \bigr\|_{L_t^{\frac{4p(p-1)}{3p+2}} L_x^{\frac{12p(p-1)}{5(p-2)}}(\{|x-x(T)|\geq \frac{R_0}{10}+|t-T|\})} +
    \bigl\| \nabla u \bigr\|_{L^\infty_t L^{\frac{3p}{p+2}}(\{|x-x(T)|\geq \frac{R_0}{10}+|t-T|\})}  \Bigl\} \lesssim \eta.
\end{equation}

We now turn to the main part of the proof of Proposition~\ref{P:Spatial decay}. By the time-translation symmetry of the
problem, it suffices to consider a single time, say $t=0$.  By space-translation symmetry, we may set $x(0)=0$.
Using Lemma~\ref{L:duhamel} we may represent $u(0)$ as an integral over
$[0,\infty)$, which we choose to break here into two pieces: $[0,\delta R]\cup[\delta R,\infty)$.  Thus $u(0)=f+g$ with
\begin{equation}\label{quant:f+g}
f := \int_{\delta R}^\infty \frac{\sin\bigl(t\sqrtDelta\bigr)}{\sqrtDelta} F(u(t))\,dt
\qtq{and}
g := \int_0^{\delta R} \frac{\sin\bigl(t\sqrtDelta\bigr)}{\sqrtDelta} F(u(t))\,dt .
\end{equation}
Here $\delta>0$ is a small number that will be chosen in due course.

The estimate we need for the long-time piece $f$, was obtained already in \eqref{E:LongTimeEstimate}:
\begin{equation}\label{quant:long time}
\| f \|_{L^\infty_x(\R^3)} \lesssim_u (\delta R)^{-\frac2p}.
\end{equation}
By contrast, we estimate $g$ in a more natural (scale-invariant) space.  Note that by finite speed of propagation, both for the propagator
$|\nabla|^{-1}\sin(t|\nabla|)$  (cf. Lemma~\ref{L:propagator}) as well as for the center $x(t)$ of the wave packet (cf. Proposition~\ref{P:Standard}),
we see that for $|x|\geq R$ the value of $g(x)$ depends only on the values of $u$ in the set
$$
\Omega_R := \{(t,x) : t\in[0,\delta R]\text{ and } |x-x(t)|\geq (1-2\delta)R - 2C_u\},
$$
where $C_u$ is as in \eqref{x is Lip}.  With this in hand, we now estimate $g$ using Sobolev embedding (which is valid on the complement of a ball)
together with the Strichartz and H\"older inequalities:
\begin{align}\label{quant:short time A}
\| g \|_{L^{3p/2}_x(|x|\geq R)}
   &\lesssim \biggl\| \int_0^{\delta R} \frac{\sin\bigl(t\sqrtDelta\bigr)}{\sqrtDelta} \nabla F(u(t))\,dt \biggr\|_{\dot H^{s_c-1}(|x|\geq R)} \notag\\
&\lesssim \bigl\| \nabla F(u) \bigr\|_{L_{t,x}^{\frac{4p}{3p+2}}(\Omega_R)} \notag\\
&\lesssim \bigl\| u \bigr\|_{L^\infty_t L_x^{3p/2}(\Omega_R)} \bigl\| u \bigr\|^{p-1}_{L_t^{\frac{4p(p-1)}{3p+2}} L_x^{\frac{12p(p-1)}{5(p-2)}}(\Omega_R)}
     \bigl\| \nabla u \bigr\|_{L^\infty_t L^{\frac{3p}{p+2}}(\Omega_R)}
\end{align}
Now requiring $R>R_0(u)\geq 8C_u$ and $\delta\leq 1/8$, we see that $\Omega_R$ is included in the region
where $|x-x(t)|\geq R/2$, which we apply to the first copy of $u$. If $R\geq R_0$, then $\Omega_R$ also is included in the region covered by
\eqref{quant:small Strichartz}, which we apply to the next two factors.  In this way we obtain
\begin{align}\label{quant:short time}
    \| g \|_{L^{3p/2}_x(|x|\geq R)} &\lesssim \eta^{p} \bigl\| u \bigr\|_{L^\infty_t L_x^{3p/2}(\R\times\{|x-x(t)|\geq R/2\})}
\end{align}
for a fixed small $\delta$ and $R\geq R_0(\eta,u)$.

Next we put the two pieces, $f$ and $g$, together to bound LHS\eqref{quant:decay}.  This is a simple application of
standard tricks from real interpolation: Fix $A>0$ so that $\|f\|_{L^\infty_x} \leq A/2$; note that by \eqref{quant:long time}, $A\lesssim_u (\delta R)^{-2/p}$.
Then
$$
|u(0,x)|^{3p/2} \leq A^{\frac{3p-2q}2}|u(0,x)|^{q} + |2 g(x)|^{3p/2}
$$
and so, using \eqref{quant:short time},
$$
\int_{|x|\geq R} |u(0,x)|^{3p/2} \,dx \lesssim_{u,\delta}  R^{ -\frac{3p-2q}{p} } \|u(0,x)\|^{q}_{L^q_x(\R^3)}
    + \eta^{p} \sup_t \int_{|x-x(t)|\geq R/2} |u(t,x)|^{3p/2} \,dx
$$
for $R\geq R_0(\eta,u)$.

We now have our basic inductive step.  Defining
$$
B(R) := \sup_{t\in\R} \int_{|x-x(t)|\geq R} |u(t,x)|^{3p/2} \,dx,
$$
restoring space- and time-translation invariance, and invoking Proposition~\ref{P:L^p breach}, we have
$$
B(R) \lesssim_{u,\delta}  R^{-\frac{3p-2q}{p}} + \eta^{p} B(\tfrac12 R) \qtq{for any} q > \tfrac{3p^2+20p-16}{6p}
$$
and $R\geq R_0(\eta,u)$.  On the other hand, by \eqref{quant:input} and Sobolev embedding, $B(R)\lesssim_u 1$ for $R\leq R_0(\eta,u)$.
The desired estimate now follows by choosing $\eta$ sufficiently small and performing a simple induction.
\end{proof}

%
%
%
%

\section{Global enemies have finite energy}\label{S:finite E}

In this section, we prove that the soliton-like and frequency-cascade solutions described in Theorem~\ref{T:enemies} have finite energy,
that is, $\ntx u$ is square integrable.  The first and main step is the following:

\begin{theorem}\label{T:reduce s}
Let $u$ be a global solution to \eqref{nlw} that is almost periodic modulo symmetries.  Assume also that
$$
\inf_{t\in\R} N(t) \geq 1
$$
and
\begin{align}\label{energy:s bound}
\bigl\||\nabla|^{s-1} \ntx u\bigr\|_{L_t^\infty L_x^2}<\infty
\end{align}
for some $1< s \leq s_c$.  Then for all $0<\eps\leq\eps_0(p)$
\begin{equation}\label{energy:goal}
\bigl\||\nabla|^{s-1-\eps} \ntx u\bigr\|_{L_t^\infty L_x^2}<\infty,
\end{equation}
provided $s-1-\eps> 0$.
\end{theorem}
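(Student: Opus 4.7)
The plan is to split $\ntx u$ into high- and low-frequency parts and handle only the low-frequency piece by non-trivial means. By Bernstein combined with~\eqref{energy:s bound}, the high-frequency piece $\||\nabla|^{s-1-\epsilon}P_{>N_0}\ntx u\|_{L^\infty_t L^2_x}$ is bounded for any $N_0\sim 1$, so \eqref{energy:goal} reduces to the bound
\[
\sup_{t_0\in\R}\bigl\||\nabla|^{s-1-\epsilon} P_{\leq N_0}\ntx u(t_0)\bigr\|_{L^2_x} <\infty.
\]
By time- and space-translation invariance, I fix $t_0=0$ and $x(0)=0$.

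To control the square of this norm I would use a double-Duhamel identity of the shape~\eqref{fake monster}. Via Lemma~\ref{L:duhamel}, represent $P_{\leq N_0}\ntx u(0)$ once as an integral forward in time over $t\in(0,\infty)$ and once as minus an integral backward in time over $\tau\in(-\infty,0)$. Pairing these two representations against $|\nabla|^{2(s-1-\epsilon)}P_{\leq N_0}$ produces a bilinear expression in $F(u(t)),F(u(\tau))$, into which I insert a dyadic partition of unity $\{\phi_R(\cdot-x_j)\}$ on $\R^3$, using the fact that $|\nabla|^{s-1-\epsilon}P_{\leq N_0}$ has a polynomially decaying convolution kernel (Lemma~\ref{L:mismatch}). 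This reduces the problem to summing, over $(R,j)$, terms of the form
\begin{align*}
\biggl|\iint\bigl\langle L_t F(u(t)),\,\phi_R(\cdot-x_j)\,L_\tau F(u(\tau))\bigr\rangle\,dt\,d\tau\biggr|,
\end{align*}
where $L_t,L_\tau$ are the (frequency-truncated, fractionally differentiated) propagators appearing in~\eqref{fake monster}.

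For each $(R,j)$ I would split $F(u(t))=F_{\near}(t)+F_{\far}(t)$ according to whether $|x-x(t)|\leq c(|t|+1)$, and likewise at time $\tau$, and cut off time at $|t|,|\tau|\lesssim R$. The \emph{near/near} piece at times $|t-\tau|\gtrsim R$ is handled by Proposition~\ref{P:weakD}: sub-luminality (Proposition~\ref{P:subluminal}) gives $|x(t)-x(\tau)|\leq(1-\delta)|t-\tau|$, which combined with the near-support condition ensures the keyhole-separation hypothesis of Proposition~\ref{P:weakD}; paired with the bound $\|F(u)\|_{L^\infty_t L^1_x}\lesssim_u 1$ from Proposition~\ref{P:L^p breach}, the $R^{-1/10}$ gain delivers a contribution summable in $(R,j)$. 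Short-time pieces ($|t-\tau|\lesssim R$) are dealt with directly by Strichartz. The \emph{far} pieces are controlled by the quantitative spatial decay $\int_{|x-x(t)|\geq R'}|u|^{3p/2}\,dx\lesssim R'^{-\gamma}$ with $\gamma>1$ from Proposition~\ref{P:Spatial decay}, together with interpolation against $\|F(u)\|_{L^\infty_t L^1_x}$; standard dispersive/Strichartz bounds then close the tail estimates with room to spare.

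The main obstacle is forcing the double-Duhamel integrals to genuinely converge in both $t$ and $\tau$; this is precisely why Sections~\ref{S:subluminal}--\ref{S:quant} are needed. Two further subtleties: $|\nabla|^{s-1-\epsilon}$ is non-local, so the spatial-localization step only succeeds modulo mismatch errors controlled by Lemma~\ref{L:mismatch}; and to genuinely \emph{gain} $\epsilon$ derivatives per iterative step (rather than merely reproducing the hypothesis) one must balance the Bernstein factor from $P_{\leq N_0}$ against the decay rates in $R$ delivered by Propositions~\ref{P:weakD} and~\ref{P:Spatial decay}. This delicate bookkeeping is the analytic heart of the section.
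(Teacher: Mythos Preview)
Your proposal is correct and follows essentially the same strategy as the paper: reduce to a low-frequency, spatially-localized estimate on Whitney balls via the double Duhamel identity, then split the nonlinearity into near/far pieces relative to $x(t)$, invoking Proposition~\ref{P:weakD} (with sub-luminality and $F(u)\in L^\infty_t L^1_x$) for the near/near interaction and Proposition~\ref{P:Spatial decay} plus Strichartz for the far pieces. The paper's execution differs only in technical setup: it replaces $P_{\leq N_0}$ by the multiplier $\theta(i\nabla)$ whose convolution kernel has compact support (so that the strong Huygens principle survives exactly and a domain-of-dependence cutoff $\chi_{\dod}$ can be inserted), and it defines $\chi_{\near}$ only on the intermediate time window $\tfrac{R}{2}\le|t|\le\tfrac{10}{\delta}R$, so that the near piece obeys a crude bound $\lesssim R^{1/2+\beta}$ (Lemma~\ref{L:near}) which pairs with the far decay $\lesssim R^{-1/2-4\beta}$ (Lemma~\ref{L:far}) to control the cross terms.
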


\begin{proof}
By time-translation symmetry, it suffices to prove the claim for $t=0$.  By space-translation symmetry, we may also assume $x(0)=0$.

By Bernstein's inequality and \eqref{energy:s bound}, it suffices to prove the following space-localized low-frequency bound: For some
$\beta=\beta(\eps)>0$,
\begin{align}\label{energy:enough}
\bigl\|\theta(i\nabla) P_{\leq 1}  |\nabla|^{s-1-\eps} \ntx u(0)\bigr\|_{L_x^2(B_R)}^2 \lesssim_{u, \eps} R^{-3\beta}
\end{align}
uniformly for $R\geq R_0(u)$ and `Whitney' balls $B_R = \{ x\in\R^3 : |x-x_0| \leq R\}$ with $|x_0|=3R$.  Here $\theta$ is as defined in \eqref{theta} and
plays the role of a low-frequency projection, but one whose convolution kernel has compact support; the utility of this fact will be apparent in due course
and is responsible for the appearance of $\theta$ in Proposition~\ref{P:weakD}.

To see that \eqref{energy:enough} really does suffice, we note that
\begin{align}\label{theta bdd}
\bigl\| \theta(i\nabla) P_{\leq 1} f\|_{L_x^2}\lesssim 1 \quad \implies \quad \bigl\| P_{\leq 1} f\|_{L_x^2}\lesssim 1
\end{align}
because $|\theta(\xi)|\gtrsim 1$ for $|\xi|\leq \frac{11}{10}$, which is the Fourier support of $P_{\leq 1}$.

To obtain \eqref{energy:enough} we use both Duhamel formulae in \eqref{Duhamel} to write:
\begin{align}\label{monster}
& \bigl\|\theta(i\nabla) P_{\leq 1}  |\nabla|^{s-1-\eps} \ntx u(0)\bigr\|_{L_x^2(B_R)}^2\\
={} & - \int_0^\infty \int_{-\infty}^0 \bigl\langle \nabla \tfrac{\sin(|\nabla|t)}{|\nabla|}\theta(i\nabla) |\nabla|^{s-1-\eps} F_{\leq1}(t),\ %
        \chi_{R}\nabla \tfrac{\sin(|\nabla|\tau)}{|\nabla|}\theta(i\nabla) |\nabla|^{s-1-\eps} F_{\leq1}(\tau)\bigr\rangle\,d\tau\,dt \notag\\
&     - \int_0^\infty \int_{-\infty}^0 \bigl\langle \cos(|\nabla|t) \theta(i\nabla)|\nabla|^{s-1-\eps}F_{\leq1}(t),\ %
        \chi_{R}\cos(|\nabla|\tau) \theta(i\nabla)|\nabla|^{s-1-\eps} F_{\leq1}(\tau)\bigr\rangle\,d\tau\,dt, \notag
\end{align}
where $\chi_{R}$ is a smooth cutoff function associated to the ball $B_R$.  More precisely, we set
$$
\chi_{R}(x) = \phi\bigl(\tfrac{x-x_0}{R}\bigr)
$$
where $\phi:\R^3\to[0,1]$ is a smooth function obeying $\phi(x)=1$ for $|x|\leq1$ and $\phi(x)=0$ for $|x|>\tfrac{11}{10}$.
Recall that $x_0$ denotes the center of $B_R$ and obeys $|x_0|=3R$.

In order to bound the time integrals, we need to use the fact that we can bound Strichartz norms of $u$ far from $x(t)$.
We used this argument already in Section~\ref{S:quant} but will repeat the details here.  By choosing $R_0$ sufficiently large,
compactness of our solution guarantees that
\begin{align}\label{energy:large freq small}
\sup_{t\in\R} \Bigl\{  \bigl\| \phi^c\bigl(\tfrac{22}{\delta R} [x-x(t)]\bigr) u \bigr\|_{\dot H^{s_c}_x}
    + \bigl\| \phi^c\bigl(\tfrac{22}{\delta R} [x-x(t)]\bigr) u_t \bigr\|_{\dot H^{s_c-1}_x} \Bigr\} \leq \eta,
\end{align}
where $\phi^c=1-\phi$ and $\delta=\delta(u)>0$ denotes the sub-luminality constant from Proposition~\ref{P:subluminal}.
Requiring $\eta$ to be small enough that the small data global well-posedness theory applies and making use of simple
domain of dependence arguments, we deduce that
\begin{equation}\label{energy:small Strichartz}
\sup_{T\in\R} \Bigl\{ \bigl\| u \bigr\|_{L_t^\infty L_x^{3p/2}(\{|x-x(T)|\geq \frac{\delta R}{20}+|t-T|\})} +
    \bigl\| u \bigr\|_{L_t^{p/2} L_x^\infty(\{|x-x(T)|\geq \frac{\delta R}{20}+|t-T|\})}  \Bigl\} \lesssim \eta.
\end{equation}

Returning to \eqref{monster} and using the strong Huygens principle and the fact that $\supp \check{\theta}\subseteq [-4,4]^3$,
we see that we can insert a smooth cutoff $\chi_{\dod}$ to the appropriate domain of dependence in the middle of each
of the four products $\theta(i\nabla) |\nabla|^{s-1-\eps}$, specifically to the set of spacetime points that have a
light ray connecting them to a point of the form $(t,x)$ with $t=0$ and $\dist(x, \supp \chi_R)\leq 4\sqrt 3$. In
particular, we can choose $\chi_{\dod}$ so that
\begin{gather*}
\supp{\chi_{\dod}}\subseteq \{(t,x)\in \R\times\R^3:\, (1-\tfrac{\delta}{10^6})|t|-\tfrac65 R \leq |x-x_0|\leq (1+\tfrac{\delta}{10^6})|t|+\tfrac65 R\}, \\
\chi_{\dod}(t,x)=1 \qtq{when} \bigl| |x-x_0| - |t| \bigr| \leq \tfrac{6}{5}R\\
\text{and} \quad  |\partial^\alpha \chi_{\dod}|\lesssim_{\delta,\alpha} (|t|+R)^{-|\alpha|}
\end{gather*}
for all multi-indices $\alpha$.  We will also need a slightly fattened version of $\chi_{\dod}$ which we call $\tilde \chi_{\dod}$.  It is defined so that
\begin{gather*}
\tilde\chi_{\dod}(t,x)=1 \qtq{when} \dist(x,\supp\chi_{\dod}(t) )\leq \tfrac1{10} R + \tfrac{\delta}{10^6}|t|,\\
\supp{\tilde\chi_{\dod}}\subseteq \{(t,x):\, (1-\tfrac{3\delta}{10^6})|t|-\tfrac85 R \leq |x-x_0|\leq (1+\tfrac{3\delta}{10^6})|t|+\tfrac85 R\},\\
\text{and} \quad |\partial^\alpha_x \tilde\chi_{\dod}| \lesssim_{\alpha, \delta} (|t|+R)^{-|\alpha|}
\end{gather*}
for all multi-indices $\alpha$.

By \eqref{x is Lip}, we have $|x(t)|\leq |t| + 2C_u$ for all $t\in \R$ and thus, for $R$ large enough (to defeat~$C_u$),
\begin{align}\label{sep 1}
\dist(x(t),\supp \tilde\chi_{\dod}(t))\geq \tfrac{\delta}{20}(|t| + R) \qtq{for} 0\leq |t|\leq \tfrac{R}2.
\end{align}
On the other hand, by the sub-luminality bound \eqref{E:subluminal}, we know that $|x(t)|\leq (1-\delta)|t|$ for $|t|\geq 1/\delta$.  Thus,
$$
|x(t)-x_0|\leq 3R + (1-\delta)|t| \quad \text{for}\quad |t|\geq \tfrac1{\delta}
$$
and hence, for $|t|\geq \tfrac{10}{\delta}$R,
\begin{align}\label{sep 2}
\dist(x(t),\supp \tilde\chi_{\dod}(t))\geq \bigl((1-\tfrac{3\delta}{10^6})|t|-\tfrac85 R\bigr) - \bigl(3R + (1-\delta)|t|\bigr)\geq \tfrac{\delta}{20}(|t|+R).
\end{align}

The most dangerous regime is when $|t|\in [\frac R2, \frac{10}{\delta}R]$, for then $x(t)$ may lie near (indeed inside) the support of $\chi_{\dod}$.
Here we make use of a further smooth partition of unity, namely, $1=\chi_{\near}+ \chi_{\far}$ with
$$
\supp\chi_{\near}\subseteq\{ (t,x):\, \tfrac R2\leq |t| \leq \tfrac{10}{\delta}R \text{ and } |x-x(t)|\leq \tfrac\delta{5} (|t|+R)\},
$$
and
$$
\chi_{\far}(t,x)=0 \qtq{when} \tfrac R2\leq |t| \leq \tfrac{10}{\delta}R \qtq{and} |x-x(t)|\leq \tfrac\delta{10} (|t|+R).
$$
Note that this can be done in a manner such that
$$
|\partial^\alpha_x \chi_{\near}| + |\partial^\alpha_x \chi_{\far}|\lesssim_{\alpha, \delta} (|t|+R)^{-|\alpha|}.
$$
for all multi-indices $\alpha$.  It is for the sake of notational convenience that we have defined $\chi_{\near}$ to be identically equal to zero for
$t$ outside the region $\tfrac R2\leq |t| \leq \tfrac{10}{\delta}R$ and correspondingly, $\chi_{\far}\equiv 1$ there.

We will also need a slightly fattened version $\tilde \chi_\far$ of $\chi_{\far}$, chosen so that
\begin{gather*}
\tilde\chi_{\far}(t,x)=1 \qtq{when} \dist(x,\supp\chi_{\far}(t))\leq \tfrac{\delta}{40}(|t|+R) ,\\
\supp \tilde\chi_{\far}(t) \subseteq \bigl\{ x:\, |x-x(t)|\geq \tfrac\delta{20} (|t|+R)\bigr\}   \qtq{when}  \tfrac12 R \leq |t| \leq \tfrac{10}{\delta}R ,\\
\text{and} \quad |\partial^\alpha_x \tilde\chi_{\far}|\lesssim_{\alpha, \delta} (|t|+R)^{-|\alpha|},
\end{gather*}
for all multi-indices $\alpha$.

Collecting \eqref{sep 1}, \eqref{sep 2}, and the definition of $\tilde \chi_\far$, we note that
\begin{align}\label{sep final}
\dist(x(t),\supp [\tilde\chi_{\dod}\tilde \chi_\far](t))\geq \tfrac{\delta}{20}(|t|+R).
\end{align}
On the other hand, for $(t,x), (\tau,y)\in \supp \chi_{\near}$ with $t>0$ and $\tau<0$, we obtain
\begin{align}\label{big O R}
|t|+|\tau|+|x|+|y| \lesssim_{\delta} R
\end{align}
and, more importantly,
\begin{align}\label{separation}
|t-\tau|-|x-y|
&\geq |t-\tau|- (1-\delta)|t-\tau| - \tfrac{\delta}{5}(t+R) - \tfrac{\delta}{5}(|\tau| + R)\notag\\
&\geq \tfrac{4\delta}{5}|t-\tau| -\tfrac{2\delta}{5}R\geq \tfrac{2\delta}{5}R,
\end{align}
by sub-luminality and taking $R>R_0(u)\geq1/\delta$. The significance of these inequalities is that they allow us to apply Proposition~\ref{P:weakD};
see Lemma~\ref{L:near-near} below.

Before beginning to estimate \eqref{monster}, we first note some consequences of \eqref{energy:small Strichartz} in terms of our cutoffs:

\begin{lemma}\label{L:Strichartz far}
Under the assumptions above (and taking $R_0$ even larger if necessary),
\begin{align*}
\bigl\| \tilde\chi_{\dod}\tilde\chi_{\far} u \bigr\|_{L_t^\infty L_x^{3p/2}(I\times\R^3)} +
    \bigl\| \tilde\chi_{\dod}\tilde\chi_{\far} u \bigr\|_{L_t^{p/2} L_x^\infty(I\times\R^3)} \lesssim_u 1,
\end{align*}
uniformly for $I=[-\tfrac{10}{\delta}R,\tfrac{10}{\delta}R]$ or $I=[T,2T] \cup [-2T,-T]$ with $T\geq \tfrac{10}{\delta}R$.
\end{lemma}

\begin{proof}
We will only prove the claim for positive times $t$.  For negative times, the argument is similar.

Recall that by \eqref{energy:small Strichartz},
\begin{equation}\label{energy:sS b}
\sup_{T\in\R} \Bigl\{ \bigl\| u \bigr\|_{L_t^\infty L_x^{3p/2}(\{|x-x(T)|\geq \frac{\delta R}{20}+|t-T|\})} +
    \bigl\| u \bigr\|_{L_t^{p/2} L_x^\infty(\{|x-x(T)|\geq \frac{\delta R}{20}+|t-T|\})}  \Bigl\} \lesssim \eta.
\end{equation}
Thus, choosing $T=0$, we obtain
\begin{align*}
\bigl\| \tilde\chi_{\dod}\tilde\chi_{\far} u \bigr\|_{L_t^\infty L_x^{3p/2}([0,\frac R2]\times\R^3)} +
    \bigl\| \tilde\chi_{\dod}\tilde\chi_{\far} u \bigr\|_{L_t^{p/2} L_x^\infty([0,\frac R2]\times\R^3)} \lesssim \eta
\end{align*}
since if $0\leq t \leq \tfrac R2$ and $\tilde\chi_\dod(t,x)\neq 0$, then  $|x|\geq \frac45 R$.

On the other hand, choosing $T\geq \frac{10}{\delta}R$ gives
\begin{align*}
\bigl\| \tilde\chi_{\dod}\tilde\chi_{\far} u \bigr\|_{L_t^\infty L_x^{3p/2}([T,2T]\times\R^3)} +
    \bigl\| \tilde\chi_{\dod}\tilde\chi_{\far} u \bigr\|_{L_t^{p/2} L_x^\infty([T,2T]\times\R^3)} \lesssim \eta.
\end{align*}
Indeed, using \eqref{E:subluminal}, for $(t,x)\in \supp \tilde\chi_{\dod}\tilde\chi_{\far}$ and $t\geq T \geq \frac{10}{\delta}R$, we have
\begin{align*}
|x-x(T)| &\geq |x-x_0| - |x_0| - |x(T)| \\
& \geq (1-\tfrac{3\delta}{10^6})|t-T| - (3+\tfrac85) R + (1-\tfrac{3}{10^6})\delta T  \\
&\geq  (1-\tfrac{3\delta}{10^6})|t-T| + \tfrac{\delta R}{20} + \tfrac{\delta}{50}T
\end{align*}
and so $|x-x(T)| \geq |t-T| +\tfrac{\delta R}{20}$ provided $\tfrac{3\delta}{10^6}|t-T|\leq \tfrac{\delta}{50} T$, which is true when $t\in[T,2T]$.

It remains to consider $\tfrac R2\leq t\leq \tfrac{10}{\delta}R$.  For this region, we choose a mesh
$$
\tfrac R2=T_0<T_1<\cdots<T_K=\tfrac{10}{\delta}R \qtq{with} \tfrac{\delta}{200}R \leq |T_k-T_{k-1}|\leq \tfrac{\delta}{100}R \qtq{for all} 1\leq k\leq K.
$$
Note that $K\lesssim \delta^{-2}$.  Then for $(t,x)\in \supp \tilde\chi_{\dod}\tilde\chi_{\far}$ with $t\in [T_{k-1},T_k]$,
$$
|x-x(T_{k-1})|\geq |x-x(t)|-|x(t)-x(T_{k-1})|\geq \tfrac{\delta}{20}(|t|+R) - |t-T_{k-1}| -2C_u \geq \tfrac{\delta}{20} R + |t-T_{k-1}|,
$$
provided $R\geq R_0$ with $R_0$ sufficiently large depending on $u$.  Thus, using \eqref{energy:sS b} with $T=T_k$ for $0\leq k\leq K-1$
and summing, we derive
$$
\bigl\| \tilde\chi_{\dod}\tilde\chi_{\far} u \bigr\|_{L_t^\infty L_x^{3p/2}([\frac R2,\frac{10}{\delta}R]\times\R^3)} +
    \bigl\| \tilde\chi_{\dod}\tilde\chi_{\far} u \bigr\|_{L_t^{p/2} L_x^\infty([\frac R2,\frac{10}{\delta}R]\times\R^3)}
    \lesssim \eta \delta^{-2}  \lesssim_u  1.
$$

This concludes the proof of the lemma.
\end{proof}

After breaking up the integrals in \eqref{monster} by introducing the cutoffs $\chi_\dod\chi_\near$ and $\chi_\dod\chi_\far$, the required estimate follows directly
from the next three lemmas.

\begin{lemma}\label{L:far}
Under the assumptions above, for some small $\beta=\beta(\eps)>0$ we have
\begin{align*}
\Bigl\| \int_0^\infty \nabla \tfrac{\sin(|\nabla|t)}{|\nabla|}&\theta(i\nabla)\chi_{\dod}\chi_{\far} |\nabla|^{s-1-\eps} F_{\leq1}(t)\, dt \Bigr\|_{L_x^2(\R^3)}\\
 &+ \Bigl\| \int_0^\infty \cos(|\nabla|t)\theta(i\nabla)\chi_{\dod}\chi_{\far} |\nabla|^{s-1-\eps} F_{\leq1}(t) \, dt\Bigr\|_{L_x^2(\R^3)}
\lesssim_u R^{-1/2-4\beta}
\end{align*}
and similarly,
\begin{align*}
\Bigl\| \int_{-\infty}^0 \nabla \tfrac{\sin(|\nabla|\tau)}{|\nabla|}&\theta(i\nabla)\chi_{\dod}\chi_{\far} |\nabla|^{s-1-\eps} F_{\leq1}(\tau) \, d\tau
\Bigr\|_{L_x^2(\R^3)}\\
 &+ \Bigl\| \int_{-\infty}^0 \cos(|\nabla|\tau)\theta(i\nabla)\chi_{\dod}\chi_{\far} |\nabla|^{s-1-\eps} F_{\leq1}(\tau) \, d\tau\Bigr\|_{L_x^2(\R^3)}
\lesssim_u R^{-1/2-4\beta}.
\end{align*}
\end{lemma}

\begin{proof}
We will only present the proof of the first inequality; for negative times, the argument is similar.

For the remainder of this proof, all spacetime norms are over the region $[0,\infty)\times \R^3$.  Also, to ease
notation we write $\chi=\sqrt{\chi_{\dod}\chi_{\far}}$ and $\tilde\chi=\tilde \chi_{\dod}\tilde \chi_{\far}$.

Using the $L^2_x$-boundedness of $\theta(i\nabla)$, then the Strichartz inequality (with $6\leq q<\infty$) followed by H\"older's inequality, we obtain
\begin{align}\label{puach}
\Bigl\| \int_0^\infty & \nabla \tfrac{\sin(|\nabla|t)}{|\nabla|}\theta(i\nabla)\chi^2  |\nabla|^{s-1-\eps} F_{\leq1}(t) \, dt\Bigr\|_2
+ \Bigl\| \int_0^\infty \cos(|\nabla|t)\theta(i\nabla)\chi^2 |\nabla|^{s-1-\eps} F_{\leq1}(t) \, dt\Bigr\|_2 \notag\\
&\lesssim \bigl\| \nabla \bigl[\chi^2 |\nabla|^{s-1-\eps} F_{\leq1}\bigr] \bigr\|_{L_t^{\frac{2q}{q+6}}L_x^{\frac{q}{q-1}}}\notag\\
&\lesssim \bigl\| \chi^2\nabla|\nabla|^{s-1-\eps} F_{\leq1} \bigr\|_{L_t^{\frac{2q}{q+6}}L_x^{\frac{q}{q-1}}}
+\| \nabla \chi\|_{L_x^3} \bigl\|\chi|\nabla|^{s-1-\eps}  F_{\leq1} \bigr\|_{L_t^{\frac{2q}{q+6}}L_x^{\frac{3q}{2q-3}}}\notag\\
&\lesssim \bigl\| \chi\nabla|\nabla|^{s-1-\eps} F_{\leq1} \bigr\|_{L_t^{\frac{2q}{q+6}}L_x^{\frac{q}{q-1}}}
+\bigl\|\chi|\nabla|^{s-1-\eps}  F_{\leq1} \bigr\|_{L_t^{\frac{2q}{q+6}}L_x^{\frac{3q}{2q-3}}}\notag\\
&\lesssim \bigl\| \nabla|\nabla|^{s-1-\eps} F(\tilde \chi u)\bigr\|_{L_t^{\frac{2q}{q+6}}L_x^{\frac{q}{q-1}}}
+\bigl\||\nabla|^{s-1-\eps}  F(\tilde \chi u)\bigr\|_{L_t^{\frac{2q}{q+6}}L_x^{\frac{3q}{2q-3}}}\notag\\
&\quad +\bigl\| \chi\nabla|\nabla|^{s-1-\eps}  P_{\leq1}  (\tilde \chi^c F) \bigr\|_{L_t^{\frac{2q}{q+6}}L_x^{\frac{q}{q-1}}}
+\bigl\|\chi|\nabla|^{s-1-\eps}  P_{\leq1}  (\tilde \chi^c F) \bigr\|_{L_t^{\frac{2q}{q+6}}L_x^{\frac{3q}{2q-3}}},
\end{align}
where $\tilde\chi^c=1-\tilde \chi^{p+1}$ so that $F(u)=F(\tilde \chi u) + \tilde \chi^cF(u)$.

To complete the proof, we have to show that each of the four terms appearing on the right-hand side of \eqref{puach} is bounded by $R^{-1/2-4\beta}$.
In order to appeal to Lemma~\ref{L:Strichartz far}, we will sometimes need to partition $[0,\infty)$ into the collection of intervals $I_j=[T_j,T_{j+1}]$
with $T_0:=0$ and $T_j=\tfrac{10}{\delta}R 2^{j-1}$ for all $j\geq 1$.

We start with the first term in RHS\eqref{puach}.  By the fractional chain and product rules together with H\"older's inequality, Sobolev embedding,
Lemma~\ref{L:Strichartz far}, Proposition~\ref{P:L^p breach}, and the combination of Proposition~\ref{P:Spatial decay} and~\eqref{sep final},
\begin{align*}
&\bigl\| \nabla|\nabla|^{s-1-\eps} F(\tilde \chi u)\bigr\|_{L_t^{\frac{2q}{q+6}}L_x^{\frac{q}{q-1}}}\\
&\lesssim \sum_{j\geq 0}\bigl\| \nabla|\nabla|^{s-1-\eps} F(\tilde \chi u)\bigr\|_{L_t^{\frac{2q}{q+6}}L_x^{\frac{q}{q-1}}(I_j\times\R^3)}\\
&\lesssim \sum_{j\geq 0} \Bigl[\bigl\| |\nabla|^{s-\eps}u \bigr\|_{L_t^\infty L_x^{\frac{2q}{q-1}}(I_j\times\R^3)}
+ \bigl\| |\nabla|^{s-\eps}\tilde \chi\bigr\|_{L_t^\infty L_x^{\frac{3}{s-\eps}}(I_j\times\R^3)}
    \|u\|_{L_t^\infty L_x^{\frac{6q}{q(3-2s+2\eps)-3}}(I_j\times\R^3)}\Bigr]\\
&\qquad \qquad \qquad \times \|\tilde \chi u\|_{L_t^{\frac p2}L_x^\infty(I_j\times\R^3)}^{\frac{p(q+6)}{4q}}
    \|\tilde \chi u\|_{L_t^\infty L_x^p(I_j\times\R^3)}^{\frac{3p}{2q}}\|\tilde \chi u\|_{L_t^\infty L_x^{\frac{3p}2}(I_j\times\R^3)}^{\frac{3p(q-4)}{4q}}\\
&\lesssim_u \sum_{j\geq 0} (T_j+R)^{-\frac{(q-4)\gamma}{2q}}\bigl\| |\nabla|^{s-\eps}u \bigr\|_{L_t^\infty L_x^{\frac{2q}{q-1}}(I_j\times\R^3)}\\
&\lesssim_u R^{-\frac{(q-4)\gamma}{2q}}.
\end{align*}
In the last step, we used Sobolev embedding followed by interpolation, \eqref{energy:s bound}, and Proposition~\ref{P:L^p breach}.  This requires
$\tfrac{3ps}{(3p-4)\eps}\leq  q<\tfrac{(3p^2+20p-16)s}{(3p^2+8p-16)\eps}$.  In order to make the power of $R$ less than $-1/2$, it suffices to take $q$ large,
which in turn forces $\eps$ to be small.

Arguing similarly, we estimate the second term in RHS\eqref{puach} as follows:
\begin{align*}
&\bigl\||\nabla|^{s-1-\eps}  F(\tilde \chi u)\bigr\|_{L_t^{\frac{2q}{q+6}}L_x^{\frac{3q}{2q-3}}}\\
&\lesssim \sum_{j\geq 0} \bigl\||\nabla|^{s-1-\eps}  F(\tilde \chi u)\bigr\|_{L_t^{\frac{2q}{q+6}}L_x^{\frac{3q}{2q-3}}(I_j\times\R^3)}\\
&\lesssim \sum_{j\geq 0} \Bigl[\bigl\| |\nabla|^{s-1-\eps}u \bigr\|_{L_t^\infty L_x^{\frac{6q}{q-3}}(I_j\times\R^3)}
+ \bigl\| |\nabla|^{s-1-\eps}\tilde \chi\bigr\|_{L_t^\infty L_x^{\frac{3}{s-1-\eps}}(I_j\times\R^3)}
    \|u\|_{L_t^\infty L_x^{\frac{6q}{q(3-2s+2\eps)-3}}(I_j\times\R^3)}\Bigr]\\
&\qquad \qquad \qquad \times \|\tilde \chi u\|_{L_t^{\frac p2}L_x^\infty(I_j\times\R^3)}^{\frac{p(q+6)}{4q}}
    \|\tilde \chi u\|_{L_t^\infty L_x^p(I_j\times\R^3)}^{\frac{3p}{2q}}\|\tilde \chi u\|_{L_t^\infty L_x^{\frac{3p}2}(I_j\times\R^3)}^{\frac{3p(q-4)}{4q}}\\
&\lesssim_u \sum_{j\geq 0} (T_j+R)^{-\frac{(q-4)\gamma}{2q}}\bigl\| |\nabla|^{s-\eps}u \bigr\|_{L_t^\infty L_x^{\frac{2q}{q-1}}(I_j\times\R^3)}\\
&\lesssim_u R^{-\frac{(q-4)\gamma}{2q}},
\end{align*}
which again yields the desired decay in $R$ for $q$ large enough.

In order to estimate the remaining two terms in RHS\eqref{puach}, we note that
\begin{gather*}
\dist(\supp \chi, \supp\tilde \chi^c) \gtrsim_\delta |t|+ R.
\end{gather*}
Hence, by the mismatch estimate Lemma~\ref{L:mismatch} together with Proposition~\ref{P:L^p breach},
\begin{align*}
\bigl\|\chi \nabla |\nabla|^{s-1-\eps} P_{\leq1} (\tilde \chi^c F) \bigr\|_{L_t^{\frac{2q}{q+6}}L_x^{\frac{q}{q-1}}}
&+\bigl\|\chi |\nabla|^{s-1-\eps} P_{\leq1} (\tilde \chi^c F) \bigr\|_{L_t^{\frac{2q}{q+6}}L_x^{\frac{3q}{2q-3}}}\\
&\lesssim \|F(u)\|_{L_t^\infty L_x^1} \bigl\| (|t|+R)^{-(s-\eps)-\frac3q}\bigr\|_{L_t^{\frac{2q}{q+6}}([0,\infty))}\\
&\lesssim_u R^{-\frac12-4\beta},
\end{align*}
provided $4\beta<s-1-\eps$.  This finishes the proof of the lemma.
\end{proof}

\begin{lemma}\label{L:near}
Under the assumptions above, for any $\beta>0$ we have
\begin{align*}
\Bigl\| \int_0^\infty \nabla\tfrac{\sin(|\nabla|t)}{|\nabla|}&\theta(i\nabla)\chi_{\dod}\chi_{\near} |\nabla|^{s-1-\eps} F_{\leq1}(t) \, dt\Bigr\|_{L_x^2(\R^3)}\\
 &+ \Bigl\| \int_0^\infty \cos(|\nabla|t)\theta(i\nabla)\chi_{\dod}\chi_{\near} |\nabla|^{s-1-\eps} F_{\leq1}(t) \, dt\Bigr\|_{L_x^2(\R^3)}
\lesssim_u R^{1/2+ \beta}
\end{align*}
and similarly,
\begin{align*}
\Bigl\| \int_{-\infty}^0 \nabla\tfrac{\sin(|\nabla|\tau)}{|\nabla|}&\theta(i\nabla)\chi_{\dod}\chi_{\near} |\nabla|^{s-1-\eps} F_{\leq1}(\tau) \, d\tau\Bigr
\|_{L_x^2(\R^3)}\\
 &+ \Bigl\| \int_{-\infty}^0 \cos(|\nabla|\tau)\theta(i\nabla)\chi_{\dod}\chi_{\near} |\nabla|^{s-1-\eps} F_{\leq1}(\tau)\, d\tau \Bigr\|_{L_x^2(\R^3)}
\lesssim_u R^{1/2+\beta}.
\end{align*}
\end{lemma}

\begin{proof}
Again, we present the proof for positive times only.

First, recall that $\chi_{\near}$ is supported in the spacetime region where $\frac R2\leq t\leq \frac{10}{\delta}R$.
Thus, for the remainder of this proof, all spacetime norms will be over the region $[\frac R2, \frac{10}{\delta}R]\times\R^3$.

Now, by the Strichartz inequality followed by H\"older's inequality, Sobolev embedding, Bernstein's inequality,
and Proposition~\ref{P:L^p breach}, we obtain
\begin{align*}
\Bigl\| \int_0^\infty  \nabla& \tfrac{\sin(|\nabla|t)}{|\nabla|} \theta(i\nabla)\chi_{\dod}\chi_{\near} |\nabla|^{s-1-\eps} F_{\leq1}(t) \, dt\Bigr\|_{L_x^2(\R^3)}\\
 &\qquad\qquad\qquad + \Bigl\| \int_0^\infty \cos(|\nabla|t)\theta(i\nabla)\chi_{\dod}\chi_{\near} |\nabla|^{s-1-\eps} F_{\leq1}(t)\, dt \Bigr\|_{L_x^2(\R^3)}\\
 &\lesssim \bigl\| \nabla |\nabla|^{s-1-\eps} F_{\leq1} \bigr\|_{L_t^{\frac{2q}{q+6}} L_x^{\frac q{q-1}}}
 + \bigl\|\nabla (\chi_{\dod} \chi_{\far})\bigr\|_{L_x^3} \bigl\| |\nabla|^{s-1-\eps} F_{\leq1}\bigr\|_{L_t^{\frac{2q}{q+6}} L_x^{\frac{3q}{2q-3}}}\\
 &\lesssim \bigl\| |\nabla|^{s-\eps} F_{\leq1} \bigr\|_{L_t^{\frac{2q}{q+6}} L_x^{\frac q{q-1}}}\\
 &\lesssim \bigl\|F(u) \bigr\|_{L_t^{\frac{2q}{q+6}} L_x^1}\\
 &\lesssim_u R^{\frac12+\frac3q},
\end{align*}
for any $6\leq q<\infty$.  The claim now follows by taking $q$ sufficiently large depending on $\beta$.
\end{proof}

We now turn to the most significant region of integration, where $(t,x(t))$ and $(\tau, x(\tau))$ may lie in the domain of dependence of $B_R$.

\begin{lemma}\label{L:near-near}
With the assumptions above and $\beta<1/30$, we have
\begin{align}\label{near-near}
&\Bigl|\int_0^\infty \int_{-\infty}^0 \bigl\langle \nabla\tfrac{\sin(|\nabla|t)}{|\nabla|}\theta(i\nabla) \chi_{\dod}\chi_{\near}|\nabla|^{s-1-\eps}
F_{\leq1}(t),\notag\\
&\qquad \qquad \qquad \qquad \qquad \qquad
        \chi_{R}\nabla \tfrac{\sin(|\nabla|\tau)}{|\nabla|}\theta(i\nabla) \chi_{\dod}\chi_{\near} |\nabla|^{s-1-\eps} F_{\leq1}(\tau)\bigr\rangle\,d\tau
\,dt
\\
&+\int_0^\infty \int_{-\infty}^0 \bigl\langle \cos(|\nabla|t)\theta(i\nabla) \chi_{\dod}\chi_{\near}|\nabla|^{s-1-\eps} F_{\leq1}(t),\notag\\
&\qquad \qquad \qquad \qquad \qquad \qquad
        \chi_{R} \cos(|\nabla|\tau)\theta(i\nabla) \chi_{\dod}\chi_{\near} |\nabla|^{s-1-\eps} F_{\leq1}(\tau)\bigr\rangle\,d\tau\,dt\Bigr|
\lesssim_u R^{-3\beta}.\notag
\end{align}
\end{lemma}

\begin{proof}
The claim will follow from Proposition~\ref{P:weakD}; the hypothesis \eqref{hyp} holds by virtue of \eqref{big O R} and \eqref{separation}.
Thus, using this proposition followed by Bernstein's inequality and Proposition~\ref{P:L^p breach}, we obtain
\begin{align*}
\text{LHS}\eqref{near-near}
&\lesssim R^{-1/10} \bigl\||\nabla|^{s-1-\eps} F_{\leq1}\bigr\|_{L_t^\infty L_x^1}^2
\lesssim R^{-1/10} \| F\|_{L_t^\infty L_x^1}^2
\lesssim_u R^{-1/10}.
\end{align*}
This completes the proof of the lemma.
\end{proof}

We now return to the proof of Theorem~\ref{T:reduce s}.  Recall that it suffices to prove \eqref{energy:enough}.  This follows for
$\beta < \min(\tfrac1{30},\tfrac{s-1-\eps}{4})$ by using Lemmas~\ref{L:far}, \ref{L:near}, and \ref{L:near-near} to estimate \eqref{monster}.
\end{proof}

\begin{corollary}\label{C:finite energy}
Let $u$ be a global solution to \eqref{nlw} that is almost periodic modulo symmetries.  Recall that
\begin{align}\label{crit bdd}
\bigl\|(u,u_t)\bigr\|_{L_t^\infty(\R;\dot H^{s_c}_x\times\dot H^{s_c-1}_x)}<\infty.
\end{align}
Assume also that
$$
\inf_{t\in\R} N(t) \geq 1.
$$
Then $\ntx u \in L_t^\infty L_x^2;$ in particular, the energy $E(u)$
of the solution is finite.  Moreover, there exists $\beta=\beta(p)>0$ so that
\begin{equation}\label{ultimate}
 \bigl\|\langle x-x(t)\rangle^\beta  P_{\leq 1}\ntx u \bigr\|_{L_t^\infty L_x^2} \lesssim_u 1.
\end{equation}
\end{corollary}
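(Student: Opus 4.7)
The plan is to iterate Theorem~\ref{T:reduce s} until it can be pushed no further, close the remaining gap by hand using the locality of $\nabla$, and then sum the resulting weighted Whitney-ball bounds to deduce~\eqref{ultimate}. Starting from $s_0 = s_c$, where the hypothesis of Theorem~\ref{T:reduce s} is supplied by~\eqref{crit bdd}, I would apply the theorem iteratively with fixed step $\eps_0 = \eps_0(p)$. After $\lceil(s_c-1)/\eps_0\rceil - 1$ applications we arrive at some $s_* \in (1, 1+\eps_0]$ with $\||\nabla|^{s_*-1}\ntx u\|_{L^\infty_t L^2_x} < \infty$, and inspection of the proof of Theorem~\ref{T:reduce s} (see~\eqref{energy:enough}) yields the companion weighted estimate
$$ \|\theta(i\nabla) P_{\leq 1} |\nabla|^{s_*-1}\ntx u(t)\|_{L^2_x(B_R)}^2 \lesssim_u R^{-3\beta_1} $$
for some $\beta_1 > 0$, uniformly in $t$ and in balls $B_R = B(x_0,R)$ with $|x_0 - x(t)| = 3R$ and $R \geq R_0(u)$.

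Theorem~\ref{T:reduce s} requires $s-1-\eps > 0$, so the last step from $s_*$ down to one derivative must be carried out separately. I would re-run the double Duhamel identity~\eqref{monster} with $s=1$ and $\eps=0$, pulling the \emph{local} operator $\nabla$ outside in place of the non-local $|\nabla|^{s-1-\eps}$. The proof then simplifies in two essential ways: the fractional chain and product rules used in Lemma~\ref{L:far} collapse to the Leibniz identity $\nabla F(u) = F'(u)\nabla u$, and commutators with spatial cutoffs reduce to the exact identity $[\nabla,\chi] = (\nabla\chi)$ rather than requiring the mismatch estimate Lemma~\ref{L:mismatch}. With these simplifications the far, near, and near--near regions are estimated exactly as in Lemmas~\ref{L:far}, \ref{L:near}, and~\ref{L:near-near}, yielding
$$ \|\theta(i\nabla) P_{\leq 1} \ntx u(t)\|_{L^2_x(B_R)}^2 \lesssim_u R^{-3\beta} $$
for some $\beta > 0$. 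I expect this final step to be the main obstacle, not conceptually but rather in carefully auditing each term so that enough $R$-decay survives the tighter Strichartz bookkeeping in the $\eps = 0$ endpoint.

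To convert the Whitney-ball bound into~\eqref{ultimate}, decompose $\R^3$ as the ball $\{|x-x(t)|\leq R_0\}$ together with dyadic annuli $A_k = \{|x-x(t)| \sim 2^k\}$ for $2^k \geq R_0$. Each $A_k$ is covered by $O(1)$ Whitney balls of radius $\sim 2^k$, so the bound above gives $\|\theta(i\nabla)P_{\leq 1}\ntx u(t)\|_{L^2(A_k)}^2 \lesssim_u 2^{-3k\beta}$; integrating against $\langle x-x(t)\rangle^{2\gamma}$ and summing yields convergence for any $\gamma < 3\beta/2$, while the contribution of the innermost ball is controlled by H\"older's inequality together with the Sobolev embedding $\dot H^{s_c-1}_x \hookrightarrow L^{3p/(p+2)}_x$ applied to $\ntx u$. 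Since $\theta$ is bounded below on the Fourier support of $P_{\leq 1}$, the multiplier $\theta(i\nabla)$ can be inverted there by convolution with a Schwartz kernel, which preserves polynomial weighted $L^2$ spaces; this removes $\theta(i\nabla)$ and delivers~\eqref{ultimate}. The weighted bound trivially implies $\|P_{\leq 1}\ntx u\|_{L^\infty_t L^2_x} \lesssim_u 1$, and combining with the high-frequency bound $\|P_{>1}\ntx u\|_{L^2_x} \leq \||\nabla|^{s_c-1}\ntx u\|_{L^2_x}$ (valid because $s_c > 1$) yields $\ntx u \in L^\infty_t L^2_x$. Finiteness of $E(u)$ then follows by interpolating between $\dot H^1_x \hookrightarrow L^6_x$ and the critical embedding $\dot H^{s_c}_x \hookrightarrow L^{3p/2}_x$ to see that $u \in L^{p+2}_x$, using $6 \leq p+2 \leq 3p/2$ for $p \geq 4$.
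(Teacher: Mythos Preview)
Your proposal follows essentially the same route as the paper's proof: iterate Theorem~\ref{T:reduce s} down to some $s_*>1$, then re-run the double-Duhamel argument once more exploiting the locality of $\nabla$, and sum the resulting Whitney-ball bounds. One minor divergence worth flagging: the paper replaces $\theta(i\nabla)P_{\leq 1}$ by $\theta(i\nabla)^2$ in the final step, so that the compact support of $\check\theta$ makes the tail terms $\chi\,\theta(i\nabla)(\tilde\chi^c F)$ vanish identically --- contrary to your remark, retaining $P_{\leq 1}$ means a mismatch-type estimate is still needed (the non-locality in Lemma~\ref{L:far} comes from $|\nabla|^{s-1-\eps}P_{\leq 1}$, not from a commutator with $\nabla$), though for $P_{\leq 1}$ alone this is trivial by the rapid decay of its kernel.
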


\begin{proof}
Applying Theorem~\ref{T:reduce s} iteratively, finitely many times, we conclude that
\begin{align}\label{s>1}
\ntx u \in L_t^\infty \dot H^{s-1}_x \qtq{for each} 1<s\leq s_c.
\end{align}
To pass from this to finite energy, we follow the strategy used in Theorem~\ref{T:reduce s}, indeed with some
simplifications due to the local nature of the operator $\nabla$ as opposed to $|\nabla|^{s-1-\eps}$.  As $P_{\leq 1}$
is also non-local, we replace it by $\theta(i\nabla)$, which is almost local.

Note that it suffices to prove \eqref{ultimate}.  Indeed, using this to bound the low-frequency part of the solution
and using \eqref{crit bdd} and Bernstein's inequality to bound the high frequencies, we deduce that
$\ntx u \in L_t^\infty L_x^2$.  This renders the first two terms in the energy \eqref{energy} finite.  Using
Sobolev embedding and interpolation between $u \in L_t^\infty \dot H^1_x$ and $u\in L_t^\infty \dot H^{s_c}_x$,
we also see that the potential energy term is finite.  Thus, $E(u)<\infty$.

Therefore, it remains to establish \eqref{ultimate}.  By time-translation symmetry, it suffices to prove the claim for $t=0$.
By space-translation symmetry, we may also assume $x(0)=0$.  Arguing as we did for \eqref{energy:enough}, it suffices to show
\begin{align}\label{energy:enough 2}
\bigl\| \theta(i\nabla)^2 \ntx u(0)\bigr\|_{L_x^2(B_R)}^2 \lesssim_u R^{-3\beta}
\end{align}
uniformly for $R\geq R_0(u)$.

To obtain \eqref{energy:enough 2} we use the Duhamel formulae \eqref{Duhamel} to write:
\begin{align}\label{monster 2}
\bigl\| \theta(i\nabla)^2 & \ntx u(0)\bigr\|_{L_x^2(B_R)}^2\\
={} & - \int_0^\infty \int_{-\infty}^0 \bigl\langle \nabla \tfrac{\sin(|\nabla|t)}{|\nabla|}\theta(i\nabla)^2 F(t),\ %
        \chi_{R} \nabla \tfrac{\sin(|\nabla|\tau)}{|\nabla|}\theta(i\nabla)^2 F(\tau)\bigr\rangle\,d\tau\,dt \notag\\
&     - \int_0^\infty \int_{-\infty}^0 \bigl\langle \cos(|\nabla|t) \theta(i\nabla)^2 F(t),\ %
        \chi_{R}\cos(|\nabla|\tau) \theta(i\nabla)^2 F(\tau)\bigr\rangle\,d\tau\,dt, \notag
\end{align}
where $\chi_{R}$ is a smooth cutoff function associated to the ball $B_R$, as previously.

To estimate \eqref{monster 2}, we decompose spacetime in exactly the same manner as in the proof of
Theorem~\ref{T:reduce s}, by introducing $\chi_{\dod}$, $\chi_\near$, and $\chi_\far$ between the two copies of
$\theta(i\nabla)$.  Lemmas~\ref{L:Strichartz far}, \ref{L:near}, and \ref{L:near-near} continue to hold when
$s-1-\eps=0$ and with the replacement of $P_{\leq 1}$ by $\theta(i\nabla)$.  In connection with this, we should note
that Bernstein inequalities continue to hold:
$$
\| \theta(i\nabla) f \|_{L^q_x(\R^3)} + \| \nabla \theta(i\nabla) f \|_{L^q_x(\R^3)} \lesssim \| f \|_{L^p_x(\R^3)}
$$
for all $1\leq p \leq q\leq \infty$.

The only part of the proof of Theorem~\ref{T:reduce s} that needs to change is the proof of Lemma~\ref{L:far}.
Corollary~\ref{C:finite energy} thus follows  from the following substitute:

\begin{lemma}
Under the hypotheses of Corollary~\ref{C:finite energy},
\begin{align}\label{far piece}
&\Bigl\| \int_0^\infty \!\!\!\nabla \tfrac{\sin(|\nabla|t)}{|\nabla|}\theta(i\nabla)\chi_{\dod}\chi_{\far} \theta(i\nabla) F(t)\,dt \Bigr\|_2
 \!\! + \Bigl\| \int_0^\infty \!\!\!\cos(|\nabla|t)\theta(i\nabla)\chi_\dod \chi_\far  \theta(i\nabla) F(t)\, dt \Bigr\|_2 \notag\\
&\Bigl\| \int_{-\infty}^0 \!\!\! \!\nabla \tfrac{\sin(|\nabla|\tau)}{|\nabla|}\theta(i\nabla) \chi_\dod \chi_\far \theta(i\nabla) F(\tau) d\tau\Bigr\|_2
 \!\! + \Bigl\| \int_{-\infty}^0 \!\!\! \! \cos(|\nabla|\tau)\theta(i\nabla)\chi_\dod \chi_\far \theta(i\nabla) F(\tau) d\tau\Bigr\|_2 \notag\\
&\qquad \qquad\qquad \qquad\lesssim_u R^{-1/2-4\beta},
\end{align}
for some $\beta=\beta(p)>0$.
\end{lemma}

\begin{proof}
We argue as in Lemma~\ref{L:far}. Again, we only present the proof for positive times. To ease notation, we write $\chi=\sqrt{\chi_{\dod}\chi_{\far}}$ and
$\tilde \chi=\tilde \chi_{\dod}\tilde \chi_{\far}$.

Using the Strichartz inequality (with $6\leq q<\infty$) followed by H\"older's inequality, we obtain
\begin{align}\label{fatty}
\Bigl\| \int_0^\infty & \nabla \tfrac{\sin(|\nabla|t)}{|\nabla|}\theta(i\nabla)\chi^2 \theta(i\nabla) F(t) \,dt\Bigr\|_{L_x^2(\R^3)}
+ \Bigl\| \int_0^\infty \cos(|\nabla|t)\theta(i\nabla)\chi^2 \theta(i\nabla) F(t) \,dt\Bigr\|_{L_x^2(\R^3)} \notag\\
&\lesssim \bigl\| \nabla \bigl[\chi^2 \theta(i\nabla) F\bigr] \bigr\|_{L_t^{\frac{2q}{q+6}}L_x^{\frac{q}{q-1}}}\notag\\
&\lesssim \bigl\|\chi^2\theta(i\nabla) \nabla F \bigr\|_{L_t^{\frac{2q}{q+6}}L_x^{\frac{q}{q-1}}}
+\| \nabla \chi\|_{L_x^3} \bigl\|\chi \, \theta(i\nabla) F \bigr\|_{L_t^{\frac{2q}{q+6}}L_x^{\frac{3q}{2q-3}}}\notag\\
&\lesssim \bigl\|\chi\theta(i\nabla) \nabla F \bigr\|_{L_t^{\frac{2q}{q+6}}L_x^{\frac{q}{q-1}}}
+\bigl\|\chi \,\theta(i\nabla) F \bigr\|_{L_t^{\frac{2q}{q+6}}L_x^{\frac{3q}{2q-3}}}.
\end{align}

Noting that the convolution kernel associated to $\theta(i\nabla)$ has compact support, we have
\begin{align*}
\bigl\|\chi \theta(i\nabla) \nabla F(t) \bigr\|_{L_x^{\frac{q}{q-1}}}
    & \lesssim \bigl\|\tilde\chi^p \nabla F(t) \bigr\|_{L_x^{\frac{q}{q-1}}}
\qtq{and}
\bigl\|\chi \theta(i\nabla) F(t) \bigr\|_{L_x^{\frac{3q}{2q-3}}}
    \lesssim \bigl\|\tilde \chi^p F(t) \bigr\|_{L_x^{\frac{3q}{2q-3}}}.
\end{align*}
Thus by Sobolev embedding, Lemma~\ref{L:Strichartz far}, Proposition~\ref{P:L^p breach}, Proposition~\ref{P:Spatial decay}
combined with \eqref{sep final}, and~\eqref{s>1},
\begin{align*}
&\text{LHS\eqref{fatty}} \\
&\lesssim \sum_{j\geq 0} \Bigl[\bigl\| \nabla u \bigr\|_{L_t^\infty L_x^{\frac{2q}{q-1}}} + \bigl\|u \bigr\|_{L_t^\infty L_x^{\frac{6q}{q-3}}} \Bigr]
    \|\tilde \chi u\|_{L_t^{\frac p2}L_x^\infty(I_j\times\R^3)}^{\frac{p(q+6)}{4q}}
    \|\tilde \chi u\|_{L_t^\infty L_x^p}^{\frac{3p}{2q}}\|\tilde \chi u\|_{L_t^\infty L_x^{\frac{3p}2}(I_j\times\R^3)}^{\frac{3p(q-4)}{4q}}\\
&\lesssim_u \bigl\| |\nabla|^{1+\frac3{2q}}u\bigr\|_{L_t^\infty L_x^2} \sum_j (T_j+R)^{-\frac{(q-4)\gamma}{2q}}\\
&\lesssim_u R^{-\frac12-4\beta},
\end{align*}
where $\gamma$ is as in Proposition~\ref{P:Spatial decay}.  To obtain the stated decay in $R$ in the last inequality,
it suffices to take $q$ sufficiently large (depending on $p$).  This proves \eqref{far piece}.
\end{proof}

This finishes the proof of the corollary.
\end{proof}

%
%
%
%

\section{The frequency-cascade solution}\label{S:no cascade}

In this section, we preclude the frequency-cascade solution described in Theorem~\ref{T:enemies}.

\begin{theorem}[Absence of frequency-cascade solutions]
There are no frequency-cascade solutions to \eqref{nlw} in the sense of Theorem~\ref{T:enemies}.
\end{theorem}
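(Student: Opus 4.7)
The plan is to contradict the conservation of energy. By Corollary~\ref{C:finite energy}, any frequency-cascade solution $u$ has finite energy, so $E_0:=E(u(t))$ is well-defined and constant in $t$; Lemma~\ref{L:u' nontriviality} forces $E_0>0$, since $E_0=0$ would yield $\ntx u\equiv 0$ in contradiction to the non-triviality. The frequency-cascade hypothesis furnishes a sequence $t_n\to\infty$ with $N(t_n)\to\infty$. It suffices to show $E(u(t_n))\to 0$, contradicting $E(u(t_n))=E_0>0$.

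The underlying principle is that, since $s_c>1$, the energy has subcritical scaling $E(u(t))=N(t)^{4/p-1}E(u^{[t]}(0))$ with $4/p-1<0$; almost periodicity concentrates the critical $\dot H^{s_c}$-mass near $|\xi|\sim N(t_n)$, and so the subcritical $\dot H^1$-mass of $u(t_n)$ should be forced to live on that same shell and thereby shrink. To execute this, I decompose each of $\|\nabla u(t_n)\|_{L^2}^2$, $\|u_t(t_n)\|_{L^2}^2$, and $\|u(t_n)\|_{L^{p+2}}^{p+2}$ by Littlewood--Paley at the scales $1$ and $N(t_n)$, into the four bands $\{|\xi|>C(\eta)N(t_n)\}$, $\{c(\eta)N(t_n)\leq |\xi|\leq C(\eta)N(t_n)\}$, $\{1<|\xi|\leq c(\eta)N(t_n)\}$, and $\{|\xi|\leq 1\}$. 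On the first three bands, the inequality $|\xi|^{2-2s_c}\leq(c(\eta)N(t_n))^{2-2s_c}$ in the outer two (valid since $s_c>1$) and $|\xi|^{2-2s_c}\leq 1$ in the third (since $|\xi|\geq 1$), combined with the uniform $\dot H^{s_c}$-bound and the almost-periodicity tails $\int_{|\xi|>C(\eta)N(t_n)}+\int_{|\xi|<c(\eta)N(t_n)}|\xi|^{2s_c}|\hat u|^2\,d\xi\leq 2\eta$, produces contributions that vanish as $n\to\infty$ then $\eta\to 0$. The $L^{p+2}$ term is treated analogously via Sobolev embedding $\dot H^{s_c}\hookrightarrow L^{3p/2}$, interpolation, and the spatial decay of Proposition~\ref{P:Spatial decay}.

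The main obstacle is the very-low-frequency piece $\{|\xi|\leq 1\}$. I use the interpolation $\|P_{\leq 1}f\|_{\dot H^1}^2\leq\|P_{\leq 1}f\|_{\dot H^{s_c}}^{2/s_c}\|P_{\leq 1}f\|_{L^2}^{2(s_c-1)/s_c}$ with $f=u(t_n)$. Once $c(\eta)N(t_n)>1$, the first factor satisfies $\|P_{\leq 1}u(t_n)\|_{\dot H^{s_c}}^2\leq\eta$ by the low-frequency almost-periodicity tail, so the crux is a uniform bound on $\|P_{\leq 1}u(t_n)\|_{L^2}$. This I obtain by combining the weighted-$L^2$ decay \eqref{ultimate} of Corollary~\ref{C:finite energy}, which provides the spatial-decay bound $\|P_{\leq 1}\ntx u(t_n)\|_{L^2(|x-x(t_n)|\geq R)}\lesssim R^{-\beta}$, with the $L^{3p/2}$-spatial decay of $u$ from Proposition~\ref{P:Spatial decay} and the Bernstein/Sobolev bound $\|P_{\leq 1}u\|_{L^\infty}\lesssim\|u\|_{L^{3p/2}}\lesssim 1$: splitting $\|P_{\leq 1}u(t_n)\|_{L^2}^2$ over $B(x(t_n),R)$ (using the $L^\infty$-bound, giving $\lesssim R^3$) and its complement (using the Schwartz decay of the convolution kernel of $P_{\leq 1}$ together with the $L^{3p/2}$-decay of $u$), then balancing $R$, yields $\|P_{\leq 1}u(t_n)\|_{L^2}\lesssim 1$ uniformly in $n$. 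Summing these estimates across all three terms of the energy gives $E(u(t_n))\to 0$, the desired contradiction.
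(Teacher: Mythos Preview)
Your overall strategy---showing that the conserved energy must vanish along a sequence $t_n$ with $N(t_n)\to\infty$---matches the paper's, and your treatment of the three high-frequency bands is fine.  The gap is in the very-low-frequency band $\{|\xi|\le 1\}$.

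Your interpolation
\[
\|P_{\leq 1}u(t_n)\|_{\dot H^1}^2 \le \|P_{\leq 1}u(t_n)\|_{\dot H^{s_c}}^{2/s_c}\,\|P_{\leq 1}u(t_n)\|_{L^2}^{2(s_c-1)/s_c}
\]
is correct as an inequality, but it is only useful if $\|P_{\leq 1}u(t_n)\|_{L^2}$ is finite and uniformly bounded.  Your argument for this---$L^\infty$ on a ball plus the $L^{3p/2}$ spatial decay of Proposition~\ref{P:Spatial decay} on the complement, then ``balancing $R$''---does not close.  The pointwise decay you can extract from Proposition~\ref{P:Spatial decay} via the Schwartz kernel is at best $|P_{\leq 1}u(x)|\lesssim |x-x(t_n)|^{-2\gamma/(3p)}$, and square-integrating this over $\{|x-x(t_n)|\ge R\}$ requires $4\gamma/(3p)>3$, i.e.\ $\gamma>9p/4$; but Proposition~\ref{P:Spatial decay} only gives $\gamma<2$.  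So the complement contribution does not decay in $R$, and there is nothing to balance against the $R^3$ growth on the ball.  Nothing in the paper places $P_{\leq 1}u$ in $L^2_x$, and there is no reason it should be there.  The analogous problem recurs for $u_t$: to make $\|P_{\leq 1}u_t\|_{L^2}$ small you would need to interpolate the (small) $\dot H^{s_c-1}$ norm against a negative-order Sobolev norm, which you do not have.

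The paper avoids this by working directly with $\ntx u$ and introducing a \emph{variable} low-frequency threshold $M\le 1$.  From the weighted bound~\eqref{ultimate} one has, by H\"older with the weight $\langle x-x(t)\rangle^{-\beta}\in L^{4/\beta}$,
\[
\|P_{\leq 1}\ntx u\|_{L_t^\infty L_x^{4/(2+\beta)}}\lesssim_u 1,
\]
which is a Lebesgue exponent \emph{below} $2$.  Bernstein then gives $\|P_{\leq M}\ntx u\|_{L_t^\infty L_x^2}\lesssim_u M^{3\beta/4}$, a genuinely small quantity once $M$ is chosen small.  Combined with your band-3 bound $\lesssim M^{1-s_c}\eta$ (now for $\{M<|\xi|\le c(\eta)N(t_n)\}$), one first takes $M$ small, then $\eta$ small depending on $M$, then $n$ large.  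This is the missing idea: exploit~\eqref{ultimate} to drop below $L^2$ for $\ntx u$ (not $u$), then use Bernstein to gain a positive power of the cutoff.
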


\begin{proof}
We argue by contradiction.  Assume there exists a solution $u:\R\times\R^3\to \R$ that is a frequency-cascade
in the sense of Theorem~\ref{T:enemies}.  We will prove this scenario is inconsistent with the conservation of energy.

Indeed, by Corollary~\ref{C:finite energy}, the energy $E(u)$ is finite.  Next, let $0<M, \eta<1$ be small
constants to be chosen later.  By almost periodicity modulo symmetries, there exists $c(\eta)$ sufficiently small so that
\begin{align}\label{small freq comp}
\|u_{\leq c(\eta) N(t)}\|_{L_t^\infty \dot H_x^{s_c}} + \|P_{\leq c(\eta) N(t)}u_t\|_{L_t^\infty \dot H_x^{s_c-1}}\leq \eta.
\end{align}

Now decompose $u = u_{\leq M} + u_{M\leq \cdot \leq c(\eta)N(t)} + u_{\geq c(\eta) N(t)}$.
To estimate the very low frequencies of $u$, we use the full strength of Corollary~\ref{C:finite energy}.  Indeed,
by H\"older's inequality and \eqref{ultimate},
\begin{align*}
\|\nabla u_{\leq M}\|_{L_t^\infty L_x^{\frac4{2+\beta}}} &+ \|P_{\leq M}u_t\|_{L_t^\infty L_x^{\frac4{2+\beta}}}\\
&\lesssim \bigl\|\langle x-x(t)\rangle^\beta \nabla u_{\leq 1}\bigr\|_{L_t^\infty L_x^2}
+ \bigl\|\langle x-x(t)\rangle^\beta P_{\leq 1}u_t\bigr\|_{L_t^\infty L_x^2}\\
&\lesssim_u 1.
\end{align*}
Thus, by Bernstein's inequality,
\begin{align}\label{small freq}
\|\nabla u_{\leq M}\|_{L_t^\infty L_x^2} + \|P_{\leq M}u_t\|_{L_t^\infty L_x^2}
&\lesssim_u M^{\frac{3\beta}4}.
\end{align}

To estimate the medium frequencies in the decomposition of $u$, we use Bernstein's inequality and \eqref{small freq comp}:
\begin{align}\label{med freq}
\|\nabla u_{M\leq \cdot\leq c(\eta) N(t)}\|_{L_t^\infty L_x^2} &+ \|P_{M\leq \cdot\leq c(\eta) N(t)}u_t\|_{L_t^\infty L_x^2}\notag\\
&\lesssim M^{1-s_c} \bigl[\|u_{\leq c(\eta) N(t)}\|_{L_t^\infty \dot H^{s_c}_x} + \|P_{\leq c(\eta) N(t)}u_t\|_{L_t^\infty \dot H^{s_c-1}_x}\bigr]\notag\\
&\lesssim M^{1-s_c}\eta.
\end{align}
We estimate the high frequencies in the decomposition of $u$ similarly:
\begin{align}\label{big freq}
\|\nabla u_{\geq c(\eta) N(t)}\|_{L_t^\infty L_x^2} &+ \|P_{\geq c(\eta) N(t)}u_t\|_{L_t^\infty L_x^2}\notag\\
&\lesssim [c(\eta)N(t)]^{1-s_c} \bigl[\|u_{\geq c(\eta) N(t)}\|_{L_t^\infty \dot H^{s_c}_x} + \|P_{\geq c(\eta) N(t)}u_t\|_{L_t^\infty \dot H^{s_c-1}_x}
\bigr]\notag\\
&\lesssim_u [c(\eta)N(t)]^{1-s_c}.
\end{align}

Putting together \eqref{small freq}, \eqref{med freq}, and \eqref{big freq}, we get
\begin{align}\label{kinetic}
\|\nabla u\|_{L_t^\infty L_x^2} + \|u_t\|_{L_t^\infty L_x^2}
\lesssim_u M^{\frac{3\beta}4} + M^{1-s_c}\eta + [c(\eta)N(t)]^{1-s_c}.
\end{align}
By Sobolev embedding and interpolating between \eqref{kinetic} and the fact that $u\in L_t^\infty \dot H^{s_c}_x$,
we also obtain
\begin{align}\label{potential}
\|u\|_{L_t^\infty L_x^{p+2}}\lesssim_u \bigl[M^{\frac{3\beta}4} + M^{1-s_c}\eta + [c(\eta)N(t)]^{1-s_c}\bigr]^{\frac2{p+2}}.
\end{align}

Combining \eqref{kinetic} and \eqref{potential}, we thus get
$$
E(u)\lesssim_u \bigl[M^{\frac{3\beta}4} + M^{1-s_c}\eta + [c(\eta)N(t)]^{1-s_c}\bigr]^2.
$$

Taking $M$ small, and then $\eta$ small depending on $M$, and then $t$ sufficiently large depending on $\eta$
(and recalling that for a frequency-cascade solution, $\limsup_{t\to \infty}N(t)=\infty$), we may deduce that the energy,
which is conserved, is smaller than any positive constant.  Thus $E(u)=0$ and so $u\equiv 0$.
This contradicts the fact that $u$ is a blowup solution.
\end{proof}

%
%
%
%

\section{The soliton-like solution}\label{S:no soliton}

In this section, we preclude the soliton-like solution described in Theorem~\ref{T:enemies}.

\begin{theorem}[Absence of solitons]
There are no soliton-like solutions to \eqref{nlw} in the sense of Theorem~\ref{T:enemies}.
\end{theorem}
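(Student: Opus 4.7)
The plan is to derive a contradiction from the Morawetz inequality \eqref{Morawetz}, by exhibiting logarithmic growth in its left-hand side while its right-hand side stays bounded. The heavy lifting has already been done in the preceding sections; the final assembly is short.

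To produce the upper bound, I would invoke Corollary~\ref{C:finite energy} to conclude that $\ntx u \in L_t^\infty L_x^2$, so that $E(u) < \infty$. Inserting this into \eqref{Morawetz} yields
\begin{align*}
\int_0^T \int_{\R^3} \frac{|u(t,x)|^{p+2}}{|x|}\, dx\, dt \lesssim_u 1
\end{align*}
uniformly in $T>0$. For the lower bound, normalize by a spatial translation so that $x(0) = 0$ and center the Morawetz at the origin. Proposition~\ref{P:subluminal} yields $\delta = \delta(u) > 0$ with $|x(t)| \leq (1-\delta) t$ whenever $t \geq 1/\delta$. Since $N(t) \equiv 1$, Corollary~\ref{C:pot conc} applied to each unit interval $[k,k+1]$ with $k\geq 1$ provides a constant $C=C(u)$ and
\begin{align*}
\int_k^{k+1} \int_{|x - x(t)| \leq C} |u(t,x)|^{p+2}\, dx\, dt \gtrsim_u 1.
\end{align*}
For $k \geq 1/\delta$, any $x$ in the inner region satisfies $|x| \leq (1-\delta)(k+1) + C$, so $|x|^{-1} \gtrsim_u (k+1)^{-1}$. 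Summing the restricted contributions over $\lceil 1/\delta \rceil \leq k \leq T-1$ produces
\begin{align*}
\int_0^T \int_{\R^3} \frac{|u(t,x)|^{p+2}}{|x|}\, dx\, dt \gtrsim_u \log T - O_u(1),
\end{align*}
which diverges as $T \to \infty$ and contradicts the upper bound. Since $u \not\equiv 0$ (as $S_\R(u) = \infty$ by Theorem~\ref{T:enemies}), this yields the theorem.

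The main obstacle is not located in this final assembly but in its two structural inputs. Without the strict sub-luminality of Proposition~\ref{P:subluminal}, the center $x(t)$ could in principle race the Morawetz weight $|x|^{-1}$ out to the light cone, defeating the $1/t$ decay that is responsible for the logarithmic divergence; and without the finite-energy conclusion of Corollary~\ref{C:finite energy}, the right-hand side of \eqref{Morawetz} would not even be known to be finite. Once both are in hand, precluding solitons reduces, as above, to an elementary partition of the time axis and comparison of Morawetz with potential-energy concentration.
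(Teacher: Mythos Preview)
Your proof is correct and follows the same strategy as the paper's: finite energy (Corollary~\ref{C:finite energy}) bounds the right-hand side of \eqref{Morawetz}, while potential-energy concentration (Corollary~\ref{C:pot conc}) together with control of $|x(t)|$ forces logarithmic growth on the left. One small point of comparison: the paper invokes only the elementary Lipschitz bound \eqref{x is Lip} (which for $N(t)\equiv 1$ gives $|x(t)|\leq t+2C_u$) rather than the full sub-luminality of Proposition~\ref{P:subluminal}. Your closing remark that strict sub-luminality is needed here to prevent $x(t)$ from racing out to the light cone is not quite accurate: even with $|x(t)|\leq t+O_u(1)$ one still has $|x|\lesssim_u 1+t$ on the concentration region, hence $|x|^{-1}\gtrsim_u (1+t)^{-1}$, and that already yields the logarithmic divergence. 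Sub-luminality is essential elsewhere in the paper---in the double-Duhamel argument underlying Corollary~\ref{C:finite energy}---but not in this final step.
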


\begin{proof}
We argue by contradiction.  Assume there exists a solution $u:\R\times\R^3\to \R$ that is soliton-like in the sense
of Theorem~\ref{T:enemies}.  We will show this scenario is inconsistent with the Morawetz inequality \eqref{Morawetz}.

By Corollary~\ref{C:finite energy}, the soliton has finite energy; hence, the right-hand side
in the Morawetz inequality is finite and so
\begin{align}\label{Mor}
\int_0^T \int_{\R^3} \frac{|u(t,x)|^{p+2}}{|x|}\, dx\, dt\lesssim E(u) \lesssim_u 1,
\end{align}
for any $T>0$.  On the other hand, by Corollary~\ref{C:pot conc} we have concentration of potential energy, that is,
there exists $C=C(u)$ so that
$$
\int_{t_0}^{t_0+1} \int_{|x-x(t)|\leq C} |u(t,x)|^{p+2}\, dx\, dt \gtrsim_u 1,
$$
for any $t_0\in\R$.  Translating space so that $x(0)=0$ and employing finite speed of propagation in the sense of
\eqref{x is Lip}, we obtain that for $T\geq 1$,
\begin{align*}
\text{LHS\eqref{Mor}}
& \geq \int_0^T \int_{|x-x(t)|\leq C} \frac{|u(t,x)|^{p+2}}{|x|}\, dx\, dt
\gtrsim_u \int_0^T \frac{dt}{1+t}
\gtrsim_u \log(T).
\end{align*}
Choosing $T$ sufficiently large depending on $u$, we derive a contradiction to \eqref{Mor}.
\end{proof}

%
%
%
%

\section{The finite-time blowup solution}\label{S:ftb}

In this section, we preclude the finite-time blowup scenario described in Theorem~\ref{T:enemies} by showing that such solutions
are inconsistent with the conservation of energy.

\begin{theorem}[Absence of finite-time blowup solutions]
There are no finite-time blowup solutions to \eqref{nlw} in the sense of Theorem~\ref{T:enemies}.
\end{theorem}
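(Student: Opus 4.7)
The plan is to argue by contradiction, exploiting that finite-time blowup forces $u(t)$ to be compactly supported at each time and then using the conservation of energy to drive $u$ to zero. By time-reversal symmetry, I may assume $T^* := \sup I < \infty$. A first step is to invoke Theorem~\ref{T:local} and the perturbation theory underlying it to conclude that $N(t) \to \infty$ as $t \to T^*$: if $N$ stayed bounded on some $[t_0, T^*)$, almost periodicity would furnish a convergent subsequence of Cauchy data from which one could solve past $T^*$, contradicting maximality of the lifespan. Combined with the Lipschitz-type estimate \eqref{x is Lip}, the divergence $N(t) \to \infty$ implies that $x(t)$ is Cauchy as $t \to T^*$, so that $x^* := \lim_{t \to T^*} x(t) \in \R^3$ exists.

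The heart of the argument is the claim that $\supp u(t) \subseteq \overline{B(x^*, T^* - t)}$ for every $t \in I$. Fix $t_0 \in I$ and $x_0$ with $\epsilon := |x_0 - x^*| - (T^* - t_0) > 0$, and let $\eta > 0$ be below the small-data threshold of Theorem~\ref{T:local}. I would choose $t_n \in (t_0, T^*)$ with $C(\eta)/N(t_n) < \epsilon/4$ and $|x(t_n) - x^*| < \epsilon/4$, which forces $|x_0 - x(t_n)| > (t_n - t_0) + C(\eta)/N(t_n)$. Smoothly truncating $(u(t_n), u_t(t_n))$ to the exterior $\{|y - x(t_n)| \geq C(\eta)/N(t_n)\}$ yields Cauchy data of $\dot H^{s_c}_x \times \dot H^{s_c-1}_x$-norm $\lesssim \eta$ by almost periodicity, and small-data global well-posedness produces a solution $v_n$ whose critical norm remains $\lesssim \eta$ for all time. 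Finite speed of propagation identifies $u$ with $v_n$ on a fixed neighborhood $U$ of $(t_0, x_0)$; consequently, $\|u(t_0)\|_{L^{3p/2}_x(U)} = \|v_n(t_0)\|_{L^{3p/2}_x(U)} \lesssim \eta$ via Sobolev embedding, and sending $\eta \to 0$ forces $u(t_0) \equiv 0$ on $U$, establishing the support claim.

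Once compact support is in hand, write $r := T^* - t$. By H\"older's inequality on $B(x^*, r)$ combined with Sobolev embedding ($\dot H^{s_c}_x \hookrightarrow L^{3p/2}_x$ and $\dot H^{s_c-1}_x \hookrightarrow L^{3p/(p+2)}_x$), I obtain $\|u(t)\|_{L^2_x} \lesssim_u r^{s_c}$ and $\|u_t(t)\|_{L^2_x} \lesssim_u r^{s_c-1}$; interpolating between $L^2_x$ and $\dot H^{s_c}_x$ then gives $\|\nabla u(t)\|_{L^2_x} \lesssim_u r^{s_c-1}$, and a parallel computation yields $\|u(t)\|_{L^{p+2}_x}^{p+2} \lesssim_u r^{(p-4)/p}$. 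Since $s_c > 1$ and $p > 4$, all three exponents are strictly positive, so the energy is finite and $E(u(t)) \to 0$ as $t \to T^*$. Conservation of energy then forces $E(u) \equiv 0$, hence $u \equiv 0$ on $I \times \R^3$, which contradicts the blowup hypothesis $S_I(u) = \infty$. The main obstacle is the compact support claim: one must coordinate $\eta$, $t_n$, and the cutoff radius $C(\eta)/N(t_n)$ so that finite speed of propagation actually transports smallness from times near $T^*$ back to a fixed exterior point, and one must dispatch the commutator between $|\nabla|^{s_c}$ and the spatial cutoff in verifying that the truncated Cauchy data is indeed $O(\eta)$ in the critical norm. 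Everything else is elementary interpolation on compact sets.
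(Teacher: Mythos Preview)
Your proof is correct and follows essentially the same route as the paper: show $N(t)\to\infty$ as $t$ approaches the blowup time, deduce $x(t)$ converges via \eqref{x is Lip}, prove $u$ (and $u_t$) is supported in the shrinking light cone, then use H\"older and Sobolev on this cone to show the energy tends to zero, contradicting conservation. The only notable difference is in how you establish the compact-support claim: the paper uses monotonicity of the local energy on annuli $\{t+\eps \leq |x| \leq \eps^{-1}-t\}$ together with \eqref{E:u' compact} to show the energy there vanishes in the limit, whereas you argue directly via the small-data theory and finite speed of propagation (exactly the mechanism used in the paper's proof of \eqref{x is Lip}). Both are standard and equivalent in spirit; the paper's energy-flux version sidesteps the cutoff/commutator issue you flag, while your version avoids invoking local energy monotonicity. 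Note that your support statement should also cover $u_t$, but your argument already yields this since the small-data solution $v_n$ controls $\ntx v_n$ in $L^{3p/(p+2)}_x$ as well.
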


\begin{proof}
We argue by contradiction.  Assume there exists a solution $u:I\times\R^3\to \R$ that is a finite-time blowup solution
in the sense of Theorem~\ref{T:enemies}.  By the time-reversal and time-translation symmetries, we may assume that
the solution blows up as $t\searrow 0=\inf I$.

First note that $N(t)\to\infty$ as $t\to0$, for otherwise a subsequential limit of the normalizations $u^{[t]}$ would
blow up instantaneously, in contradiction of the local theory.  Combining this with \eqref{x is Lip}, we deduce that
$\lim_{t\to 0} x(t)$ exists. By space-translation symmetry, we may assume that $\lim_{t\to 0} x(t)=0$.

Next we show that
\begin{align}\label{bounded support 1}
\supp u(t) \cup \supp u_t(t) \subseteq B(0, t) \quad  \text{for all} \quad t\in I,
\end{align}
where $B(0,t)$ denotes the closed ball in $\R^3$ centered at the origin of radius $t$.  Indeed, it suffices to show that
\begin{align}\label{bounded support 2}
\lim_{t\to 0}\int_{t +\eps \leq |x|\leq \eps^{-1} - t} \tfrac12 \bigl| \ntx u(t,x)\bigr|^2 + \tfrac1{p+2} |u(t,x)|^{p+2} \, dx = 0
\quad \text{for all} \quad \eps>0,
\end{align}
because the energy on the annulus $\{x:t +\eps \leq |x|\leq \eps^{-1} - t\}$ is finite and does not decrease as $t\to0$.
To obtain \eqref{bounded support 2}, fix $\eps>0$.  As the parameters $N(t)$ and $x(t)$ satisfy
$$
\lim_{t\to 0} N(t)=\infty \quad \text{and} \quad |x(t)|\leq |t| + C_u N(t)^{-1} \text{ for all } t\in I,
$$
we deduce that for all $\eta>0$ there exists $t_0=t_0(\eps, \eta)$ such that for $0<t<t_0$ we have
$$
\{x\in \R^3:\ t +\eps \leq |x|\leq \eps^{-1} - t\} \subseteq \{x\in \R^3:\ |x-x(t)|\geq C(\eta)/N(t)\},
$$
where $C(\eta)$ is as in \eqref{E:u' compact}.  Thus by H\"older's inequality and \eqref{E:u' compact},
\begin{align*}
&\int_{t +\eps \leq |x|\leq \eps^{-1} - t}   \tfrac12 \bigl| \ntx u(t,x)\bigr|^2 + \tfrac1{p+2} |u(t,x)|^{p+2} \, dx\\
&\quad\lesssim \eps^{\frac4{p}-1} \Bigl[\bigl\|\ntx u(t)\bigr\|_{L_x^{\frac{3p}{p+2}}(\{|x-x(t)|\geq C(\eta)/N(t)\})}^2
    + \|u(t)\|_{L_x^{\frac{3p}2}(\{|x-x(t)|\geq C(\eta)/N(t)\})}^{p+2}\Bigr]\\
&\quad \lesssim \eps^{\frac4{p}-1} \eta^2
\end{align*}
for all $0<t<t_0$.  As $\eta$ can be made arbitrarily small, this proves \eqref{bounded support 2} and hence \eqref{bounded support 1}.

To continue, by \eqref{bounded support 1}, H\"older's inequality, and Sobolev embedding we obtain
\begin{align*}
E(u(t))
&= \int_{B(0,t)} \Bigl(\tfrac12 |\ntx u(t,x)|^2 + \tfrac1{p+2} |u(t,x)|^{p+2}\Bigr) \, dx\\
&\lesssim \Bigl(\|\ntx u(t)\|_{L_x^{\frac{3p}{p+2}}}^2 + \|u(t)\|_{ L_x^{\frac{3p}2}}^{p+2}\Bigr)t^{1-\frac4p}\\
& \lesssim_u t^{1-\frac4p}
\end{align*}
for all $t\in I$.  In particular, the energy of the solution is finite and converges to zero as the time $t$ approaches
the blowup time $0$. Invoking the conservation of energy, we deduce that $u\equiv 0$.  This contradicts the fact that
$u$ is a blowup solution.
\end{proof}

%
%
%
%

\end{document}